\newcommand{\bo}{\mathcal O}
\newcommand\expinv[2]{\exp^{-1}_{#1} #2}
\newcommand{\ft}{\mathbf{f}_t}
\newcommand{\M}{\mathcal{M}}
\newcommand{\N}{\mathcal{N}}
\newcommand{\K}{\mathcal{K}}
\newcommand{\RegD}{{\sf{Reg}_D}}
\newcommand{\RegS}{{\sf{Reg}_S}}
\newcommand{\R}{\mathbb R}
\newcommand{\X}{\mathcal{X}}
\newcommand{\f}{\mathbf{f}}
\newcommand{\F}{\mathbf{F}}
\DeclareFontFamily{OT1}{pzc}{}
\DeclareFontShape{OT1}{pzc}{m}{it}{<-> s * [1.200] pzcmi7t}{}
\DeclareMathAlphabet{\mathpzc}{OT1}{pzc}{m}{it}
\newtheorem{corollary}{Corollary}
\newtheorem{definition}{Definition}
\newtheorem{assumption}{Assumption}
\newtheorem{lemma}{Lemma}
\newtheorem{theorem}{Theorem}
\begin{document}

\title{\bf Riemannian Optimistic Algorithms}

\author{Xi Wang\thanks{X. Wang is with Australian Center for Robotics, School of Aerospace, Mechanical and Mechatronic Engineering, The University of Sydney, NSW 2006, Australia; Academy of Mathematics and Systems Science, Chinese Academy of Sciences, Beijing 100190, P.~R.~China. (Email:  wangxi14.ucas@gmail.com)}, Deming Yuan\thanks{D. Yuan is with School of Automation, Nanjing University of Science and Technology, Jiangsu 210023, P.~R.~China (Email: dmyuan@njupt.edu.cn)}, Yiguang Hong\thanks{Y. Hong is with Shanghai Research Institute for Intelligent Autonomous Systems, Tongji University, Shanghai 201210, P.~R.~China (Email: yghong@iss.ac.cn)}, Zihao Hu\thanks{Z. Hu is with College of Computing, Georgia Institute of Technology, Atlanta, GA 30339, The United States (Email: zihaohu@gatech.edu)},  Lei Wang\thanks{L. Wang is with College of Control Science and Engineering, Zhejiang University, Zhejiang 310058, P.~R.~China (Email: lei.wangzju@zju.edu.cn)}, Guodong Shi\thanks{G. Shi is with Australian Center for Robotics, School of Aerospace, Mechanical and Mechatronic Engineering, The University of Sydney, NSW 2006, Australia (Correspondence author, Email: guodong.shi@sydney.edu.au)} }

\date{}

\maketitle

\begin{abstract}
In this paper, we consider Riemannian online convex optimization with dynamic regret. First, we propose two novel algorithms, namely the Riemannian Online Optimistic Gradient Descent (R-OOGD) and the Riemannian Adaptive Online Optimistic Gradient Descent (R-AOOGD), which combine the advantages of classical optimistic algorithms with the rich geometric properties of Riemannian manifolds. We analyze the dynamic regrets of the R-OOGD and R-AOOGD in terms of regularity of the sequence of cost functions and comparators. Next, we apply the R-OOGD to Riemannian zero-sum games, leading to the Riemannian Optimistic Gradient Descent Ascent algorithm (R-OGDA). We analyze the average iterate and best-iterate of the R-OGDA in seeking Nash equilibrium for a two-player, zero-sum, g-convex-concave games. We also prove the last-iterate convergence of the R-OGDA for g-strongly convex-strongly concave problems. Our theoretical analysis shows that all proposed algorithms achieve results in regret and convergence that match their counterparts in Euclidean spaces. Finally, we conduct several experiments to verify our theoretical findings.
\end{abstract}

\section{Introduction}
    Online optimization has become increasingly important in recent decades, as it aims to optimize a sequence of decision variables in real-time, despite uncertainty and limited feedback. The online optimization has numerous applications in fields such as machine learning, signal imaging, and control systems \citep{agmon1954relaxation,hazan2019introduction,NEURIPS2019_1dba5eed}.

    The decision variables in online learning may be defined on Riemannian manifolds. Modeling signals on Riemannian manifolds can enhance data representation capabilities \citep{liu2019hyperbolic} and reduce problem dimension \citep{li2021multi,hu2020brief}. Moreover, Riemannian optimization benefits from the property of \textit{geodesic convexity} (g-convexity) \citep{allen2018operator}, which permits conversion of Euclidean non-convex optimization problems into g-convex ones by appropriately choosing the Riemannian metric on the manifold. In this paper, we focus on the \textit{Riemannian online convex optimization} (R-OCO) problem, defined as:

\begin{align}\label{eq:ROCO}
\min_{x_t\in \K \subset \M} \ft(x_t),
\end{align}
    where a learner plays against an adversary or nature. In each round $t \in 1,2,\dots,T$, the learner selects an action $x_t$ from a geodesically convex (g-convex) subset $\K$. The adversary or nature then produces a geodesically convex (g-convex) function $\ft$ defined on $\K$ for which the learner has no prior knowledge. Finally, the learner receives feedback on $\ft$ and incurs a corresponding loss $\ft(x_t)$. The problem (\ref{eq:ROCO}) of Riemannian online convex optimization (R-OCO) is an extension of the classic online convex optimization in Euclidean spaces, with potential applications in machine learning, including robotic control, medical imaging, and neural networks \citep{lee2005online,fan2020efficient,shin2022robust}.

    In the context of the R-OCO problem, one important metric is the \textit{dynamic regret} \citep{jadbabaie15online}, which measures the difference in cumulative loss between an online optimization algorithm and a sequence of comparators $\{u_1,\dots,u_T\}$, that is:
    \begin{align*}
    \RegD(u_1,\dots,u_T) = \sum_{t=1}^T \ft(x_t) - \sum_{t=1}^T \ft(u_t).
    \end{align*}
    Compared to the well-known \textit{static regret} \citep{zinkevich2003online} 
    \begin{align*}
        \RegS(T) = \sum_{t=1}^T \ft(x_t) - \min_{x\in \M} \sum_{t=1}^T\ft(x),
    \end{align*}  the dynamic regret provides a more comprehensive evaluation of online algorithms, as it takes into account the adjustments and adaptations of the environment at each time step.

  While it has been demonstrated that online convex optimization algorithms may result in $\Omega(T)$ dynamic regret bound in the worst case, it is also possible to bound dynamic regrets related to quantities that reflect the regularity of the problem \citep{jadbabaie15online,zhang2018adaptive,zhao2020dynamic}, such as the 
    \textit{path-length}
    \begin{align*}
    P_T := \sum_{t=2}^{T} d(u_t,u_{t-1}),
    \end{align*}
    the \textit{gradient variation}
    \begin{align*}
        V_T:=\sum_{t=2}^{T}\sup_{x\in \K} \|\nabla \ft(x) - \nabla \mathbf{f}_{t-1}(x)\|^2,
    \end{align*} 
    and the \textit{comparator loss}
        \begin{align*}
        F_T:=\sum_{t=1}^{T}\ft(u_t).
    \end{align*}If the comparators $u_t$ and the cost function $\ft$ adapts slowly, the dynamic regret can be greatly reduced.

    In Euclidean online convex optimization (OCO), the Online Optimistic Gradient Descent algorithm (OOGD)\citep{jadbabaie15online,zhao2020dynamic} is a noteworthy example that certifies dynamic regret bounds based on path-length and gradient variation. The OOGD has been extensively studied and shown to achieve near-optimal dynamic regret bounds for a wide range of convex optimization problems. For instance, \cite{jadbabaie15online} presented a dynamic regret bound of $\bo(P_T\sqrt{1+V_T})$ for the vanilla OOGD, and then \cite{zhao2020dynamic} introduced a meta-expert structured OOGD that achieved a dynamic regret bound of $\bo(\sqrt{(1+P_T+V_T)(1+P_T)})$. Moreover, the OOGD algorithm has also been applied to solve zero-sum games \citep{mokhtari2020convergence,wei2021last,gorbunov2022last}, which may be used to study generative adversarial networks \citep{mertikopoulos2018optimistic}. 
    
    Despite its success in Euclidean space, the extension of OOGD to Riemannian manifolds for dynamic regret has received limited attention. Some studies have examined the static regret of the R-OCO problem. \cite{becigneul2018Riemannian} studied the static regret of Riemannian adaptive methods, which required a product manifold structure. \cite{wang2023online} investigated the static regrets of the Riemannian online gradient descent algorithm and Riemannian online bandit methods.

    With dynamic regrets, \cite{maass2020online} explored a zeroth-order dynamic regret bound for strongly g-convex and strongly g-smooth functions on Hadamard manifolds. In recent works, \cite{hu2023minimizing} proposed several algorithms for adaptive dynamic regret bounds, incorporating gradient variation bounds, small loss bounds, and best-of-world bounds. In particular, \cite{hu2023minimizing} introduced the gradient variation regret bounds via a Riemannian extragradient algorithm framework. The extragradient framework, similar to optimistic algorithms, typically requires two gradients per iteration. One gradient is computed at the current decision point $x_t$, while the other gradient is obtained by extrapolating the current gradient to a midpoint $y_t$. However, it is worth noting that optimistic algorithms can operate solely based on the strategy point $x_t$. Thus, developing a Riemannian version of OOGD is crucial for advancing the field of online optimization on manifolds.

    \paragraph{Contribution} Motivated by above, this paper aims to design optimistic online methods for Riemannian online optimization and derive dynamic regret bounds with respect to the regularity of the problem. The contribution of this paper is summarized as follows:
    \begin{itemize}
        \item We propose the Riemannian online optimistic gradient descent algorithm (R-OOGD), which uses only the gradient of the strategy point. We establish an $\bo(P_T\sqrt{1+V_T})$ dynamic regret bound for g-convex losses.
        \item We introduce the meta-expert framework in \cite{hu2023minimizing} to the R-OOGD algorithm and propose the Riemannian online adaptive optimistic gradient descent (R-AOOGD) algorithm. 
        We then establish a dynamic regret bound of $\bo(\sqrt{(1+V_T+P_T)(1+P_T)})$ of the R-AOOGD on g-convex losses. 
        \item We apply the R-OOGD algorithm to two-player zero-sum games on Riemannian manifolds and obtain the Riemannian Optimistic Gradient Descent Ascent algorithm (R-OGDA) for Nash equilibrium seeking in Riemannian zero-sum games. We prove $\bo(\frac{1}{T})$ average-iterate and $\bo(\frac{1}{\sqrt{T}})$ best-iterate convergence of the R-OGDA for g-convex-concave games. Moreover, we prove linear last-iterate convergence for g-strongly convex-strongly concave games.
    \end{itemize}
    The established regret bounds and convergence rates in our paper match the works in Euclidean space \citep{jadbabaie15online,zhao2020dynamic,mokhtari2020unified,mokhtari2020convergence}. We briefly list them in Table \ref{tab:comparison}.
\begin{table}[ht]
\centering
\begin{tabular}{|l|c|c|}
\hline
Algorithm & Problem setting & Result \\
\hline
\multirow{3}{*}{R-OOGD} & R-OCO, g-convex & $\bo(\frac{\zeta_0}{\sqrt{\sigma_0}}P_T\sqrt{1+V_T})$\\
\cline{2-3}
& \multirow{2}{*}{Euclidean, convex}& $\bo(P_T\sqrt{1+V_T}) $ \\
& &\citep{jadbabaie15online}\\
\hline
\multirow{2}{*}{R-AOOGD} & R-OCO, g-convex & $\bo(\frac{\zeta_0}{\sqrt{\sigma_0}}\sqrt{(1+V_T+P_T)(1+P_T)})$ \\ 
\cline{2-3}
& \multirow{2}{*}{Euclidean, convex}& $\bo(\sqrt{(1+V_T+P_T)(1+P_T)})$  \\
& & \citep{zhao2020dynamic}\\
\hline
\multirow{8}{*}{R-OGDA} & RZS, g-convex-concave,  & \multirow{2}{*}{$\bo(\frac{\zeta_1}{\sigma_1 T})$}\\
 & average-iterate & \\
\cline{2-3}
& Euclidean convex-concave & $\bo(\frac{1}{T})$ \\
&  average-iterate & \citep{mokhtari2020convergence} \\
\cline{2-3}
& RZS, g-convex-concave &  \multirow{2}{*}{$\bo(\frac{1}{\sqrt {\sigma_1}T})$} \\
&  best-iterate & \\
\cline{2-3}
&  Euclidean convex-concave  & $\bo(\frac{1}{\sqrt T})$ \\ 
&  best-iterate & \citep{chavdarova2021last} \\
\cline{2-3}
& RZS, g-SCSC, last-iterate & linear  \\
\cline{2-3}
& Euclidean, SCSC, &linear\\
& last-iterate& \citep{mokhtari2020unified} \\
\hline
\end{tabular}
\vspace{1em}
\caption{Comparison of Algorithms in our work and corresponding Euclidean algorithms. SCSC denotes strongly convex-strongly concave. The constants $\zeta_0,\zeta_1,\sigma_0,\sigma_1$ are related to Riemannian curvature and diameter bounds.}
\label{tab:comparison}
\end{table}

\section{Related Work}
In this section, we will provide a brief review of previous work on online convex optimization and zero-sum games in both Euclidean spaces and Riemannian manifolds.
\subsection{Online Convex Optimization}
 \paragraph{Euclidean OCO} The concept of online optimization and static regret was introduced by \cite{zinkevich2003online}. \cite{zinkevich2003online} also proposed the online gradient descent (OGD) method and constructed an $\bo(\sqrt T)$ regret bound on convex functions. Later, \cite{hazan2006logarithmic} demonstrated that the OGD method achieves an $\bo(\log T)$ regret bound on strongly convex functions. \cite{abernethy2008optimal} proved the universal lower bounds for online algorithms to be $\Omega(\sqrt T)$ and $\Omega(\log T)$ for convex and strongly convex functions respectively, which illustrated that the bounds of OGD are tight.

    Aside from regret bounds related to the time horizon $T$, several papers explored the idea of exploiting regularity in the online optimization problem to achieve better regret bounds. For example, in addressing the online optimization problem where the loss functions have a small deviation, \cite{chiang2012online} proposed an online version of the extragradient algorithm, which achieved a regret bound of $\bo(\sqrt{1+V_T})$ with respect to gradient variation $  V_T:=\sum_{t=2}^{T}\sup_{x\in \K} \|\nabla \ft(x) - \nabla \mathbf{f}_{t-1}(x)\|^2$ on convex and smooth functions. The online extragradient algorithm required constructing two gradients per iteration. To improve upon the extragradient algorithm, \cite{rakhlin2013online} proposed an online optimistic gradient descent (OOGD) method. The OOGD algorithm was also proved to achieve $\bo(\sqrt{1+V_T)}$ regret bounds by requiring only one gradient per iteration, which is suitable for one-point gradient feedback.
 
    For the dynamic regret, \cite{zinkevich2003online} established a dynamic regret bound of $\bo((1+P_T)\sqrt{T})$ for the OGD algorithm, and \cite{jadbabaie15online} derived a dynamic regret bound of $\bo(P_T\sqrt{1+V_T})$ for the OOGD algorithm. However, there is still a gap between these results and the universal lower dynamic regret bound of $\Omega(\sqrt{(1+P_T)T})$ mentioned by \cite{zhang2018adaptive}. To address this issue, \cite{zhang2018adaptive} introduced a meta-expert framework to the OGD algorithm, which led to an optimal dynamic regret bound of $\bo(\sqrt{(1+P_T)T})$. Building upon the work of \cite{zhang2018adaptive}, \cite{zhao2020dynamic} applied the meta-expert framework to the OOGD algorithm using a novel Online Optimistic Hedge technique in the meta-algorithm and obtained a gradient-variation bound $\bo(\sqrt{(1+V_T+P_T)(1+P_T)})$, a small-loss bound $\bo(\sqrt{(1+F_T+P_T)(1+P_T)})$, and a best-of-both-worlds bound $\bo(\sqrt{(1+\min(V_T,F_T)+P_T)(1+P_T)})$ in Euclidean space. In this paper, the proposed R-OOGD algorithm and the R-AOOGD algorithm extend the results of the above Euclidean online optimistic algorithms to Riemannian manifolds beyond the restriction of linear structure.

    \paragraph{Riemannian OCO}Riemannian optimization has garnered significant interest from researchers in the past few decades \citep{zhang2016first,ahn2020nesterov,becigneul2018Riemannian}. In the context of Riemannian online convex optimization, \cite{antonakopoulos2020Online} introduced a regularized method that leverages the Riemann–Lipschitz continuity condition, which specifically targeted convex functions in an ambient Euclidean space. Furthermore, \cite{becigneul2018Riemannian} provided regret analysis for Riemannian versions of the Adagrad and Adam algorithms, which relied on a product manifold structure. Later on, \cite{maass2020online} proposed a Riemannian online zeroth-order algorithm and analyzed the dynamic regret bound of $\bo(\sqrt{T}+P_T^*)$ in the setting of g-strongly convex and g-smooth functions on Hadamard manifolds. Here, $P_T^*$ represents the length of the path between the optimal solutions $P_T^*:=\sum_{t=2}^T d(x_t^*,x_{t+1}^*)$. \cite{wang2023online} proposed Riemannian online gradient methods with a sublinear static regret of $\bo(\sqrt{T})$ in the full information setting, $\bo(T^{2/3})$ in the one-point bandit information setting, and $\bo(\sqrt{T})$ in the two-point bandit information setting for g-convex functions. In this paper, the dynamic regret bound of the proposed R-OOGD algorithm holds for geodesically convex functions on general manifolds and achieves a better static regret $\bo(\sqrt{1+V_T})$ than the above mentioned results. 

    A recent breakthrough in understanding dynamic regret for Riemannian OCO was made by \cite{hu2023minimizing}. \cite{hu2023minimizing} first established a lower bound $\bo(\sqrt{(1+P_T)T})$ for minimax dynamic regrets for the Riemannian OCO. Then, \cite{hu2023minimizing} utilized the Fr\'echet mean and proposed a Riemannian meta-expert structure into the R-OGD, effectively achieving the universal lower bound for minimax dynamic regret. Additionally, \cite{hu2023minimizing} enhanced the meta-algorithm by incorporating Optimistic Hedge methods into Riemannian manifolds, which lead to a gradient-variation regret bound $\bo(\sqrt{(1+V_T+P_T)(1+P_T)})$, a small-loss regret bound  $\bo(\sqrt{(1+F_T+P_T)(1+P_T)})$, and a best-of-both-worlds regret bound $\bo(\sqrt{(1+\min(V_T,F_T)+P_T)(1+P_T)} + \min(V_T,F_T) \log T)$. Moreover, \cite{hu2023minimizing} also extended the regret bounds to the constrained setting using improper learning techniques. In order to obtain gradient-related dynamic regret bounds, \cite{hu2023minimizing} proposed the RADRv algorithm, which utilizes an extragradient-type algorithm (referred to as R-OCEG) as expert algorithm. The main insight of the R-OCEG algorithm is the extrapolation step, where the gradient of the current strategy point is used to extrapolate and obtain a midpoint. The current strategy point is then updated based on the gradient of the extrapolated point. As a result, the R-OCEG algorithm requires two gradient information in each iteration. In contrast, our R-OOGD algorithm does not rely on gradient extrapolation. Instead, the R-OOGD combines the gradient at the current decision point with parallel transported gradients from past decision points to perform update. The approach that parallel transporting past gradients allows our R-OOGD algorithm to achieve the same dynamic regret bound of $\mathcal{O}(P_T\sqrt{1+V_T})$ as the R-OCEG, while requiring only one round of gradient computations each turn. Furthermore, compared to the RADRv, our R-AOOGD algorithm, which employs the R-OOGD as the expert algorithm, achieves the same dynamic regret bound of $\mathcal{O}(\sqrt{(1+V_T+P_T)(1+P_T)})$ while utilizing half of the gradient information.

    \subsection{Optimistic algorithm in zero-sum games}
      \paragraph{ODGA in Euclidean zero-sum games} The Extragradient (EG) and Optimistic Gradient Descent Ascent (OGDA) methods have been extensively researched in the field of finding Nash equilibrium of Euclidean zero-sum games since the work of \cite{korpelevich1976extragradient}. In the unconstrained convex-concave setting, both the EG and the OGDA methods exhibited $\bo(1/T)$ average convergence rates, e.g., \citep[e.g.,][]{mokhtari2020convergence}. \cite{chavdarova2021last} and \cite{gorbunov2022last} proved $\bo(1/\sqrt{T})$ best-iterate convergence rates. In the constrained convex-concave setting, a more recent study by \cite{cai2022tight} demonstrated an $\bo(1/\sqrt{T})$ last-iterate convergence rate of the OGDA. For the strongly convex-strongly concave setting, \cite{mokhtari2020unified} demonstrated linear last-iterate convergence for both the OGDA and EG methods. Furthermore, \cite{wei2021last,gorbunov2022last} extended this linear convergence rate to the constrained setting. In this paper, we propose a Riemannian extension of the OGDA method which preserves the average iterate and best-iterate convergence rate for g-convex-concave games, and linear last-iterate for g-strongly convex-strongly concave games.

     \paragraph{Riemannian zero-sum games} Algorithms for coomputing the Nash equilibria (NE) in Riemannian zero-sum (RZS) games on Riemannian manifolds have been developed in the work of \cite{li2009monotone,wang2010monotone,ferreira2005singularities}. \cite{huang2020gradient} introduced a Riemannian gradient descent ascent algorithm for RZS games where the second variable $y$ lied in a Euclidean space. \cite{zhang2022minimax} presented the Riemannian Corrected Extragradient algorithm (RCEG), which computes two gradients in one iteration and achieves $\bo(\frac{1}{T})$ average-iteration convergence. Furthermore, for non-smooth functions, \cite{NEURIPS2022_2ad9a1a6} presented $\bo(\frac{1}{T})$ average-iterate convergence in the g-convex-concave setting and linear last-iterate convergence in the g-strongly-convex strongly-concave setting using Riemannian gradient descent ascent. \cite{NEURIPS2022_2ad9a1a6} also constructed linear last-iterate convergence of the RCEG in the strongly g-convex setting. \cite{han2022riemannian} introduced Riemannian Hamiltonian methods (RHM) that use the second-order Riemannian Hessian operator and established linear last-iterate convergence when the gradient norm of the payoff function satisfies the Riemannian Polyak–Łojasiewicz (PL) condition. In contrast, our ROGDA algorithm uses first-order information only once in each iteration and achieves the same average-iterate and last-iterate convergence. Moreover, our ROGDA algorithm shows the first best-iterate convergence result for g-convex-concave setting.

\section{Preliminaries}
\paragraph{Riemannian geometry}
A \textit{Riemannian manifold} $(\M,g)$ is a manifold $\M$ with a point-varying Riemannian metric $g$. The Riemannian metric $g$ induces an inner product $\langle u,v\rangle_x = g_x(u,v)$ on every tangent space $T_x\M$. Via the inner product, notions of geometry can be brought onto Riemannian manifolds. For example, the norm of $u\in T_x\M$ is defined as $\|u\| = \sqrt{\langle u,u \rangle_x}$, the angle between $u,v \in T_x\M$ is $\arccos \frac{\langle u,v \rangle_x}{\|u\|\|v\|}$, and the length of a curve $\gamma: [0,1] \to \M$ is defined as $\int_0^1 \|\dot\gamma(t)\| dt$.

A Riemannian manifold $\M$ also enjoys a metric space structure with distance $d(x,y)$, which is the minimum of the lengths of the curves connecting $x$ and $y$. A curve $\gamma$ is called a \textit{geodesic} if it locally reaches the minimum length. An \textit{exponential map} $\exp_x$ acts as a vector addition on Riemannian manifolds, which maps a tangent vector $v\in T_x\M$ to the endpoint $\gamma(1)$ of a geodesic $\gamma$ with the initial tangent vector $v$. \textit{Parallel transport} $\Gamma_\gamma$ along the curve $\gamma$ translates vectors from one tangent space to another while preserving the inner product, i.e., $\langle u,v \rangle = \langle \Gamma_\gamma u, \Gamma_\gamma v\rangle$. In particular, we denote $\Gamma_x^y$ as the parallel transport along the geodesic between $x$ and $y$. The parallel transport determines the covariant derivative of the vector field $X$ along the vector field $Y$, which is defined as $\nabla_X Y(x) = \lim_{t\to0} \frac{1}{t} \big(\Gamma_\gamma X(\gamma(t)) -X(x)\big)$, where $\dot\gamma(0) = Y(x)$.  

One of the most important notions in Riemannian geometry is the curvature tensor, defined as \[R(X,Y,W,Z):= \langle \nabla_X\nabla_Y Z- \nabla_Y\nabla_X Z -\nabla_{[X,Y]} Z, W\rangle. \] The \textit{sectional curvature} is defined as $\frac{R(X,Y,X,Y)}{|X|^2|Y|^2-\langle X,Y\rangle^2}$ and characterizes the non-flatness of a $2$-dimensional Riemannian submanifold. On manifolds with non-positive sectional curvature, geodesics at one point $x$ spread away from each other so that the inverse exponential map $\expinv{x}{}$ can be defined globally, while on manifolds with positive sectional curvature $K$, geodesics at one point gather with each other, making the inverse exponential map well-defined only in a neighborhood of $x$ with diameter less than $\frac{\pi}{\sqrt{K}}$. On the domain where the inverse exponential map is well-defined, we can write the distance function as $d(x,y) = \|\expinv{x}{y}\|$.

\paragraph{Function classes} We introduce some function classes on Riemannian manifolds for the further analysis. First, we introduce the concept of geodesic convexity on Riemannian manifolds. A function $ \f$ is considered to be \textit{geodesically convex} (or g-convex) on $\K$, if for any $x$ and $y$ belonging to $\K$, it satisfies 
\begin{align*}
\f(y) \ge \f(x) + \langle \nabla \f (x), \expinv{x}{y} \rangle.
\end{align*}
A function $\f$ is said to be $\mu$-strongly geodesically convex (or $\mu$-strongly g-convex) on $\K$, if for any $x$ and $y$ belonging to the manifold $\mathcal{M}$, the following inequality holds
\begin{align*}
\f(y) \ge \f(x) + \langle \nabla \f (x), \expinv{x}{y} \rangle + \frac{\mu}{2}d^2(x,y).
\end{align*}
Strong g-convexity also implies that
\begin{align*}
\langle -\nabla \f(x), \expinv{x}{x^*} \rangle \ge \frac{\mu}{2} d^2(x,x^*),\quad \forall x\in \M,
\end{align*}
where $x^*$ is the global minimizer. Furthermore, a function $\f:\K \to \R$ is called \textit{geodesically concave} (g-concave) if $-\f$ is g-convex and a function $\f:\M \to \R$ is called $\mu$-\textit{strongly geodesically concave} ($\mu$-strongly g-concave) if $-\f$ is $\mu$-strongly g-convex.

We now define Lipschitz functions and smooth functions on Riemannian manifolds. We define a function $\f: \M \to \R$ as \textit{geodesically $G$-Lipschitz} (or g-$G$-Lipschitz) if there exists a constant $G>0$ such that, for any $x,x^\prime \in \M$, the inequality $|\f(x) - \f(x^\prime)| \le G \cdot d(x,x^\prime)$ holds. In the case of differentiable functions, this condition is equivalent to $\|\nabla \f(x)\| \le G$ for all $x \in \M$. Similarly, a function $\f: \M \to \R$ is referred to as \textit{geodesically $L$-smooth} if the gradient of $f$ satisfies the g-$L$-Lipschitz property, meaning that for any $x,x^\prime \in \M$, we have $\|\nabla \f(x) -\Gamma_{x^\prime}^x \nabla \f(x^\prime)\| \le L \cdot d(x,x^\prime)$.

We now shift our focus to bivariate functions within the context of Riemannian zero-sum games. We call a bivariate function $\f(x,y):\M \times \N \to \R$ \textit{g-convex-concave} (or \textit{$\mu$-g-strongly convex-strongly concave}), if for every $(x,y)\in\M \times \N$, $\f(\cdot,y): \M \to \R$ is g-convex (or $\mu$-strongly g-convex) and $\f(x,\cdot): \N \to \R$ is g-concave (or $\mu$-strongly g-concave).

\section{Riemannian Online Optimization with Dynamic Regret}\label{sec:oco}

In this section, we first introduce the Riemannian online optimistic gradient descent algorithm (R-OOGD) and the Riemannian adaptive online optimistic gradient descent algorithm (R-AOOGD), which aims to improve the dynamic regret bound by averaging $N$ R-OOGD algorithms with different step sizes. Then, we analyze the dynamic regret bounds of the R-OOGD and the R-AOOGD under g-convex functions.

\subsection{Riemannian Online Optimistic Gradient Descent Method}
The proposed R-OOGD algorithm is described in Algorithm \ref{alg:R-OOGD}. At each iteration $t$, the algorithm collects the gradient $\nabla \ft(x_t)$ and combines it with additional momentum 
\begin{align*}
    \nabla \ft(x_t) - \Gamma_{x_t-1}^{x_t} \nabla \mathbf{f}_{t-1}(x_{t-1}).
\end{align*} This combined gradient is then used in a gradient descent step via the exponential map $\exp_{x_t}$. The R-OOGD algorithm extends the Euclidean optimistic framework \citep{mokhtari2020unified} to Riemannian manifolds.

\begin{algorithm}[H]
    \centering
    \caption{Riemannian Optimistic Gradient Descent Algorithm (R-OOGD)}
    \begin{algorithmic}
    \Require Manifold $\M$, step size $\eta$
    \State Initialize $x_{-1} = x_0 = x_1\in \M$.
    \For {$t$ = $1$ to $T-1$}
        \State Play $x_t$ and receive $\nabla \ft(x_t)$.
        \State Update $x_{t+1} = \exp_{x_t}(-2\eta \nabla \ft(x_t) + \eta\Gamma^{x_t}_{x_{t-1}} \nabla \mathbf{f}_{t-1}(x_{t-1}) )$
    \EndFor
    \Ensure Sequence $(x_t)_{t=1}^T$.
    \end{algorithmic}\label{alg:R-OOGD}
\end{algorithm}

Next, inspired by \cite{hu2023minimizing}, we introduce a meta-expert framework to the R-OOGD. We propose the Riemannian adaptive online optimistic gradient descent algorithm (R-AOOGD) in Algorithms \ref{alg: R-AOOGDmeta} and \ref{alg:R-AOOGDexp}. 

\begin{algorithm}[ht]
\caption{R-AOOGD: Meta Algorithm}\label{alg: R-AOOGDmeta}
\begin{algorithmic}[1]
\Require Manifold $\M$, learning rate $\beta$, step size pool $\mathcal{H} = {\eta_i;i=1,2,\dots,N}$ and parameters $K,\alpha$.
\State Initialize $x_0 \in \M$. Set initial weights $w_{0,1}=w_{0,2}=\dots=w_{0,N} = 1/N$.
\For{$t = 1$ to $T$}
\State Receive $x_{i,t}$ from $N$ expert algorithms with step size $\eta_i$.
\State Set $\bar x_t = \arg\min_{x} w_{t-1,i} d^2(x,x_{t,i})$
\State Update $w_{t,i} \propto e^{\big(-\beta(\sum_{j=1}^{t-1}l_{i,j} + m_{i,t})\big)}$ by
\begin{align*}
    \begin{cases}
        l_{i,t} = \langle  \nabla \ft(x_t) , \expinv{x_t}{x_{i,t}} \rangle\\
        m_{i,t} = \langle \nabla \ft(\bar x_t) , \expinv{\bar x_t}{x_{i,t}} \rangle\\
    \end{cases}
\end{align*}
\State Set $x_t = \arg\min_{x} w_{t,i} d^2(x,x_{t,i})$.
\EndFor
\State \Return $\{x_t\}_{t=1}^{T}$.
\end{algorithmic}
\end{algorithm}

\begin{algorithm}[ht]
\centering
\caption{R-AOOGD: Expert Algorithm}\label{alg:R-AOOGDexp}
\begin{algorithmic}
\Require Manifold $\M$, feasible set $\K$ and step size $\eta_i$ from the pool $\mathcal{H}$.
\State Initialize $x_{i,-1} = x_{i,0} = x_{i,1}\in \K$.
\For {$t$ = $1$ to $T-1$}
\State Send $x_{i,t}$ to the meta algorithm.
\State Update $x_{i,t+1} = \exp_{x_{i,t}}(-2\eta_i \nabla \ft(x_{i,t}) + \eta_i\Gamma^{x_{i,t}}_{x{i,t-1}} \nabla \mathbf{f}_{t-1}(x_{i,t-1}) )$.
\EndFor
\end{algorithmic}
\end{algorithm}

\subsection{Dynamic Regret Analysis}
In order to analyze the regret bounds of the R-OOGD and the R-AOOGD, we impose some assumptions, which are standard in the literature of online learning and Riemannian optimization \citep{antonakopoulos2020Online,mokhtari2020convergence,mokhtari2020unified,ahn2020nesterov,alimisis2021momentum}.

\begin{assumption}\label{asm:start1}
    The function $\ft$ is g-convex, g-$G$-Lipschitz, and g-$L$-smooth over the set $\K \subset \M$.
\end{assumption}

The following two assumptions focus on the geometry of the manifolds $\M$.
\begin{assumption}
    All sectional curvatures of $\M$ are bounded below by a constant $\kappa$ and bounded above by a constant $K$.
\end{assumption}
\begin{assumption}\label{asm:bound}
    The diameter of the feasible set $\K$ is bounded by $D_0$. If $K>0$, the diameter $D_0$ is less than $\frac{\pi}{2\sqrt{K}}$.
\end{assumption}

Assuming boundedness of the feasible set is a fundamental setting in online optimization algorithms \citep{zinkevich2003online,abernethy2008optimal,zhao2020dynamic}, while the additional constraint $D_0 \le \frac{\pi}{2\sqrt{K}}$ is also a common condition adopted in the literature of Riemannian optimization on positively curved manifolds \citep{zhang2016first, alimisis2021momentum, zhang2022minimax}. Assumption \ref{asm:bound} ensures that there are no conjugate points on $\K$ according to the conjugate point theorem \citep{lee2018introduction}, guaranteeing that the inverse exponential map $\exp^{-1}_{x}(\cdot)$ can be defined throughout $\K$. Additionally, the Hessian comparison theorem \citep{lee2018introduction} indicates that when the diameter of $\K$ is greater than $\frac{\pi}{\sqrt{K}}$, the subset $\K$ may be  \emph{``infinitely curved,''} meaning that the Hessian of the distance function $d(x,\cdot)$ may blow up. As in Riemannian optimization, we usually bound the loss by the g-convexity, i.e.,
\begin{align*}
    \ft(x_t)-\ft(u_t) &\le \langle \nabla \mathbf - \f_t(x_t) , \expinv{x_t}{u_t} \rangle = \langle \nabla \mathbf f_t(x_t) , \nabla_{x_t}(\frac{1}{2}d^2(x_t,u_t)) \rangle,
\end{align*} and then use the Hessian of the distance function $d(x,u_t)$ for further analysis. 

We also note that Assumption \ref{asm:bound} does not necessarily affect the applicability of the proposed algorithms in practical problems. Our experiments (see Subsection \ref{exp: ogr}) demonstrate that a feasible set $\K$ with a much larger diameter than $\frac{\pi}{2\sqrt{K}}$ (even the whole manifold $\M$) does not significantly affect the performance of our proposed algorithms.
 
We impose the following assumption on invariance of the set $\K$ during the execution of Algorithm \ref{alg:R-OOGD}. The same assumption has also been used in the work by \cite{ahn2020nesterov,alimisis2021momentum,zhang2022minimax}. In addition, and we do not observe the assumption to be violated in our experiments. 
    
\begin{assumption}\label{asm:end1}
    All iterations of Algorithms \ref{alg:R-OOGD} lie in the set $\K$.
\end{assumption}

Now we begin to analyze regret bounds. Our analysis heavily relies on comparison inequalities \citep{zhang2016first,alimisis2021momentum}, which enable us to quantify the distortion by nonlinear structure with respect to the curvature bound $\kappa$, $K$ and domain diameter $D_0$. Specially, we can bound the minimum and maximum distortion rates by two parameters 
    \begin{align*}
            \sigma(K,D) = 
            \begin{cases}
                \frac{\sqrt{K}D}{\tan(\sqrt{K}D)} & K>0; \\
                1 &  K\le 0,
            \end{cases}
            \quad \text{and} \quad \zeta(\kappa,D) = 
            \begin{cases}
                \frac{\sqrt{-\kappa}D}{\tanh(\sqrt{-\kappa}D)} & \kappa<0; \\
                1 &  \kappa \ge 0.
            \end{cases}
    \end{align*}
After that, we can establish regret bounds for the R-OOGD algorithm.
\subsubsection{Dynamic Regret for R-OOGD}
\begin{theorem}\label{thm: reg-c}
    Let $\sigma_0 = \sigma(K,D_0)$ and $\zeta_0 = \zeta(\kappa,D_0)$. Suppose that Assumptions \ref{asm:start1}-\ref{asm:end1} hold. Then, for the sequence $\{x_t\}_{t=1}^T$ generated by Algorithm \ref{alg:R-OOGD} with step size $\eta \le \frac{\sigma_0}{4\zeta_0 L}$, the following inequality holds for an arbitrary sequence $\{u_t\}_{t=1}^T$:
\begin{align*}
\RegD(u_1,\dots,u_T) \le \frac{D_0^2+2D_0P_T}{\eta} + \eta \frac{4\zeta_0^2}{\sigma_0} (G^2+V_T),
\end{align*}
where $P_T = \sum_{t=2}^{T} d(u_t,u_{t-1})$ is the path-length,  and $V_T= \sum_{t=2}^{T}\sup_{x\in \K} \|\nabla \ft(x) - \nabla \mathbf{f}_{t-1}(x)\|^2$ is the gradient variation. If $V_T$ is known beforehand, the dynamic regret bound can be improved to $\bo(P_T\sqrt{1+V_T})$.
\end{theorem}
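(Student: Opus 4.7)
The plan is to adapt the Euclidean OOGD telescoping argument to the Riemannian setting by substituting the parallelogram identity with curvature-aware comparison inequalities that inject the distortion constants $\zeta_0$ and $\sigma_0$. I would first apply the lower-curvature comparison inequality to the update $x_{t+1} = \exp_{x_t}(\eta v_t)$ with $v_t := -2\nabla \ft(x_t) + \Gamma^{x_t}_{x_{t-1}} \nabla \f_{t-1}(x_{t-1})$, yielding for any $u \in \K$
\begin{equation*}
d^2(x_{t+1},u) \le d^2(x_t,u) + \zeta_0 \eta^2 \|v_t\|^2 - 2\eta \langle v_t, \expinv{x_t}{u}\rangle.
\end{equation*}
Instantiating $u = u_t$, expanding $v_t$, and using g-convexity $\ft(x_t) - \ft(u_t) \le -\langle \nabla \ft(x_t), \expinv{x_t}{u_t}\rangle$ converts one inner product into the per-round regret while leaving a residual optimistic correction of the form $\eta\langle \nabla \ft(x_t) - \Gamma^{x_t}_{x_{t-1}} \nabla \f_{t-1}(x_{t-1}), \expinv{x_t}{u_t}\rangle$, which is the crux of the analysis.

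To handle this residual, I would bound the gradient difference via $L$-smoothness and the isometry of parallel transport,
\begin{equation*}
\|\nabla \ft(x_t) - \Gamma^{x_t}_{x_{t-1}} \nabla \f_{t-1}(x_{t-1})\| \le L\,d(x_t,x_{t-1}) + \|\nabla \ft(x_{t-1}) - \nabla \f_{t-1}(x_{t-1})\|,
\end{equation*}
then apply Cauchy--Schwarz and Young's inequality, and absorb the resulting $L^2 d^2(x_t,x_{t-1})$ piece using a compensating negative quadratic $-\sigma_0 \eta \|v_{t-1}\|^2$ produced by the upper-curvature comparison inequality applied to the previous update together with the identity $d(x_t,x_{t-1}) = \eta\|v_{t-1}\|$. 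The step-size condition $\eta \le \sigma_0/(4\zeta_0 L)$ is chosen precisely so that this cross-time absorption is non-positive after summation, leaving a clean per-round contribution proportional to $\|\nabla \ft(x_{t-1}) - \nabla \f_{t-1}(x_{t-1})\|^2$ that telescopes into the gradient variation $V_T$.

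Summing the per-round inequalities from $t=1$ to $T$ telescopes the distance drops $\sum_t[d^2(x_t,u_t) - d^2(x_{t+1},u_t)]$, while the time-varying comparator is handled by
\begin{equation*}
d^2(x_{t+1},u_t) - d^2(x_{t+1},u_{t+1}) \le \bigl(d(x_{t+1},u_t)+d(x_{t+1},u_{t+1})\bigr)d(u_t,u_{t+1}) \le 2 D_0\,d(u_t,u_{t+1}),
\end{equation*}
which by Assumption \ref{asm:bound} turns the telescope remainder into $D_0^2 + 2 D_0 P_T$. Splitting $\|v_t\|^2 \le 2\|\nabla \ft(x_t)\|^2 + 2\|\nabla \ft(x_t) - \Gamma^{x_t}_{x_{t-1}} \nabla \f_{t-1}(x_{t-1})\|^2$ produces the $G^2$ and $V_T$ pieces, assembling everything into the bound $(D_0^2+2D_0P_T)/\eta + \eta(4\zeta_0^2/\sigma_0)(G^2+V_T)$; optimizing $\eta \asymp \sqrt{(1+P_T)/(1+V_T)}$ when $V_T$ is known then balances the two summands and yields the $\mathcal{O}(P_T\sqrt{1+V_T})$ rate. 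The main obstacle is the second step above: because the two gradients in the optimistic correction live in different tangent spaces, the Euclidean algebraic cancellation used in \cite{jadbabaie15online} is unavailable and must be replaced by a smoothness-plus-curvature estimate, which is precisely why $L$, $\zeta_0$ and $\sigma_0$ become coupled in the step-size condition.
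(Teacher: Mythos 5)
Your global architecture (per-round inequality from a curvature comparison, smoothness split, absorption via the step size, comparator drift giving $P_T$) points in the right direction, but the crux step fails in two concrete places. First, writing $\delta_t := \nabla \ft(x_t)-\Gamma^{x_t}_{x_{t-1}}\nabla\f_{t-1}(x_{t-1})$, your plan to handle the residual $\eta\langle \delta_t, \expinv{x_t}{u_t}\rangle$ by Cauchy--Schwarz and Young cannot work: the partner vector carries no per-round smallness ($\|\expinv{x_t}{u_t}\|$ is only bounded by the diameter $D_0$), so any Young split leaves a per-round additive constant whose sum is $\Theta(\eta T)$. Likewise your final split $\|v_t\|^2 \le 2\|\nabla\ft(x_t)\|^2 + 2\|\delta_t\|^2$ feeds $2\zeta_0\eta^2G^2$ into \emph{every} round, i.e.\ $\Theta(\eta^2 G^2 T)$ in total, whereas the theorem's bound contains $\eta G^2$ with no factor of $T$; with the optimized step size these spurious terms degrade the guarantee to the vanilla R-OGD rate $\bo(\sqrt{(1+P_T)T})$ rather than $\bo(P_T\sqrt{1+V_T})$. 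Second, the negative quadratic you intend to harvest never materializes: your $\zeta_0$-comparison at $x_t$ yields only the \emph{positive} term $+\zeta_0\eta^2\|v_t\|^2$, while the $\sigma_0$-comparison (Lemma \ref{lemma:sigma}) applied at $x_t$ gives a \emph{lower} bound on $d^2(x_{t+1},u)$, which has the wrong sign to insert into your recursion — so there is no $-\sigma_0\eta\|v_{t-1}\|^2$ available to absorb the $L^2 d^2(x_t,x_{t-1})$ piece.

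The missing idea — the defining feature of optimistic analyses, Euclidean or Riemannian — is that the residual must be \emph{telescoped}, not bounded per round. The paper writes g-convexity at $x_{t+1}$ (not $x_t$) and applies Lemma \ref{lemma:sigma} at the vertex $x_{t+1}$ of the triangle $\triangle x_{t+1}u_{t+1}x_t$, which is precisely what produces a genuine negative quadratic $-\frac{\sigma_0}{2\eta}d^2(x_{t+1},x_t)$; it then splits the optimistic inner product by adding and subtracting $\Gamma_{x_t}^{x_{t+1}}\expinv{x_t}{u_{t+1}}$ and $\expinv{x_t}{u_t}$, pricing only: (a) a transport-distortion piece bounded by $\zeta_0\, d(x_{t+1},x_t)\|\delta_t\|$ via Corollary \ref{lemma:tau}, absorbed into the negative quadratic after the smoothness split $\|\delta_t\|^2 \le 2v_t + 2L^2d^2(x_t,x_{t-1})$ — this is exactly where $\eta \le \sigma_0/(4\zeta_0 L)$ enters; (b) a comparator-motion piece bounded by $\frac{2}{\sqrt{\sigma_0}}\,d(u_t,u_{t+1})\,\|\delta_t\|$ via Lemma \ref{lemma:lipexp}, which yields the $D_0P_T/\eta$ contribution; and (c) the untouched term $-\langle \expinv{x_t}{u_t}, \delta_t\rangle$, which cancels across rounds against the matching term $+\langle \expinv{x_{t+1}}{u_{t+1}}, -\nabla\f_{t+1}(x_{t+1})+\Gamma_{x_t}^{x_{t+1}}\nabla\ft(x_t)\rangle$ arising from the next round's surrogate, leaving only a boundary term of size $2D_0G$ — the sole source of the $G^2$ in the final bound. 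Without this telescoping decomposition (or an equivalent two-sequence analysis), your outline cannot reach the stated gradient-variation bound.
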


\begin{corollary}\label{cor:reg-c}
    Suppose that Assumptions \ref{asm:start1}-\ref{asm:end1} hold. Algorithm \ref{alg:R-OOGD} incurs the static regret with the optimal step size $\eta^* = \min( \sqrt{\frac{D_0^2\sigma_0}{4\zeta_0^2(G^2+V_T)}}, \frac{\sigma_0}{4\zeta_0 L})$, 
    \begin{align*}
        \RegS(T) \le \bo\big(\zeta_0\sqrt{\frac{1}{\sigma_0}(1+V_T) }  \big).
    \end{align*}
\end{corollary}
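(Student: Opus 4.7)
The plan is to obtain the static regret bound as an immediate specialization of Theorem~\ref{thm: reg-c} to a constant comparator sequence. By definition, $\RegS(T)$ is the dynamic regret against $u_t \equiv u^*$, where $u^* \in \arg\min_{x \in \K} \sum_{t=1}^T \ft(x)$. Since this sequence is constant, its path-length is identically zero, $P_T = 0$, and the dynamic regret bound in Theorem~\ref{thm: reg-c} collapses to
\begin{align*}
\RegS(T) \le \frac{D_0^2}{\eta} + \eta \cdot \frac{4\zeta_0^2}{\sigma_0}(G^2 + V_T).
\end{align*}

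What remains is to tune $\eta$ subject to the admissibility constraint $\eta \le \sigma_0/(4\zeta_0 L)$ imposed by Theorem~\ref{thm: reg-c}. The right-hand side is of the classical AM-GM form $A/\eta + B\eta$ with $A = D_0^2$ and $B = 4\zeta_0^2(G^2+V_T)/\sigma_0$, whose unconstrained minimizer is $\eta_{\mathrm{uc}} = \sqrt{A/B} = \sqrt{D_0^2\sigma_0/(4\zeta_0^2(G^2+V_T))}$, attaining value $2\sqrt{AB} = 4 D_0\zeta_0\sqrt{(G^2+V_T)/\sigma_0}$. Taking $\eta^* = \min(\eta_{\mathrm{uc}},\sigma_0/(4\zeta_0 L))$ matches the prescription in the corollary.

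I then need to verify that both branches of the $\min$ yield the claimed rate. In the regime $\eta_{\mathrm{uc}} \le \sigma_0/(4\zeta_0 L)$, i.e.\ $V_T$ is sufficiently large, the unconstrained minimizer is admissible and the resulting bound is exactly $4D_0\zeta_0\sqrt{(G^2+V_T)/\sigma_0} = \bo\bigl(\zeta_0\sqrt{(1+V_T)/\sigma_0}\bigr)$, where the constant $G^2$ is absorbed into the big-$\bo$ notation. In the opposing regime $V_T$ is bounded by a problem constant and $\eta^* = \sigma_0/(4\zeta_0 L)$; substituting this value gives a bound of order $\zeta_0 D_0^2 L/\sigma_0 + \zeta_0(G^2+V_T)/L = \bo(1)$, which is trivially dominated by $\bo\bigl(\zeta_0\sqrt{(1+V_T)/\sigma_0}\bigr)$.

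The substantive analytical work is already contained in Theorem~\ref{thm: reg-c}, so I do not anticipate a genuine obstacle. The only care required is to correctly stitch together the two regimes of the clipped step size and to absorb $G$ into the implicit constants so that the two cases unify under a single $\bo\bigl(\zeta_0\sqrt{(1+V_T)/\sigma_0}\bigr)$ expression; this is a routine AM-GM optimization argument.
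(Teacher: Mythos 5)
Your proposal is correct and takes essentially the same route as the paper, which derives the corollary directly from Theorem~\ref{thm: reg-c} by setting a constant comparator $u_t \equiv u^*$ (so $P_T=0$) and minimizing $\frac{D_0^2}{\eta} + \eta\,\frac{4\zeta_0^2}{\sigma_0}(G^2+V_T)$ over step sizes clipped at $\frac{\sigma_0}{4\zeta_0 L}$. Your two-regime treatment of the clipped minimizer, with the small-$V_T$ case absorbed as an additive constant into the $\bo(\cdot)$ notation, is exactly the intended argument.
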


The proofs of Theorem \ref{thm: reg-c} and Corollary \ref{cor:reg-c} are shown in Appendix \ref{app:reg}. In Theorem \ref{thm: reg-c} and Corollary \ref{cor:reg-c}, we provide an $\bo(P_T\sqrt{1+V_T})$ dynamic regret bound and an $\bo(\sqrt{1+V_T})$ static regret bound for the R-OOGD, respectively. These bounds recover the corresponding work on online optimization in Euclidean space \citep{jadbabaie15online,zhang2018adaptive,zhao2020dynamic}. Compared to the 
extragradient-structured R-OCEG algorithm by \cite{hu2023minimizing} in the unconstrained setting, our method only requires one gradient information per iteration, making it less computationally demanding and more applicable to the one-point gradient feedback online learning model.
\subsubsection{R-OOGD with Parallel Transport}
Conversion of past gradients plays a vital role in designing Riemannian optimistic algorithms.  Apart from the parallel transport method utilized in our R-OOGD algorithm, there is another methodology known as the correction term to transport past graident in Riemannian optimization \citep{zhang2022minimax,hu2023minimizing}. The correction term involves changing the base point of the exponential map to avoid parallel transport. Specifically, when considering two points $a$ and $b$, along with a tangent vector $v\in T_a\M$, the correction term employs $\exp_a(v+\exp_a^{-1}(b))$ instead of the direct parallel transport $\exp_b(\Gamma_a^b v)$. If our R-OOGD algorithm incorporates the correction term methodology, the algorithm will be expressed as follows,
\begin{align}\label{eq:corr}
    \begin{cases}
         x_{t+1} = \exp_{x_t}(-2\eta \nabla \ft(x_t)+\expinv{x_t}{\hat{x}_{t}}) \\
         \hat{x}_{t+1} = \exp_{x_{t}}(-\eta\nabla \ft(x_{t})+\expinv{x_{t}}{\hat x_{t}}).
    \end{cases}
\end{align}

However, incorporating the correction term into our R-OOGD algorithm does not yield an efficient online algorithm and is unable to guarantee a sublinear static regret. This is because the correction term introduces distortion in the inner product, and these distortions grow unboundedly over time. To be more precise, in order to analyze the static regret bound, we need to estimate the distortion in the inner product between the correction term and the real gradient term 
\begin{align*}
    A_t := \langle \expinv{x_t}{\hat{x}_{t}}- \Gamma_{x_{t-1}}^{x_t} \eta \nabla \f_{t-1}(x_{t-1}) , \expinv{x_t}{x} \rangle
\end{align*} at each iteration. However, as demonstrated in Appendix \ref{app:correct}, the distortion bound $A_t$ depends on the previous distortion $A_{t-1}$, which is governed by the recursive formula:
\begin{align}\label{eq:A_t}
A_{t} \le \bo\Big(( 5\eta G + 2A_{t-1})^2 (3\eta G+A_{t-1})\Big).
\end{align}

Therefore, the distortion accumulates iteratively and may eventually blow up. In contrast, our parallel transport method preserves the inner product, guaranteeing that the distortion term remains zero. Consequently, it is crucial to emphasize that parallel transporting past gradients is essential to ensure the effectiveness of the R-OOGD algorithm.

\subsubsection{Dynamic Regret for R-AOOGD}
Now we begin to analyze the the dynamic bound of the R-AOOGD algorithm.
\begin{theorem}\label{thm:aoogd}
 Suppose that Assumptions \ref{asm:start1}-\ref{asm:end1} hold.
Set $\mathcal{H} = \{ \eta_i = 2^{i-1} \sqrt{\frac{\sigma_0 D_0^2}{16\zeta_0^2 G^2 T}} \}$, $N = \Big\lceil \frac{1}{2}\log_2\big(\frac{\sigma_0 G^2 T}{D_0^2 L^2}\big) \Big\rceil+1$  and $\beta = \min\Big( \frac{1}{\sqrt{12D_0^4L^2+D_0^2G^2\zeta_0^2}} , \sqrt{\frac{2+\ln N}{3D_0^2(V_T+G^2)}}\Big)$. The R-AOOGD algorithm (Algorithms \ref{alg: R-AOOGDmeta} and \ref{alg:R-AOOGDexp}) incurs the dynamic regret
\begin{align*}
\RegD(u_1,\dots,u_T) \le \bo\Big(\frac{\zeta_0}{\sqrt{\sigma_0}}\sqrt{(1+V_T+P_T)(1+P_T)}\Big).
\end{align*}
\end{theorem}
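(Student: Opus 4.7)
The plan is to follow the meta-expert decomposition that is standard for adaptive optimistic algorithms, but implemented in the Riemannian setting via Fr\'echet means. Write
\[
\RegD(u_1,\dots,u_T) = \underbrace{\sum_{t=1}^{T}\big(\ft(x_t)-\ft(x_{i^*,t})\big)}_{\text{meta-regret}} + \underbrace{\sum_{t=1}^{T}\big(\ft(x_{i^*,t})-\ft(u_t)\big)}_{\text{expert-regret}},
\]
where $i^*$ indexes the step size in $\mathcal{H}$ closest to the optimum $\eta^\star$ dictated by Theorem \ref{thm: reg-c} for the actually realized $(P_T,V_T)$. The first-order optimality of the weighted Fr\'echet mean gives $\sum_i w_{t,i}\expinv{x_t}{x_{i,t}}=0$, which, combined with g-convexity of $\ft$ at $x_t$, yields
\[
\ft(x_t)-\ft(x_{i^*,t}) \le -l_{i^*,t} = \sum_{i} w_{t,i}l_{i,t} - l_{i^*,t}.
\]
This reduces the meta-regret exactly to an Optimistic Hedge regret on the $N$ experts with losses $\{l_{i,t}\}$ and hints $\{m_{i,t}\}$, and removes any direct dependence on the manifold structure at this stage.

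For the meta step I would invoke the standard Optimistic Hedge guarantee, which bounds the hedge regret by $\bo\bigl(\frac{\ln N}{\beta} + \beta\sum_t(l_{i^*,t}-m_{i^*,t})^2\bigr)$. The key estimate is then
\[
|l_{i,t}-m_{i,t}| \le \bigl\|\nabla \ft(x_t)-\Gamma_{\bar x_t}^{x_t}\nabla \ft(\bar x_t)\bigr\|\cdot\|\expinv{x_t}{x_{i,t}}\| + \text{(geometric distortion of }\expinv{\bar x_t}{x_{i,t}}\text{ vs. }\expinv{x_t}{x_{i,t}}),
\]
which by $L$-smoothness is controlled by $L D_0 d(x_t,\bar x_t)$ plus a curvature-dependent term quantified by $\zeta_0/\sigma_0$. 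Since $x_t$ and $\bar x_t$ are Fr\'echet means for the weight vectors $w_t$ and $w_{t-1}$ respectively on the same anchor points $\{x_{i,t}\}$, one bounds $d(x_t,\bar x_t)$ by $D_0\|w_t-w_{t-1}\|_1$ (via a comparison inequality applied to the first-order optimality conditions), and the latter in turn is controlled by the magnitude of the hedge update, yielding a self-bounding expression that, after summation, is $\bo(V_T+G^2)$ in the order of the gradient variation. The $P_T$ dependence will enter only through the expert-regret step below.

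For the expert regret, applying Theorem \ref{thm: reg-c} to expert $i^*$ gives
\[
\sum_{t=1}^{T}\bigl(\ft(x_{i^*,t})-\ft(u_t)\bigr) \le \frac{D_0^2+2D_0P_T}{\eta_{i^*}} + \eta_{i^*}\frac{4\zeta_0^2}{\sigma_0}(G^2+V_T).
\]
The geometric grid $\mathcal{H}=\{\eta_i=2^{i-1}\eta_{\min}\}$ with $N=\lceil\frac12\log_2(\sigma_0 G^2T/D_0^2L^2)\rceil+1$ is designed so that the optimal $\eta^\star\asymp\sqrt{(D_0^2+D_0P_T)/[(G^2+V_T)\zeta_0^2/\sigma_0]}$ is always within a factor two of some $\eta_{i^*}\in\mathcal{H}$, producing an expert regret of order $\frac{\zeta_0}{\sqrt{\sigma_0}}\sqrt{(1+P_T)(1+V_T)}$. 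Plugging the prescribed $\beta$ into the meta-bound and summing the two pieces then gives
$\bo\bigl(\frac{\zeta_0}{\sqrt{\sigma_0}}\sqrt{(1+V_T+P_T)(1+P_T)}\bigr)$.

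The main obstacle I foresee is the Riemannian Optimistic Hedge step, specifically justifying that $d(x_t,\bar x_t)$ is controlled by $\|w_t-w_{t-1}\|_1$ with constants depending only on $D_0,\zeta_0,\sigma_0$: this requires differentiating the Fr\'echet-mean first-order optimality and invoking a Hessian comparison under Assumption \ref{asm:bound} to obtain a bi-Lipschitz dependence of the Fr\'echet mean on its weights. Once that lemma is in place, the remainder is an algebraic optimization of $\beta$ and a routine application of the geometric-grid trick from \cite{zhang2018adaptive,zhao2020dynamic} lifted by the curvature constants $\zeta_0,\sigma_0$ as in Theorem \ref{thm: reg-c}.
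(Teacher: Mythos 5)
Your proposal matches the paper's proof essentially step for step: the same meta/expert decomposition of $\RegD$, the same application of Theorem \ref{thm: reg-c} to the expert whose step size $\eta_{i^*}$ lies within a factor of two of the optimal $\eta^*$ on the geometric grid $\mathcal{H}$, and the same Riemannian Optimistic Hedge bound with hint losses $m_{i,t}$ at the meta level, tuned by the same choice of $\beta$. The only difference is that the paper imports the entire meta-regret bound as Theorem 2 of \cite{hu2023minimizing} --- which contains precisely the ingredients you sketch, namely the Fr\'echet-mean first-order optimality reduction to linear hedge losses, the stability estimate $d(x_t,\bar x_t)\lesssim D_0\|w_t-w_{t-1}\|_1$, and the cancellation of the $3\beta(D_0^4L^2+D_0^2G^2\zeta_0^2)\|w_t-w_{t-1}\|_1^2$ term against $-\tfrac{1}{4\beta}\|w_t-w_{t-1}\|_1^2$ that your $\beta$ guarantees --- so the ``main obstacle'' you flag is real but already resolved in the cited work rather than in this paper's appendix.
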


The proof of Theorem \ref{thm:aoogd} can be found in Appendix \ref{app:reg}. In  Theorem \ref{thm:aoogd},An $\bo\Big(\sqrt{(1+V_T+P_T)(1+P_T)}\Big)$ regret bound is established for our meta-expert algorithm R-AOOGD which aligns with the findings in online optimization in Euclidean space \citep{zhao2020dynamic}. When considering the RADRv algorithm \citep{hu2023minimizing}, which is another Riemannian online meta-expert algorithm, we observe certain differences compared to our R-AOOGD algorithm. In the non-projection case, the RADRv algorithm at the meta level requires $N = \Big\lceil \frac{1}{2}\log_2\big(\frac{4\sigma_0 G^2 T}{D_0^2 L^2}\big) \Big\rceil+1$ expert algorithms, which is equal to or more than the number of experts required by our R-AOOGD algorithm. Additionally, at the expert level, the RADRv requires two gradients per iteration, while our R-AOOGD algorithm only requires one gradient. As a result, our R-AOOGD algorithm achieves the same order of regret bound of $\bo\Big(\sqrt{(1+V_T+P_T)(1+P_T)}\Big)$ while utilizing only half the number of gradients.

\section{Application: Nash Equilibrium Seeking in Riemannian Zero-sum Games}

In this section, we apply our R-OOGD to Riemannian zero-sum (RZS) games, where both players update their actions based on the R-OOGD dynamics. We then analyze the convergence rates of the resulting Riemannian Optimistic Gradient Descent Ascent method (R-OGDA). Specifically, we study the average-iterate convergence rate and the best-iterate convergence rate for g-convex-concave games, as well as the last-iterate convergence rate for g-strongly convex-strongly concave games.

\subsection{Formulation of Riemannian Zero-sum Games}
Riemannian zero-sum (RZS) games involve a competitive scenario between two players, denoted as $\mathtt X$ and $\mathtt Y$, i.e.,
\begin{align}\label{eq:RZS}
    \min_{x\in\M} \max_{y\in \N} \mathbf \f(x,y),
\end{align}
where player-$\mathtt X$ tries to find a strategy $x$ from a Riemannian manifold $\M$ to minimize the payoff function $\f(x,y)$, while player-$\mathtt Y$ tries find a strategy $y$ from a Riemannian manifold $\N$ to maximize the payoff function $\f(x,y)$.

A key concept in the RZS games is \textit{Nash Equilibrium} (NE). In the RZS game, an action pair $(x^*,y^*) \in \M \times \N$ is an NE if no player can improve individual payoff by only deviating his own action, i.e.,
\begin{align*}
    \max_{y\in \N} \f(x^*,y) = \f(x^*,y^*) = \min_{x \in \M} \f(x,y^*).
\end{align*}
Computing NEs in the Riemannian zero-sum games has direct applications in several learning tasks including minimum balanced cut, robust geometry-aware PCA, and robust Wasserstein barycenters \citep{khuzani2017stochastic,horev2016geometry,lin2020projection,zhang2022minimax}.

\subsection{Riemannian Optimistic Gradient Descent Ascent Algorithm}
We propose the Riemannian Optimistic Gradient Descent Ascent Algorithm (R-OGDA) as follows.
\begin{algorithm}[H]
    \centering
    \caption{Riemannian Optimistic Gradient Descent Ascent}
    \begin{algorithmic}
    \Require $(\M,\N$), step size $\eta$, 
    \State Initialize $(x_{-1},y_{-1})=(x_0,y_0)\in \M\times\N$.
    \For {$t$ = $2$ to $T-1$}
        \State Update $x_{t+1} = \exp_{x_t}(-2\eta \nabla_{x} \f(x_t,y_t) + \eta\Gamma^{x_t}_{x_{t-1}} \nabla_x \f(x_{t-1},y_{t-1}) )$
        \State Update $y_{t+1} = \exp_{y_t}(2\eta \nabla_{y} \f(x_t,y_t) - \eta\Gamma^{y_t}_{y_{t-1}} \nabla_y \f(x_{t-1},y_{t-1}) )$
        \State Average from geodesic 
        \begin{align}\label{eq: avg scheme}
        \begin{cases}
            \bar x_{t+1} = \exp_{\bar x_t}( \frac{1}{(t+1)} \exp^{-1}_{\bar x_t} x_{t+1} )\\
            \bar y_{t+1} = \exp_{\bar y_t}( \frac{1}{(t+1)}\exp^{-1}_{\bar y_t} y_{t+1} )
        \end{cases}
        \end{align}
    \EndFor
    \Ensure Sequence $(x_t,y_t)_{t=1}^T$, average $(\bar x_T, \bar y_T)$ .
    \end{algorithmic}\label{alg:R-OGDA}
\end{algorithm}

\subsection{Average-Iterate Analysis}
     We first analyze the convergence rate of averaged iterate $(\bar x_T,\bar y_T)$ by adpoting the following assumptions. To ease the notation, we denote $z_t=(x_t,y_t)$, $z^* = (x^*, y^*)$, and $\F(z_t) = [\nabla_{x} \f(z_t), -\nabla_{y} \f(z_t)]$.
    \begin{assumption}\label{asm:start2}
        The payoff function $f$ is g-$G$-Lipschitz and g-convex-concave on the manifold $\M \times \N$.
    \end{assumption}
    The g-convexity-concavity is helpful in analyzing Nash equilibriums. For any NE point $z^*=(x^*,y^*)$, the g-convexity-concavity of $\f$ implies that 
    \begin{align*}
        \langle -\F(z), \expinv{z}{z^*} \rangle \ge 0
    \end{align*} holds for all $z \in \M$. Moreover, if $\f$ is $\mu$-g-strongly-convex  strongly-concave, it further holds that for all $z \in \M$,
    \begin{align}\label{eq: strong-cc}
        \langle -\F(z), \expinv{z}{z^*} \rangle \ge\frac{\mu}{2} d^2(z, z^*).
    \end{align}
    \begin{assumption}
        The payoff function $f$ is g-$L$-smooth on $\M \times \N$, i.e., there is a constant $L>0$ such that
        \begin{align*}
            \|\nabla \f(x,y) -\Gamma_{(x^\prime,y^\prime)}^{(x,y)} \nabla \f(x^\prime,y^\prime)\|^2 \le L^2 (d^2(x,x^\prime) + d^2(y,y^\prime)), \quad \forall (x,y),(x^\prime,y^\prime) \in \M\times\N.
        \end{align*}
    \end{assumption}

    \begin{assumption}\label{asm:ne}
        There is a Nash equilibrium $z^* = (x^*,y^*)$ in the Riemannian zero-sum game \eqref{eq:RZS}.
    \end{assumption}
    Assumption \ref{asm:ne} can be directly derived from the g-strongly convex-strongly concave property of $\f$ in certain cases. In the case where the manifold $\M$ is compact or the level set $\{(x, y) \mid -\infty \le \f(x, y) \le \infty\}$ is bounded, we can rely on the Riemannian analog of Sion's minimax theorem \citep{zhang2022minimax} to establish the existence of a Nash equilibrium. Furthermore, if we assume g-strong-convexity strong-concavity of the function $\f$, by \eqref{eq: strong-cc}, Assumption \ref{asm:ne} guarantees the uniqueness of the resulting Nash equilibrium $z^*$.
    \begin{assumption}\label{asm:end2}
        All sectional curvatures of $\M$ and $\N$ are bounded below by a constant $\kappa$ and bounded above by a constant $K$.
    \end{assumption}

    We first show that it is possible to drop the boundedness assumption (Assumption \ref{asm:bound}) in the iteration of $x_t$ and $y_t$ in the Riemannian zero-sum (RZS) games \eqref{eq:RZS}, if prior knowledge of the distance $d(z_0,z^*)$ is available. The following Lemma \ref{lemma:bdd} ensures that the iteration of $x_t$ and $y_t$ is in a bounded set $\K$.

    \begin{lemma}\label{lemma:bdd}
        Suppose Assumptions \ref{asm:start2}-\ref{asm:end2} hold. Let the step size $\eta$ satisfying $\eta \le \min(\frac{\sigma_1}{\zeta_1 L}, \frac{D_1}{3G})$. Let $d(z_0,z^*) \le D_1 < \frac{\pi}{6\sqrt{K}}$, then by denoting $\zeta_1 = \zeta(\kappa,3D_1)$ and $\sigma_1 = \zeta(K,3D_1)$, we have $d(z_t,z^*) \le 2D_1$ for all iterations $t$.
    \end{lemma}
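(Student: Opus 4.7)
The plan is to establish the bound $d(z_t,z^*)\le 2D_1$ by induction on $t$, combining the geodesic comparison inequalities (the ``trigonometric distance lemmas'' parameterized by $\zeta_1$ and $\sigma_1$) with the g-monotonicity of the joint operator $\F(z)=[\nabla_x\f(z),-\nabla_y\f(z)]$ and a Lyapunov-style potential that accounts for the optimistic correction.

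\textbf{Set-up and base case.} First I would verify that the update displacement is small: since $\|\exp_{z_t}^{-1}z_{t+1}\|=\|-2\eta\F(z_t)+\eta\Gamma^{z_t}_{z_{t-1}}\F(z_{t-1})\|\le 3\eta G\le D_1$ whenever $\eta\le D_1/(3G)$. Together with $d(z_0,z^*)\le D_1$ and the initialization $z_{-1}=z_0$, this handles $t=0,1$ and, crucially, guarantees that under the inductive hypothesis any geodesic triangle with vertices among $z_{t-1},z_t,z_{t+1},z^*$ has diameter $\le 3D_1<\pi/(2\sqrt{K})$. Thus $\expinv{\cdot}{\cdot}$ is single-valued on the relevant region and $\zeta_1=\zeta(\kappa,3D_1)$, $\sigma_1=\sigma(K,3D_1)$ govern the distortion of triangles appearing in the analysis.

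\textbf{One-step estimate.} Applying the upper-curvature comparison inequality to the triangle $(z_t,z_{t+1},z^*)$ gives
\begin{align*}
d^2(z_{t+1},z^*)\le d^2(z_t,z^*)+\zeta_1\|\expinv{z_t}{z_{t+1}}\|^2-2\langle \expinv{z_t}{z_{t+1}},\expinv{z_t}{z^*}\rangle.
\end{align*}
I would rewrite $\expinv{z_t}{z_{t+1}}=-\eta\F(z_t)+\eta\bigl(\Gamma^{z_t}_{z_{t-1}}\F(z_{t-1})-\F(z_t)\bigr)$ and split the inner product. The g-convex-concave property yields $-\eta\langle\F(z_t),\expinv{z_t}{z^*}\rangle\ge 0$, so the genuinely monotone part of the update contributes non-negatively; the remaining error term is controlled by g-$L$-smoothness through $\|\Gamma^{z_t}_{z_{t-1}}\F(z_{t-1})-\F(z_t)\|\le L\, d(z_t,z_{t-1})$, and $d(z_t,z_{t-1})=\|\expinv{z_{t-1}}{z_t}\|$ makes this consecutive-displacement term recursive.

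\textbf{Lyapunov construction, the main obstacle.} The hard step is assembling these estimates into a potential of the form $\Phi_t:=d^2(z_t,z^*)+c\,d^2(z_t,z_{t-1})$ and showing $\Phi_{t+1}\le\Phi_t$ under the step-size condition $\eta\le \sigma_1/(\zeta_1 L)$. This mirrors the Euclidean OGDA analysis of \cite{mokhtari2020unified} but on the manifold two curvature corrections appear simultaneously: $\zeta_1$ inflates $\|\expinv{z_t}{z_{t+1}}\|^2$ in the upper comparison, while a lower-curvature (reverse) comparison is needed to recover a quadratic $\sigma_1\|\expinv{z_t}{z_{t+1}}\|^2$ from a cross-term $\langle\expinv{z_t}{z_{t+1}},\expinv{z_t}{z_{t-1}}\rangle$ — this is where the condition $\eta\zeta_1 L\le\sigma_1$ is used to make the negative quadratic in $\|\expinv{z_t}{z_{t+1}}\|^2$ dominate the positive quadratic generated by the optimistic correction and the smoothness slack.

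\textbf{Conclusion.} Once $\Phi_{t+1}\le\Phi_t$ is established, telescoping gives $d^2(z_t,z^*)\le\Phi_t\le\Phi_0=d^2(z_0,z^*)+c\,d^2(z_0,z_{-1})=d^2(z_0,z^*)\le D_1^2$, since $z_{-1}=z_0$. Combining with the one-step bound $d(z_{t+1},z_t)\le 3\eta G\le D_1$ (applied once, at the step immediately producing $z_{t+1}$) gives $d(z_{t+1},z^*)\le d(z_0,z^*)+D_1\le 2D_1$, closing the induction and proving the lemma.
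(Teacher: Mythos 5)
Your scaffolding matches the paper's: induction on $t$, the one-step displacement bound $d(z_t,z_{t+1})\le 3\eta G\le D_1$ from $\eta\le D_1/(3G)$, the intermediate radius $3D_1<\frac{\pi}{2\sqrt K}$ legitimizing the comparison constants $\sigma_1,\zeta_1$, monotonicity of $\F$ at the equilibrium, and g-$L$-smoothness to control the optimistic correction. But the step you yourself flag as ``the main obstacle'' is exactly where the proposal breaks, and the repair you name is not the one that works. After splitting $\expinv{z_t}{z_{t+1}}=-\eta\F(z_t)+\eta\big(\Gamma_{z_{t-1}}^{z_t}\F(z_{t-1})-\F(z_t)\big)$ and discarding the monotone part, you are left with the cross term $2\eta\langle \F(z_t)-\Gamma_{z_{t-1}}^{z_t}\F(z_{t-1}),\expinv{z_t}{z^*}\rangle$. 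Bounding it crudely by $2\eta L\,d(z_t,z_{t-1})\,d(z_t,z^*)$ and applying Young's inequality produces a piece proportional to $d^2(z_t,z^*)$ with a \emph{positive} coefficient of order $\eta L$, so your candidate potential $\Phi_t=d^2(z_t,z^*)+c\,d^2(z_t,z_{t-1})$ satisfies at best $\Phi_{t+1}\le(1+O(\eta L))\,\Phi_t$, which compounds over iterations and yields no uniform bound. No choice of $c$ and no step-size condition of the form $\eta\le\sigma_1/(\zeta_1 L)$ fixes this, because the offending term is \emph{linear} in the consecutive displacement and is multiplied by the distance to $z^*$; this is why, in the merely monotone (g-convex-concave) regime, the known decreasing potentials for OGDA are built on the auxiliary ``extrapolation-from-the-past'' points $\hat z_t$ rather than on $(z_t,z_{t-1})$ directly --- which is precisely what the paper does later, in the proof of Theorem \ref{thm: last-c}, with $\phi_{t+1}=d(\hat z_{t+1},z^*)+\sigma_1 d^2(z_{t+1},\hat z_{t+2})$.

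The paper's proof of this lemma sidesteps a per-step Lyapunov entirely: it keeps the cross term in \emph{signed} form, as the difference $\langle \expinv{z_{t+1}}{z^*},-\F(z_{t+1})+\Gamma_{z_t}^{z_{t+1}}\F(z_t)\rangle-\langle \expinv{z_t}{z^*},-\F(z_t)+\Gamma_{z_{t-1}}^{z_t}\F(z_{t-1})\rangle$, exactly as in the regret analysis of Theorem \ref{thm: reg-c} instantiated with constant comparators $u_t\equiv x^*$ (resp.\ $y^*$). Summing from $t=0$ to $K_0$ telescopes everything except a single boundary term, the sum of the nonnegative monotone terms $\langle\expinv{z_{t+1}}{z^*},-\F(z_{t+1})\rangle\ge 0$ serves as the lower bound, and the one surviving boundary term is handled by smoothness plus Young, giving $d^2(z_{K_0+1},z^*)\le \frac{1}{1-\eta L}\,d^2(z_0,z^*)\le 2D_1^2$, hence $d(z_{K_0+1},z^*)\le\sqrt 2\,D_1\le 2D_1$, closing the induction. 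If you want to salvage your route, you must augment your potential with the signed linear term $2\eta\langle\expinv{z_t}{z^*},\F(z_t)-\Gamma_{z_{t-1}}^{z_t}\F(z_{t-1})\rangle$ so that consecutive cross terms cancel --- but at that point your argument is the paper's telescoping argument in disguise, not a genuinely different Lyapunov proof.
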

    Then we derive the average-iterate convergence rate for g-convex-concave RZS games in the following Theorem \ref{thm: avg-c}.
    \begin{theorem}\label{thm: avg-c}
         Under Assumptions \ref{asm:start2}-\ref{asm:end2} and the condition in Lemma \ref{lemma:bdd}, the averaged iterate $(\bar x_T,\bar y_T)$ of Algorithm \ref{alg:R-OGDA} with the step size $\eta \le \frac{\sigma_1}{2 \zeta_1 L}$ satisfies:
         \begin{align*}
              \max_{y\in\N}\f(\bar x_T, y ) - \min_{x\in\M}\f(x, \bar y_T ) \le \frac{D_1^2L+ \frac{D_1^2}{2\eta}}{T}
         \end{align*}
    \end{theorem}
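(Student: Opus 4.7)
The plan is to unfold the duality gap into a running sum of pairings with the game vector field $\F(z)=[\nabla_x\f(z),-\nabla_y\f(z)]$ on the product manifold $\M\times\N$, and then reuse the same one-step optimistic estimate that powers Theorem \ref{thm: reg-c}, but now with a constant comparator. First, the recursive averaging in \eqref{eq: avg scheme} places $\bar x_{t+1}$ on the geodesic from $\bar x_t$ to $x_{t+1}$ at parameter $1/(t+1)$, so g-convexity of $\f(\cdot,y)$ and g-concavity of $\f(x,\cdot)$ imply inductively the Riemannian Jensen bounds $\f(\bar x_T,y)\le\tfrac{1}{T}\sum_t\f(x_t,y)$ and $\f(x,\bar y_T)\ge\tfrac{1}{T}\sum_t\f(x,y_t)$. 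Taking $\sup_y$ and $\inf_x$ yields
\begin{align*}
\max_{y\in\N}\f(\bar x_T,y)-\min_{x\in\M}\f(x,\bar y_T)\le\frac{1}{T}\sup_{z=(x,y)\in\M\times\N}\sum_{t=1}^T\bigl[\f(x_t,y)-\f(x,y_t)\bigr].
\end{align*}

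Next, using g-convexity of $\f(\cdot,y_t)$ at $x_t$ and g-concavity of $\f(x_t,\cdot)$ at $y_t$,
\begin{align*}
\f(x_t,y)-\f(x,y_t)\le\langle\nabla_y\f(z_t),\expinv{y_t}{y}\rangle-\langle\nabla_x\f(z_t),\expinv{x_t}{x}\rangle=\langle-\F(z_t),\expinv{z_t}{z}\rangle,
\end{align*}
with the inner product read on the product tangent space $T_{z_t}(\M\times\N)$. It therefore suffices to prove $\sum_{t=1}^T\langle-\F(z_t),\expinv{z_t}{z}\rangle\le D_1^2L+D_1^2/(2\eta)$ uniformly over $z$ inside the ball produced by Lemma \ref{lemma:bdd}, where the iterates $z_t$ are also confined.

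The R-OGDA update on $\M\times\N$ is exactly an R-OOGD step driven by the operator $\F$, so the per-round estimate behind the proof of Theorem \ref{thm: reg-c} applies with the frozen comparator $u_t\equiv z$ (hence $P_T=0$). The one substantive change is that the ``gradient variation'' is now endogenous: by g-$L$-smoothness, $\|\F(z_t)-\Gamma_{z_{t-1}}^{z_t}\F(z_{t-1})\|\le L\,d(z_{t-1},z_t)$. The one-step inequality then reduces, schematically, to
\begin{align*}
2\eta\,\langle-\F(z_t),\expinv{z_t}{z}\rangle\le\sigma_1\bigl(d^2(z_t,z)-d^2(z_{t+1},z)\bigr)-c_1 d^2(z_t,z_{t+1})+c_2\eta^2L^2 d^2(z_{t-1},z_t),
\end{align*}
with $c_1,c_2$ absolute constants and the curvature distortions $\sigma_1,\zeta_1$ built in via the comparison inequalities of Section \ref{sec:oco}. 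The step-size cap $\eta\le\sigma_1/(2\zeta_1L)$ is chosen precisely so that $c_2\eta^2L^2\le c_1$, letting the $d^2(z_{t-1},z_t)$ entries telescope together with the $d^2(z_t,z)$ entries. Summing from $t=1$ to $T$ and using Lemma \ref{lemma:bdd} to cap $d(z_0,z)\le 3D_1$ produces the advertised $D_1^2/(2\eta)+D_1^2L$ bound.

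The hard part will be this final calibration: faithfully translating the Euclidean OGDA one-step inequality to a Riemannian one via the comparison constants $\sigma_1,\zeta_1$ on the enlarged ball of diameter $3D_1$ granted by Lemma \ref{lemma:bdd}, and verifying that the explicit threshold $\eta\le\sigma_1/(2\zeta_1L)$ is exactly the price needed for the smoothness-driven cross term $\eta^2L^2d^2(z_{t-1},z_t)$ to be dominated by the descent quadratic $d^2(z_t,z_{t+1})$. Everything else mirrors the proof of Theorem \ref{thm: reg-c}, specialized to a constant comparator on the product manifold.
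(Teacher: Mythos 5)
Your proposal takes essentially the same route as the paper: the paper also reduces the duality gap via the inductive Riemannian Jensen bounds along the averaging geodesics to $\frac{1}{T}\sum_{t=1}^T\langle-\F(z_t),\expinv{z_t}{z}\rangle$, and then bounds this sum by rerunning the one-step optimistic inequality from Theorem \ref{thm: reg-c} on the product manifold with a fixed comparator, using g-$L$-smoothness to make the variation term endogenous and the cap $\eta\le\frac{\sigma_1}{2\zeta_1 L}$ to telescope (this calculation lives in the proof of Lemma \ref{lemma:bdd} and is invoked in Appendix \ref{app:avg}). Your calibration of the step-size threshold and the confinement of the comparator to the ball from Lemma \ref{lemma:bdd} match the paper's argument, so there is no substantive difference to report.
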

    The proof of Theorem \ref{thm: avg-c} is in Appendix \ref{app:avg}. Theorem \ref{thm: avg-c} demonstrates that the averaged iterate $(\bar x_T,\bar y_T)$ is an $\bo(\frac{1}{T})$-NE in g-convex-concave RZS games. The result extends the corresponding results in Euclidean spaces \citep{mokhtari2020unified} and recovers the convergence rate of RCEG \citep{zhang2022minimax} by acquiring only one gradient in each iteration.
    
\subsection{Last-Iterate/Best-Iterate Analysis}
In this section, we focus on the last/best-iterate convergence of our R-OGDA algorithm. Dealing with the convergence of the R-OGDA algorithm in terms of the last-iterate or best-iterate is quite challenging. In Euclidean spaces, optimistic gradient descent/ascent algorithms benefit from the perspective of ``extrapolation from the past'' \citep{gidel2018a}. This means that by defining the immediate sequence
\begin{align*}
\hat{z}^E_{t+1} = z_t - \eta(\F(z_t) + \F(z_{t-1})), \tag*{(*)} \label{eq:eu1}
\end{align*}
the relationship holds \cite{gidel2018a}:
\begin{align*}
    \eta \F(z_{t}) = \hat{z}^E_t - \hat{z}^E_{t+1}. \tag*{(**)} \label{eq:eu2}
\end{align*} However, when we define the Riemannian counterparts of \ref{eq:eu1} $\hat{z}_{t+1} = \exp_{z_t}(\eta(-\F(z_t) + \F(z_{t-1})))$, it fails to hold that $\expinv{\hat z_{t+1}} { \hat z_{t} } = \eta \Gamma_{z_{t}}^{\hat{z}_{t+1}} \F(z_{t})$, which is the Riemannian version of \ref{eq:eu2}. In addition, measuring the distortion between vector $\expinv{\hat{z}_{t+1} }{\hat{z}_{t} }$ and $\eta \Gamma_{z_{t}}^{\hat{z}_{t+1}} \F(z_{t})$ is challenging.

One natural idea is to parallel transport the tangent vectors $\expinv{z_t}{z_{t-1}}$, $\expinv{z_t}{\hat z_t}$ and $\expinv{z_{t+1}}{\hat z_{t+1}}$ to the tangent space at $\hat z_{t+1}$, and then estimate the distortion between $G_t$ and $\Gamma_{z_{t}}^{\hat{z}{t+1}} \F(z_{t})$ using their vector sum. A commonly used technique in manifold settings, the comparison inequalities \citep{zhang2016first,alimisis2021momentum}, follows the same idea to estimate the Hessian-type distortion, i.e., 
\begin{align*}
    \sigma(K,D) \|\expinv{a}{b}\| \le \| \exp^{-1}_a b - \Gamma_c^a \exp^{-1}_c b \| \le \zeta(k,D) \|\expinv{a}{b}\|.
\end{align*} The crux of these inequalities is to use the gradient squared distance $\nabla d(b,x) = - \expinv{b}{x}$ on the manifold, and then estimate the error by exploiting the eigenvalues of the Hessian matrix of the distance function. Therefore, the comparison inequalities require the compared tangent vectors to share the same endpoint, which is not applicable in our case.

Furthermore, we have observed that the distortion analysis between $G_t$ and $\Gamma_{z_t}^{\hat{z}{t+1}} \F(z_t)$ brings in extra errors beyond the Hessian-type distortions. Parallel transporting vectors in the two distinct tangent spaces to a third tangent space can lead to additional distortion. Specifically, when dealing with three points $a,b,c$, and vectors $v\in T_{a}\M$ and $w\in T_{b}\M$, we have \begin{align*}
   \| \Gamma_a^c v - \Gamma_b^c w \| \neq \| \Gamma_a^b v -w \|. \end{align*}
    In particular, it holds 
    \begin{align*}
        \|\Gamma_a^c v - \Gamma_b^c w \| = \| \Gamma_a^b v -w \| + \| \Gamma_b^a \Gamma_c^b \Gamma_a^c v -w\| .
    \end{align*}
 On Riemannian manifolds, the latter term $\| \Gamma_b^a \Gamma_c^b \Gamma_a^c v -w\|$ does not vanish due to the presence of the holonomy effects, which indicates that parallel transport around closed loops fails to preserve the geometric data being transported. Therefore, estimating the distortion between $G_t$ and $\Gamma_{z_{t}}^{\hat{z}{t+1}} \F(z_{t})$ is faced with the change of holonomy distortion and becomes highly nontrivial.

To address the aforementioned difficulties, we propose a key technique inspired by the famous Gauss-Bonnet theorem \citep{chern1999lectures}, and estimate the holonomy distortion by the step size $\eta$ and sectional curvature $K_m = \max\{|K|,|\kappa|\}$.

\begin{lemma}\label{lemma:key}
    Suppose that $\f$ is g-$L$-smooth. If the step size $\eta \le \frac{1}{20L}$, them $G_{t+1} =  \expinv{\hat z_{t+1}} { \hat z_{t} }$ satisfies the following
    \begin{itemize}
        \item [(i)] $\|G_{t+1}\|^2 - \| \eta \F(z_{t}) \|^2 \le 64 K_m^2 \eta^6 \| \F(z_{t-1})\|^6  $;
        \item [(ii)] $\|\Gamma_{\hat z_{t+1}}^{z_{t}} G_{t+1} -\eta \F(z_{t}) \|\le  104 K_m\eta^3\|  \F(z_{t-1}) \|^3$.
    \end{itemize}
\end{lemma}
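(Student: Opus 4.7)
The plan is to express $G_{t+1} = \expinv{\hat z_{t+1}}{\hat z_t}$ by routing the quadrilateral $\hat z_{t+1}, z_t, z_{t-1}, \hat z_t$ through the common vertex $z_t$, whose tangent space carries clean expressions for both incident legs. From the update rule defining $\hat z_{t+1}$, I read off directly
\begin{align*}
\expinv{z_t}{\hat z_{t+1}} = -\eta\F(z_t) + \eta\Gamma_{z_{t-1}}^{z_t}\F(z_{t-1}).
\end{align*}
For the leg $z_t \to \hat z_t$, both endpoints are exponential images at $z_{t-1}$ of tangent vectors that differ exactly by $-\eta \F(z_{t-1})$, so a Jacobi-field Taylor expansion at $z_{t-1}$ together with the curvature comparison inequalities introduced via $\sigma, \zeta$ yields $\expinv{z_t}{\hat z_t} = \eta\Gamma_{z_{t-1}}^{z_t}\F(z_{t-1}) + R_1$ with $\|R_1\| = \bo(K_m\eta^3\|\F(z_{t-1})\|^3)$. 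The step-size condition $\eta L \le 1/20$ combined with g-Lipschitzness ensures that every side of the triangle $\{z_{t-1}, z_t, \hat z_t\}$ is of order $\eta\|\F(z_{t-1})\|$ and that $\|\F(z_t)\|$ and $\|\F(z_{t-1})\|$ differ only by a constant factor, so the bound can be uniformly phrased in $\|\F(z_{t-1})\|$.

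Next I lift the Euclidean identity $\hat z_t - \hat z_{t+1} = (\hat z_t - z_t) - (\hat z_{t+1} - z_t)$ to the manifold. Parallel-transporting $G_{t+1}$ to $T_{z_t}\M$ and applying a second-order exponential-map expansion at $z_t$ produces
\begin{align*}
\Gamma_{\hat z_{t+1}}^{z_t} G_{t+1} = \expinv{z_t}{\hat z_t} - \expinv{z_t}{\hat z_{t+1}} + R_2,
\end{align*}
where $R_2$ is the holonomy correction from transporting a tangent vector around the closed geodesic triangle $\{\hat z_{t+1}, z_t, \hat z_t\}$. This is precisely the Gauss--Bonnet-type term the paper flags as the novel source of error beyond the Hessian-type comparison inequalities: the triangle has perimeter $\bo(\eta\|\F(z_{t-1})\|)$ and enclosed area $\bo(\eta^2\|\F(z_{t-1})\|^2)$, so its holonomy acting on a tangent vector of length $\bo(\eta\|\F\|)$ contributes $\|R_2\| = \bo(K_m\eta^3\|\F(z_{t-1})\|^3)$. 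Substituting the two expressions above into the decomposition cancels the $\eta\Gamma_{z_{t-1}}^{z_t}\F(z_{t-1})$ terms and leaves $\eta \F(z_t) + R_1 + R_2$, proving (ii); the explicit constant $104$ comes from tracking all Taylor coefficients.

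For part (i) I use that parallel transport is an isometry so $\|G_{t+1}\| = \|\Gamma_{\hat z_{t+1}}^{z_t}G_{t+1}\|$ and expand
\begin{align*}
\|G_{t+1}\|^2 - \|\eta\F(z_t)\|^2 = 2\bigl\langle\Gamma_{\hat z_{t+1}}^{z_t}G_{t+1} - \eta\F(z_t),\,\eta\F(z_t)\bigr\rangle + \|\Gamma_{\hat z_{t+1}}^{z_t}G_{t+1} - \eta\F(z_t)\|^2.
\end{align*}
A naive application of (ii) only gives $\bo(K_m\eta^4\|\F\|^4)$, which is too weak. To reach the stated $\bo(K_m^2\eta^6\|\F\|^6)$ I exploit that the leading-order holonomy in $R_2$ acts as an infinitesimal rotation in the sectional plane and is therefore orthogonal to the transported vector $\eta \F(z_t)$; the inner product consequently picks up only the next-order correction, which is itself $\bo(K_m\eta^3\|\F\|^3)$, producing the product of two curvature-dependent factors needed for the bound $64 K_m^2\eta^6\|\F(z_{t-1})\|^6$.

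The main obstacle is Riemannian bookkeeping. The curvature comparison inequalities provided in the preliminaries handle Hessian-type distortion between tangent vectors sharing a base point, but the three relevant vectors $\expinv{z_t}{\hat z_{t+1}}, \expinv{z_t}{\hat z_t}, G_{t+1}$ live in tangent spaces at three different points, so the argument must transport them around a closed triangle and precisely here the holonomy effect enters. Making the Gauss--Bonnet estimate quantitative---sharpening the area/curvature heuristic into the explicit constants $64$ and $104$, and verifying the required near-isometry improvement for (i)---requires carefully tracking the higher-order terms in both the Jacobi expansion and the holonomy expansion; this is the technically delicate core of the proof that goes beyond the tools used in earlier sections.
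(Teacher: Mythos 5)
Your route to part (ii) is genuinely different from the paper's, and it works. The paper anchors everything at $z_{t-1}$: it applies the Hessian-comparison identity (Lemma \ref{lemma:H}, Corollary \ref{lemma:tau}) in the two triangles $\triangle z_{t-1}\hat z_{t+1} z_t$ and $\triangle z_{t-1}\hat z_{t+1}\hat z_t$ to obtain $\|\Gamma_{\hat z_t}^{z_{t-1}}\Gamma_{\hat z_{t+1}}^{\hat z_t}G_{t+1} - \eta\Gamma_{z_t}^{z_{t-1}}\F(z_t)\| \le 8K_m\eta^3\|\F(z_{t-1})\|^3$; because this compares $G_{t+1}$ transported along $\hat z_{t+1}\to\hat z_t\to z_{t-1}$ against $\F(z_t)$ transported along $z_t\to z_{t-1}$, switching to the direct transport $\Gamma_{\hat z_{t+1}}^{z_t}$ costs a holonomy term around the geodesic rectangle $z_t z_{t-1}\hat z_t\hat z_{t+1}$, estimated via the Gauss--Bonnet variant (Lemma \ref{lemma:rotate}) with Jacobi-field bounds (Lemmas \ref{lemma:rauch} and \ref{lemma:sinh}); that contributes $96K_m\eta^3\|\F(z_{t-1})\|^3$, whence $8+96=104$. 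Your decomposition at the shared vertex $z_t$ avoids the rectangle entirely: $\expinv{z_t}{\hat z_{t+1}}$ is exact from the update rule, your expansion $\expinv{z_t}{\hat z_t} = \eta\Gamma_{z_{t-1}}^{z_t}\F(z_{t-1}) + R_1$ follows rigorously from Corollary \ref{lemma:tau}(ii) applied in $\triangle z_t \hat z_t z_{t-1}$ together with the exact identity $\expinv{z_t}{z_{t-1}} = -\Gamma_{z_{t-1}}^{z_t}\expinv{z_{t-1}}{z_t}$, and your correction $R_2$ is Corollary \ref{lemma:tau}(ii) in $\triangle z_t\hat z_t\hat z_{t+1}$; since $d(z_t,\hat z_{t+1})\le 4L\eta^2\|\F(z_{t-1})\|$, that term is of order $K_mL^3\eta^6\|\F\|^3$, far below $K_m\eta^3\|\F\|^3$, so your route would in fact yield a constant well below $104$. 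One quibble: $R_2$ in this decomposition is not a holonomy term but the same Hessian-type distortion as $R_1$ (no closed-loop transport is needed on your route), though the order-of-magnitude estimate you give for it is correct anyway.

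Part (i), however, has a genuine gap. You correctly diagnose that (ii) plus the expansion $\|G_{t+1}\|^2 - \|\eta\F(z_t)\|^2 = 2\langle \Delta, \eta\F(z_t)\rangle + \|\Delta\|^2$ with $\Delta = \Gamma_{\hat z_{t+1}}^{z_t}G_{t+1}-\eta\F(z_t)$ only gives $\bo(K_m\eta^4\|\F\|^4)$, but your proposed fix---that the leading error acts as an infinitesimal rotation orthogonal to $\eta\F(z_t)$---fails: the dominant contribution to $\Delta$ is your own $R_1$, which by Lemma \ref{lemma:H} has the form $(H-\mathrm{Id})$ applied to a tangent vector, where $H$ is self-adjoint with spectrum in $[\sigma,\zeta]$. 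That is a stretching, not a rotation, and has no orthogonality relation to $\eta\F(z_t)$, so the cross term genuinely remains of order $K_m\eta^4\|\F\|^4$ on your route. The paper proves (i) without going through (ii) at all: it stays in the single frame $T_{z_{t-1}}\M$, where the sharper $8K_m\eta^3\|\F(z_{t-1})\|^3$ distortion bound holds, uses the isometry of parallel transport to identify $\|G_{t+1}\|$ and $\|\eta\F(z_t)\|$ with the norms of the two transported vectors, and passes from $\|a-b\|\le 8K_m\eta^3\|\F\|^3$ to $\|a\|^2-\|b\|^2\le\|a-b\|^2 = 64K_m^2\eta^6\|\F\|^6$. (Be aware that this last inequality requires $\langle b, a-b\rangle\le 0$ and is not verified in the paper, so even the paper's constant-$64$ step is delicate; but the structural point your proposal misses is that (i) is obtained by squaring the same-frame distortion bound, not by an orthogonality argument layered on top of (ii).)
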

According to Lemma \ref{lemma:key}, the holonomy distortion turns out to be $\bo(K_m \eta^3)$, which enables us to obtain the convergence in Theorems \ref{thm: last-c} and \ref{thm: last-sc}. In the following theorems, we denote $\Upsilon = \frac{1}{5} \sigma_1 L + \frac{28}{5} (\zeta_1 - \sigma_1)L + 104(2D_1+\frac{1}{5})K_m G + 8 \sigma_1 K_m G $.
\begin{theorem}\label{thm: last-c}
Suppose Assumptions \ref{asm:start2}-\ref{asm:end2} and conditions in Lemma \ref{lemma:bdd} hold. Algorithm \ref{alg:R-OGDA} incurs the best-iterate convergence with the step size $\eta \le \min\{\frac{1}{20L},\frac{1}{8G}, \frac{\sigma_1}{2\Upsilon}\}$,
\begin{align*}
    \min_{t\le T} \| \nabla \f(z_t)\| \le \bo(\frac{1}{\sqrt{T}}),
\end{align*}
and moreover, we have $\lim_{t\to\infty}  \| \nabla \f(z_t)\| = 0$. 
\end{theorem}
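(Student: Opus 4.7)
The plan is to import the Euclidean ``extrapolation from the past'' template to the manifold via the auxiliary sequence $\hat z_{t+1} = \exp_{z_t}\bigl(\eta(-\F(z_t)+\F(z_{t-1}))\bigr)$ that is already introduced above, and to control all the resulting geometric corrections with the comparison inequalities (via $\sigma_1$ and $\zeta_1$) together with the holonomy bound of Lemma~\ref{lemma:key}. Concretely, I would work with a Lyapunov candidate of the form
\begin{align*}
\Psi_t \;:=\; d^2(\hat z_t, z^*) \;+\; c\,\eta^2 \|\F(z_{t-1})\|^2,
\end{align*}
where $c>0$ is a constant to be fixed later. The target is a one-step recursion
\begin{align*}
\Psi_{t+1} \;\le\; \Psi_t \;-\; c'\eta^2 \|\F(z_t)\|^2
\end{align*}
valid whenever $\eta$ satisfies the three stated bounds, and then telescope.

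First I would expand $d^2(\hat z_{t+1}, z^*)$ using the manifold law-of-cosines (Hessian comparison) inequality in a geodesic ball of radius $3D_1$, writing
\begin{align*}
d^2(\hat z_{t+1}, z^*) \;\le\; d^2(\hat z_t, z^*) \;+\; 2\bigl\langle \expinv{\hat z_t}{z^*},\, -G_{t+1}\bigr\rangle \;+\; \zeta_1 \|G_{t+1}\|^2,
\end{align*}
where $G_{t+1} = \expinv{\hat z_{t+1}}{\hat z_t}$. Next I would re-base the inner product at $z_t$ through parallel transport along the short geodesic from $\hat z_{t+1}$, substitute $G_{t+1}\approx \eta\,\Gamma^{\hat z_{t+1}}_{z_t}\F(z_t)$ using Lemma~\ref{lemma:key}(ii), and absorb the holonomy remainder (which is of order $K_m\eta^3\|\F(z_{t-1})\|^3$) into the $c\eta^2\|\F(z_{t-1})\|^2$ slack of the Lyapunov function; the $\eta \le 1/(8G)$ bound ensures that $\|\F(z_{t-1})\|\eta$ stays small enough to turn the cubic into a linear-in-$\eta$ error times $\|\F(z_{t-1})\|^2$. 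The Hessian contribution $\zeta_1 \|G_{t+1}\|^2$ is bounded by $\sigma_1 \eta^2 \|\F(z_t)\|^2 + (\zeta_1-\sigma_1)\eta^2\|\F(z_t)\|^2$ plus the analogous holonomy tail from Lemma~\ref{lemma:key}(i).

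After this substitution the cross term becomes $-2\eta\langle \F(z_t), \expinv{z_t}{z^*}\rangle$ (plus controlled errors); monotonicity of $\F$ coming from g-convexity-concavity, $\langle -\F(z_t), \expinv{z_t}{z^*}\rangle \ge 0$, makes this term $\le 0$, so the only ``bad'' contribution to the recursion is the mismatch $\eta^2(\|\F(z_t)\|^2 + \|\F(z_{t-1})\|^2$-type terms) coming from expanding $G_{t+1}$ and from $\|\F(z_t)-\Gamma_{z_{t-1}}^{z_t}\F(z_{t-1})\| \le L\,d(z_t,z_{t-1})$, where smoothness converts the difference into $L\eta$ times norms of past gradients. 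Collecting every coefficient on $\|\F(z_t)\|^2$ and $\|\F(z_{t-1})\|^2$, the constant $\Upsilon = \tfrac{1}{5}\sigma_1 L + \tfrac{28}{5}(\zeta_1-\sigma_1)L + 104(2D_1+\tfrac{1}{5})K_m G + 8\sigma_1 K_m G$ is exactly what emerges, and the threshold $\eta \le \sigma_1/(2\Upsilon)$ is what renders the coefficient of $\|\F(z_t)\|^2$ strictly negative, giving the desired contraction. Telescoping $\Psi_1-\Psi_{T+1} \ge c'\eta^2\sum_{t=1}^{T}\|\F(z_t)\|^2$, bounding $\Psi_1 \le (2D_1)^2 + c\eta^2 G^2$ via Lemma~\ref{lemma:bdd}, and dividing by $T$ yields $\min_{t\le T}\|\F(z_t)\|^2 \le \bo(1/T)$. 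For the $\lim_{t\to\infty}\|\nabla \f(z_t)\|=0$ statement, I would observe that the recursion implies $\sum_{t=1}^\infty \|\F(z_t)\|^2 < \infty$, and combine this with the smoothness estimate $\big|\|\F(z_{t+1})\|-\|\F(z_t)\|\big| \le L\,d(z_{t+1},z_t) = \bo(\eta\|\F(z_t)\|)$ through a standard summable-increment lemma to conclude $\|\F(z_t)\|\to 0$.

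The hard part, as signalled by the discussion preceding Lemma~\ref{lemma:key}, is the holonomy bookkeeping: because $\eta\F(z_t)$ lives in $T_{z_t}\M$ while $G_{t+1}$ lives in $T_{\hat z_{t+1}}\M$, the comparison inequalities alone cannot replace one by the other, so the whole proof hinges on using Lemma~\ref{lemma:key} at every place where the Euclidean analysis would use identity~$(**)$, and carefully verifying that the cubic-in-$\eta\|\F\|$ errors never overpower the $\bo(\eta^2\|\F(z_t)\|^2)$ contraction --- this is precisely why the step size must simultaneously be below $1/(20L)$ (for Lemma~\ref{lemma:key}), $1/(8G)$ (to keep $\|\F\|\eta$ small so cubic errors are dominated), and $\sigma_1/(2\Upsilon)$ (to close the coefficient comparison).
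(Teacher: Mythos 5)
Your proposal follows essentially the same route as the paper's proof: the same auxiliary sequence $\hat z_t$, a Lyapunov function of the same shape (the paper uses $\phi_{t+1} = d^2(\hat z_{t+1},z^*) + \sigma_1 d^2(z_{t+1},\hat z_{t+2})$, whose second term equals $\sigma_1\eta^2\|\F(z_{t+1})-\Gamma_{z_t}^{z_{t+1}}\F(z_t)\|^2$ and is interchangeable with your $c\,\eta^2\|\F(z_{t-1})\|^2$ slack via the bound $d(z_{t+1},\hat z_{t+2})\le 2\eta\|\F(z_t)\|$ and the gradient-norm ratio lemma), the same comparison-inequality expansion with Lemma~\ref{lemma:key} absorbing the $\bo(K_m\eta^3\|\F\|^3)$ holonomy corrections, monotonicity of $\F$ from g-convexity-concavity to discard the cross term, and telescoping to $\sum_t \|\F(z_t)\|^2 \le 2D_1^2/(\sigma_1\eta^2)$. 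The only cosmetic difference is your closing summable-increment argument for $\lim_{t\to\infty}\|\F(z_t)\|=0$, which is unnecessary (nonnegative summable terms already tend to zero, which is how the paper concludes) but harmless.
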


\begin{theorem}\label{thm: last-sc}
Suppose Assumptions \ref{asm:start2}-\ref{asm:end2} and conditions in Lemma \ref{lemma:bdd} hold and $\f$ is $\mu$-g-strongly convex-strongly concave. Recall $z^*$ is the unique NE of the game \eqref{eq:RZS}. Algorithm \ref{alg:R-OGDA} with the step size\\ $\eta \le \min\{\frac{1}{20L},\frac{1}{8G}, \frac{\sigma_1}{\Upsilon +4\mu + 8\sigma_1 \mu}\}$ incurs
\begin{align*}
    d^2(z_t,z^*) \le \Big(\frac{1}{1 + \eta \mu/ 2 } \Big)^t \big(2(1+\frac{1}{\sigma_1}) d^2(z_1,z^*)\big).
\end{align*}
\end{theorem}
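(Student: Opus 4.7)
The plan is to adapt the Euclidean OGDA last-iterate argument of Mokhtari et al.\ to the Riemannian setting by working with the auxiliary sequence $\hat z_{t+1}$ (so that $G_{t+1} = \expinv{\hat z_{t+1}}{\hat z_t}$ approximates $\eta\,\Gamma_{z_t}^{\hat z_{t+1}}\F(z_t)$ by Lemma \ref{lemma:key}) and absorbing the resulting holonomy distortion through the step-size restriction. The target is a one-step contraction of a Lyapunov potential of the form $\Phi_t = d^2(\hat z_t, z^*) + c\,\eta^2\|\F(z_{t-1})\|^2$ satisfying $\Phi_{t+1}\le (1+\eta\mu/2)^{-1}\Phi_t$. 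Telescoping then yields the geometric rate; the prefactor $2(1+1/\sigma_1)$ in the statement arises only at the end, when one converts $d^2(\hat z_1, z^*)$ back to $d^2(z_1, z^*)$ via the $\sigma_1$-comparison inequality and the initialization $d(\hat z_1, z_1)\le \eta\|\F(z_0)\|$.

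To derive the contraction, I would apply the Riemannian law of cosines to the geodesic triangle $(\hat z_t, \hat z_{t+1}, z^*)$ (which lies inside $\K$ by Lemma \ref{lemma:bdd} and Assumption \ref{asm:end2}), obtaining
\begin{align*}
d^2(\hat z_{t+1}, z^*) \le d^2(\hat z_t, z^*) - 2\langle G_{t+1}, \expinv{\hat z_{t+1}}{z^*}\rangle + \zeta_1 \|G_{t+1}\|^2.
\end{align*}
Lemma \ref{lemma:key}(i) bounds $\|G_{t+1}\|^2$ by $\eta^2\|\F(z_t)\|^2 + \bo(K_m^2\eta^6 G^6)$, while Lemma \ref{lemma:key}(ii), together with another comparison inequality swapping $\Gamma_{\hat z_{t+1}}^{z_t}\expinv{\hat z_{t+1}}{z^*}$ with $\expinv{z_t}{z^*}$ at a Hessian-distortion cost of order $(\zeta_1-\sigma_1)\,d(z_t,z^*)$, converts the cross term into
\begin{align*}
-2\langle G_{t+1}, \expinv{\hat z_{t+1}}{z^*}\rangle \le -2\eta\langle \F(z_t), \expinv{z_t}{z^*}\rangle + E_t,
\end{align*}
where $E_t$ collects the Hessian and holonomy errors. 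Strong g-convex-concavity via \eqref{eq: strong-cc} then delivers the contractive term $-2\eta\langle \F(z_t), \expinv{z_t}{z^*}\rangle \le -\eta\mu\, d^2(z_t, z^*)$.

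Assembling, bounding $\|\F(z_t)\|\le L\,d(z_t, z^*)$ by smoothness, and choosing $c$ in $\Phi_t$ to absorb the residual smoothness quadratic, I would reach an inequality of the form
\begin{align*}
\Phi_{t+1} + \tfrac{\eta\mu}{2}\, d^2(\hat z_{t+1},z^*) \le \Phi_t - \bigl(\eta\mu - \zeta_1\eta^2 L^2 - \Upsilon\eta\bigr) d^2(z_t, z^*),
\end{align*}
where $\Upsilon$ precisely collects the Hessian-distortion constants proportional to $(\zeta_1-\sigma_1)L$ and the holonomy-distortion constants proportional to $K_m G$ coming out of Lemma \ref{lemma:key}. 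The step-size condition $\eta\le \sigma_1/(\Upsilon+4\mu+8\sigma_1\mu)$ is calibrated exactly so that the bracketed coefficient is nonnegative, rearranging to $\Phi_{t+1}(1+\eta\mu/2)\le\Phi_t$. The hardest step is controlling the genuinely non-Euclidean cubic holonomy distortion from Lemma \ref{lemma:key}(ii): it has no analog in the Euclidean OGDA proof and must be dominated pointwise (not just telescoped) by the contractive gain $\eta\mu\, d^2(z_t, z^*)$. This is only possible because $\|\F\|\le G$ is bounded and the regime $\eta = \bo(\sigma_1/K_m)$ forces the perturbation to be $\bo(\eta^3)$, strictly lower-order than the $\bo(\eta\mu)$ gain, so the balancing through $\Upsilon$ closes the recursion.
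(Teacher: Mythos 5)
Your overall architecture matches the paper's: the auxiliary sequence $\hat z_{t+1}$, a Lyapunov potential combining $d^2(\hat z_{t+1},z^*)$ with a gradient-type correction, Lemma \ref{lemma:key} for the holonomy distortion, strong monotonicity \eqref{eq: strong-cc} to generate the $(1+\eta\mu/2)^{-1}$ factor, and a final conversion producing the prefactor $2(1+\tfrac{1}{\sigma_1})$. But the core absorption step is miscalibrated, and as written the recursion does not close. You apply the law of cosines with a \emph{positive} $+\zeta_1\|G_{t+1}\|^2$ and then try to dominate all error terms --- including the distortions of order $\Upsilon\eta^3\|\F(z_t)\|^2$ --- pointwise by the strong-convexity gain $\eta\mu\,d^2(z_t,z^*)$, ending with the bracket $\eta\mu-\zeta_1\eta^2L^2-\Upsilon\eta$. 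No step size makes this nonnegative: both $\eta\mu$ and $\Upsilon\eta$ scale linearly in $\eta$, and $\Upsilon\ge\tfrac{1}{5}\sigma_1L+\cdots$ exceeds $\mu\le L$ in general, so your claim that the hypothesis $\eta\le\sigma_1/(\Upsilon+4\mu+8\sigma_1\mu)$ is ``calibrated exactly'' for this bracket is false. Even granting $\|\F(z_t)\|\le L\,d(z_t,z^*)$ so that the cubic distortion compares to the gain, absorption would require $\Upsilon\eta^2L^2\le\mu$, i.e.\ $\eta\lesssim\sqrt{\mu}/(L\sqrt{\Upsilon})$ --- a genuinely $\mu$-dependent restriction not implied by the stated step size (which stays of order $\sigma_1/\Upsilon$ as $\mu\to0$ while your requirement tends to $0$).

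The paper closes the recursion differently: applying Lemma \ref{lemma:sigma} at the vertex $\hat z_{t+2}$ yields the \emph{negative} quadratic $-\sigma_1\|G_{t+2}\|^2\approx-\sigma_1\eta^2\|\F(z_{t+1})\|^2$, and it is this term --- not the strong-convexity gain --- that swallows every $\bo(\eta^3\|\F\|^2)$ and $\bo(\eta^3\|\F\|^3)$ distortion coming from Lemma \ref{lemma:key} and the Hessian operator of Lemma \ref{lemma:H}; this is exactly why $\sigma_1$ sits in the numerator of the step-size bound and why the condition is essentially $\mu$-free at leading order. Strong monotonicity is invoked only once, via $-\mu\eta\,d^2(z_{t+1},z^*)\le\mu\eta\,d^2(z_{t+1},\hat z_{t+2})-\tfrac{\mu\eta}{2}d^2(\hat z_{t+2},z^*)$, and the two extra terms this generates, bounded through $d(z_{t+1},\hat z_{t+2})\le2\eta\|\F(z_t)\|$, are the precise source of the $4\mu+8\sigma_1\mu$ in the denominator, which your sketch leaves unexplained. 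Two further slips: the paper's correction term is $\sigma_1\eta^2\|\F(z_{t+1})-\Gamma_{z_t}^{z_{t+1}}\F(z_t)\|^2$, a gradient \emph{difference} of order $\eta^4$ by smoothness that merely rides along, whereas your $c\,\eta^2\|\F(z_{t-1})\|^2$ uses the full gradient norm, which cannot be contracted by the norm-ratio bound of Lemma \ref{lemma:norm} (the ratio may be $3/2$) and again forces a $\mu$-dependent step size; and your line $-2\eta\langle\F(z_t),\expinv{z_t}{z^*}\rangle\le-\eta\mu\,d^2(z_t,z^*)$ has the wrong sign, since by \eqref{eq: strong-cc} the left-hand side is $\ge\eta\mu\,d^2(z_t,z^*)\ge0$; the usable direction is $2\eta\langle\F(z_t),\expinv{z_t}{z^*}\rangle\le-\eta\mu\,d^2(z_t,z^*)$, consistent with the $+2\langle G_{t+2},\expinv{\hat z_{t+2}}{z^*}\rangle$ cross term produced by the $\sigma$-comparison.
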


The proofs for Theorems \ref{thm: last-c} and \ref{thm: last-sc} can be found in Appendix \ref{app:last}. Theorem \ref{thm: last-c} demonstrates the $\bo(\frac{1}{\sqrt{T}})$ convergence rate for the best iterate of g-convex-concave games, which is the first proven result in Riemannian NE seeking algorithms for g-convex-concave games. Moreover, Theorem \ref{thm: last-sc} establishes a linear convergence rate for the last-iterate of the ROGDA algorithm, matching those in the RCEG algorithm and the second-order RHM algorithm \citep{han2022riemannian,NEURIPS2022_2ad9a1a6} in the g-strongly convex-strongly concave setting, while requiring only one first-order information in each iteration.

\section{Numerical Experiments}
 In this section, we presents several numerical experiments to validate our theoretical findings regarding Riemannian online optimization problems and Riemannian zero-sum games. We conduct experiments on both synthetic and real-world datasets, and compare the performance of our proposed algorithm with state-of-the-art methods in the literature. We implement our algorithm using the Pymanopt package \cite{JMLR:v15:boumal14a} and conduct all experiments in Python 3.8 on a machine with an AMD Ryzen5 processor clocked at 3.4 GHz and 16GB RAM. To ensure reproducibility of our results, we provide access to all source codes online\footnote{\url{https://github.com/RiemannianOCO/DynamicReg}}.

 \subsection{Online Fr\'echet Mean in the Hyperbolic Space}

The Fr\'echet mean problem, also known as finding the Riemannian centroid of a set of points on a manifold, has numerous applications in various fields, including diffusion tensor magnetic resonance imaging (DT-MRI) \citep{cheng2012efficient} and hyperbolic neural network \citep{liu2019hyperbolic}. We focus on the online version of the Fr\'echet mean problem, which aims to compute the average of $N$ time-variant points in a hyperbolic space. Hyperbolic space is a Riemannian manifold with constant negative sectional curvature -1, defined as 
\begin{align*}
    H^n = \{ x \in \R^{n+1} | \langle x, x \rangle_M = -1 \},
\end{align*} where the Minkowski dot product 
\begin{align*}
    \langle x,y \rangle_M = \sum_{i=1}^n x_i y_i - x_{n+1}y_{n+1}
\end{align*} defines the metric $\langle x,y \rangle_p = \langle x,y \rangle_M$. The loss function $\ft$ of the online Fr\'echet mean problem is given by 
\begin{align*}
    \ft(x_t) = \frac{1}{2N} \sum_{i=1}^{N} d^2(x_t,A_{t,i}) = \frac{1}{2N} \sum_{i=1}^{N} \cosh^{-1}(-\langle {x_t}, A_{t,i} \rangle_M)^2,
    \end{align*} where $\{ A_{t,1},A_{t,2},\dots,A_{t,N} \}$ are the time-variant points in the hyperbolic space.

   \paragraph{Experimental setting} We test the online Fr\'echet mean on synthetic datasets generated as follows: In each iteration $t$, we randomly sample the point $A_{i,t}$ from a ball centered at a point $P_t$ with a radius of $c=1$. The choice of $P_t$ is in following two ways to simulate non-stationary environments. 1) $P_t$ remains fixed in between $S$ time steps. After every $S$ rounds, we re-select $P_t$ randomly within a bounded set of diameter $D=1$ to simulate abrupt changes in the environment. 2) $P_t$ shifts a small distance of $0.1$ between each time step, and is re-selected after $S$ round, to simulate a slowly evolving environment. For our experiment, we set $T=10000$, $n=100$, $d=20$, $D=1,\kappa =1$, and $L=\zeta(\kappa,D)$.

    In addition, we compare our R-OOGD and R-AOOGD algorithms to other contenders in Riemannian online optimization. Specifically, we compare R-OOGD with the Riemannian online gradient descent (ROGD) algorithm \citep{wang2023online} and the EG-type expert algorithm R-OCEG \citep{hu2023minimizing}. We also compare our meta-expert algorithm R-AOOGD with the meta-expert RADRv algorithm \citep{hu2023minimizing}.

    \paragraph{Result} We examine the performance in terms of cumulative loss and present the result in Fig \ref{fig: fre}. First, we can see that OGD suffers from a high cumulative loss throughout the horizon. Conversely, our methods, as well as R-OCEG and RADRv, demonstrate satisfactory performance in terms of dynamic regret in both situations. As meta-expert algorithms, our R-AOOGD slightly outperforms the RARDv. Our R-OOGD algorithm performs comparably to RARDv-exp, but with only half number of the gradient information required. These facts validate the effectiveness of our algorithms and demonstrate our advantage in situations where gradient computation is time-consuming.

    \begin{figure}[htbp]\label{fig: fre}
		\centering 
		\subfigure[Stationary environment with abrupt changes]{
			\begin{minipage}[t]{0.4\linewidth}
				\centering
				\includegraphics[width=\linewidth]{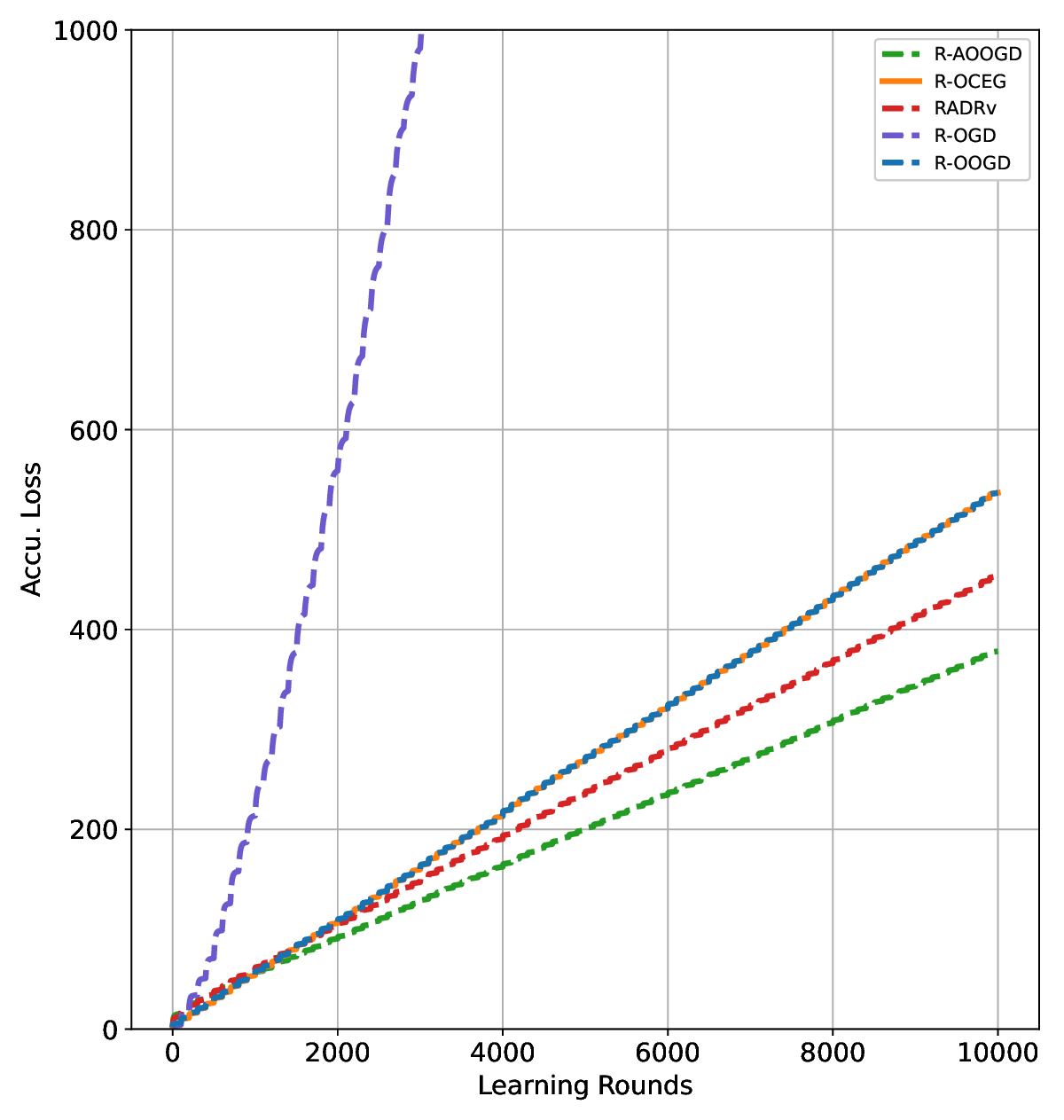}
			\end{minipage}%
        }
		\subfigure[Slowly evolving environment]{
			\begin{minipage}[t]{0.4\linewidth}
				\centering
				\includegraphics[width=\linewidth]{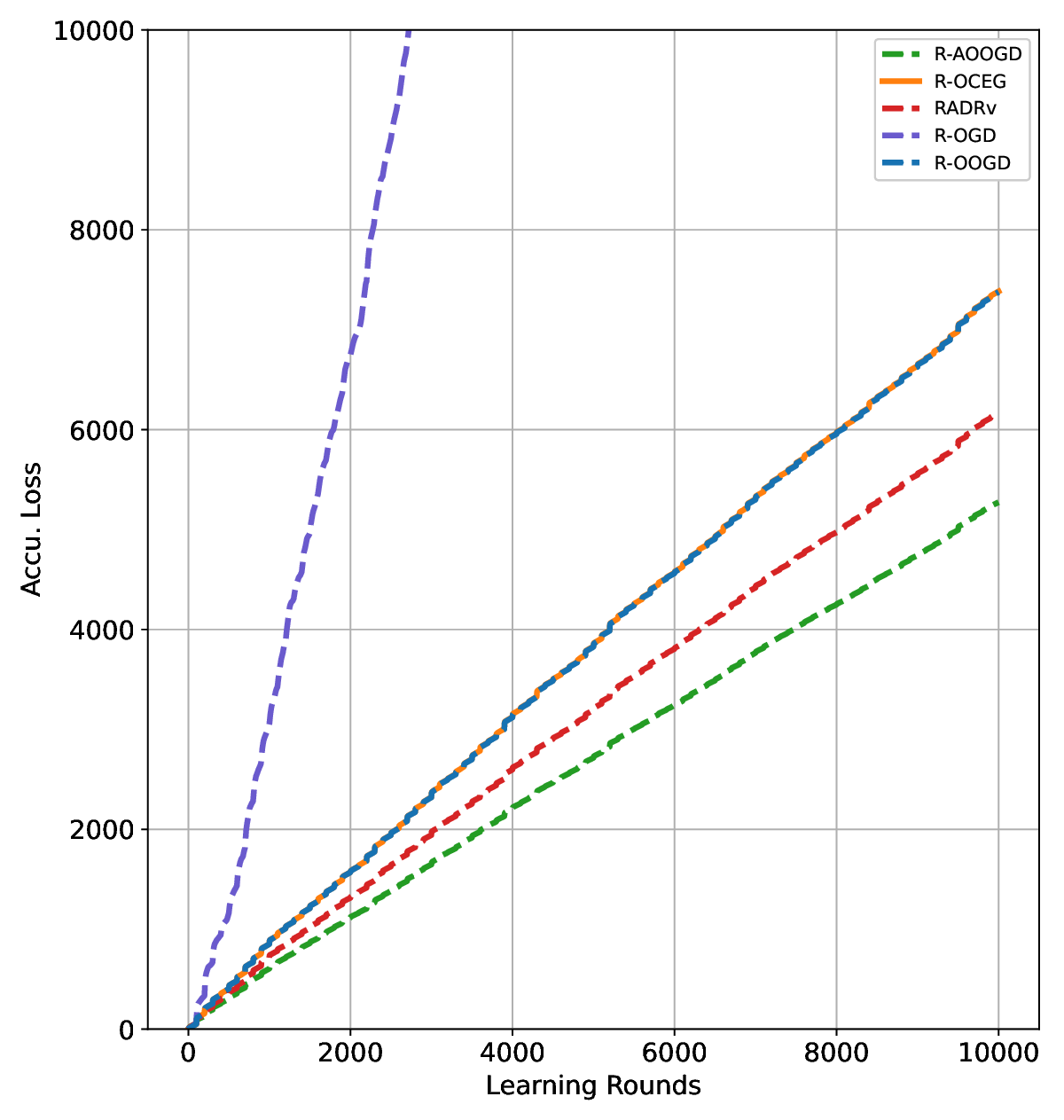}
			\end{minipage}%
        }
		\caption{Algorithm performance on hyperbolic Fr\'echet mean problems}
		\label{fig:FM}
	\end{figure} 

 \subsection{Online Geodesic Regression}\label{exp: ogr}
   Geodesic Regression is a statistical model that generalizes the Euclidean least-square regression by modeling data as
\begin{align*}
y = \exp_{\hat y_t}(\epsilon), \quad \hat y_t = \exp_{p}(x V),
\end{align*}
where $x\in \R$ is the feature, $y\in \M$ is the manifold-valued label, $V$ is a tangent vector at $P$, and $\epsilon$ is a Gaussian-distributed error. Geodesic Regression has many applications in medical imaging, object recognition, and linear system identification 
\citep{yang2016multivariate,shin2022robust,hong2014geodesic}.

We can also consider an online form of geodesic regression, where the model is trained sequentially. For each data point ${x_t, y_t}$, the online geodesic regression model minimizes the loss function
\begin{align*}
\ft(p_t,V_t) = \frac{1}{2} d^2(\hat y_t, y_t) = \frac{1}{2} d^2(\exp_{p_t}(x_t V_t), y_t),
\end{align*}
where $(p_t,V_t) \in T\M$ lies in the tangent bundle $T\M$ with the Sasaki metric \citep{muralidharan2012sasaki},
\begin{align*}
\big\langle (p^\prime_1,V^\prime_1),(p^\prime_2,V^\prime_2) \big\rangle = \langle p^\prime_1, p^\prime_2 \rangle_{\M} + \langle V^\prime_1, V^\prime_2 \rangle_{T_P\M}.
\end{align*}

\paragraph{Experimental setting} We now conduct our experiments on both synthetic and real-world dataset. The synthetic dataset is generated on a five-dimensional sphere $\mathbb S^5$. At each round, the feature $x_t$ is uniformly sampled from $[0,1]$, and the label $y_t$ is obtained as $y_t = \exp_{\hat y_t}(\epsilon_t),$ where $\hat y_t = \exp_{p_t}(x_t V_t)$ and $\epsilon_t$ is a random tangent vector with norm chosen uniformly from $[0,0.1]$. Similar to the previous experiment, we fix $p_t$ and $V_t$ for $S$ rounds and randomly select new $p_t$ and $V_t$ from the half sphere after every $S$ rounds. 

The real-world dataset used in this experiment is the \textit{corpus callosum dataset} from the Alzheimer's disease neuroimaging initiative, which was provided by \cite{cornea2017regression} and can also be accessed online \footnote{\url{http://www.bios.unc.edu/research/bias/software.html}}. The dataset includes information about 408 subjecst as well as shape of the subjects' corpus callosum obtained from mid-sagittal slices of magnetic resonance images (MRI).  The shape of an corpus callosum is described as a cloud point matrix $y_t \in \R^{50 \times 2}$. During the experiment, we aimed to analyze the relationship between the age of the subjects and the shape of the corpus callosum by utilizing geodesic regression. To achieve this, we preprocessed the shape information into a Grassmann manifold $ \mathcal{G}r(50,2)$ by computing the left-singular vectors of each $y_t$ as reported in a previous study by \cite{hong2014geodesic}. Then, we divided the $320$ data points into a training set and others to a testing set. Additionally, we duplicated training data points 5 times for efficient learning rounds.

\paragraph{Results} Figure \ref{fig: georeg} shows the accuracy and loss versus learning round for our algorithm. Additionally, we provide performance results on the test set of the corpus callosum shapes in Figure \ref{fig: test set}, which confirms the effectiveness of our R-OOGD algorithm and R-AOOGD algorithm in solving real-world problems. Our algorithm performs similarly or better than other state-of-the-art methods on both synthetic and real-world datasets, while requiring fewer gradient information. This finding aligns with our theoretical results.

It is noteworthy that we did not impose any special requirements on the boundedness of the training set in the real-world dataset. Therefore, our algorithm iterates over the entire Grassmann manifold with a diameter of $\frac{\sqrt{2}}{2}\pi>\frac{\pi}{2\sqrt{K}} = \frac{\pi}{2\sqrt{2}}$. This demonstrates the applicability of our algorithm when the diameter $D\ge \frac{\pi}{2\sqrt{K}}$. 

    \begin{figure}[htbp]\label{fig: georeg}
		\centering 
		\subfigure[Synthetic data]{
			\begin{minipage}[t]{0.45\linewidth}
				\centering
				\includegraphics[width=\linewidth]{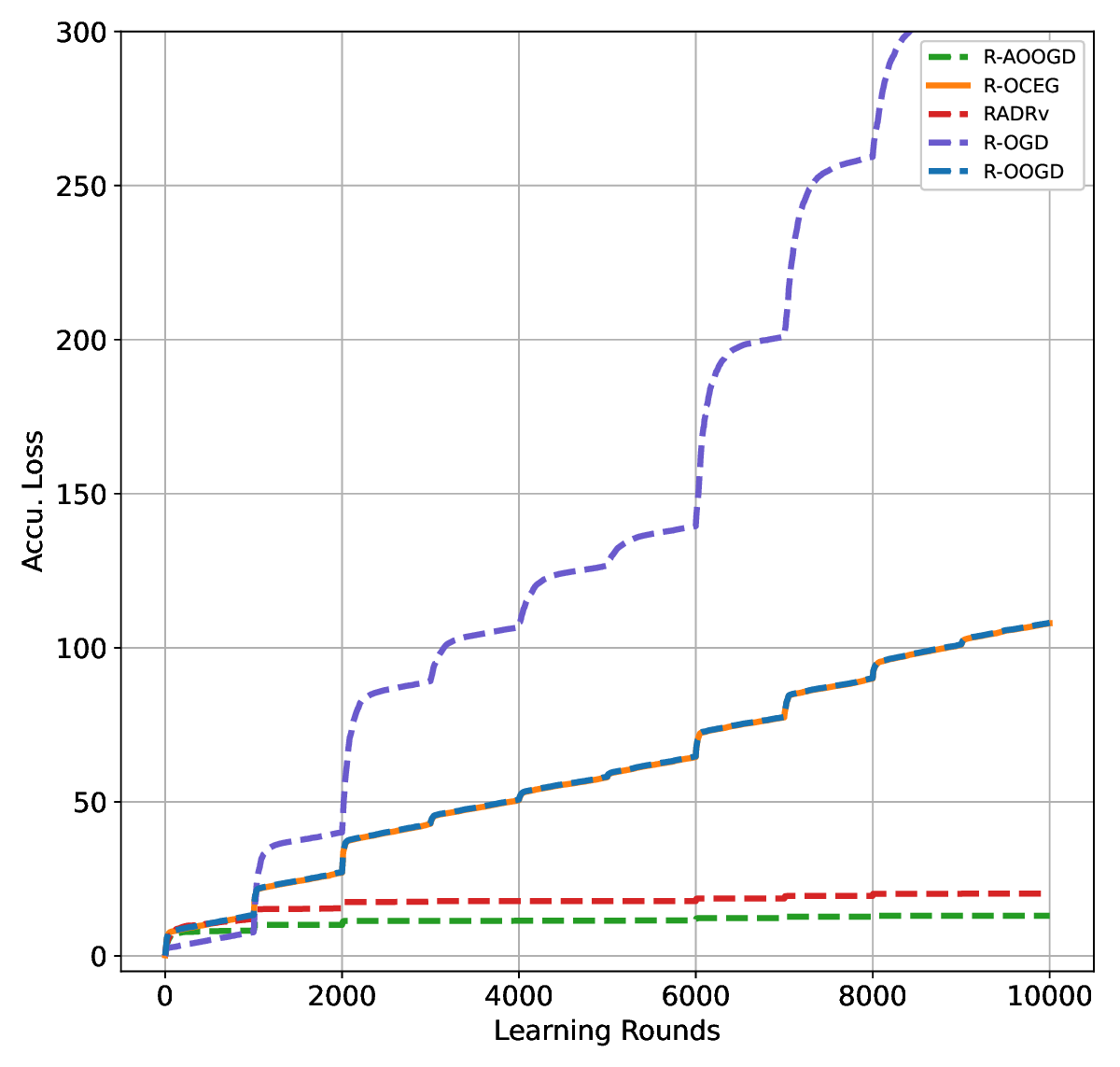}
			\end{minipage}%
        }
		\subfigure[Real-world data]{
			\begin{minipage}[t]{0.45\linewidth}
				\centering
				\includegraphics[width=\linewidth]{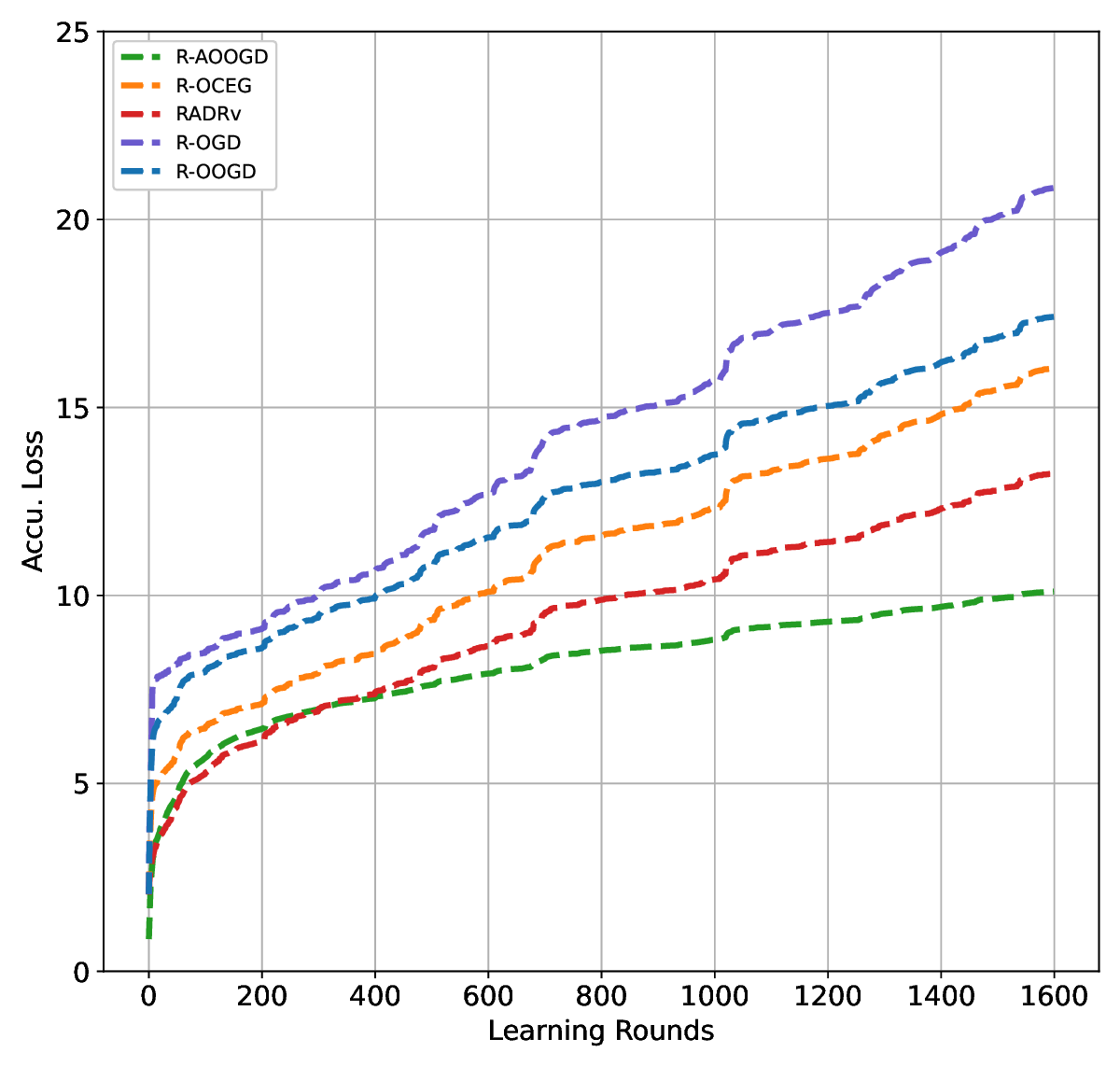}
			\end{minipage}%
        }
		\caption{Algorithm performance on geodesic regression}
	\end{figure} 

    \begin{figure}[htbp]\label{fig: test set}
		\centering 
		\subfigure[age=64]{
			\begin{minipage}[t]{0.2\linewidth}
				\centering
				\includegraphics[width=\linewidth]{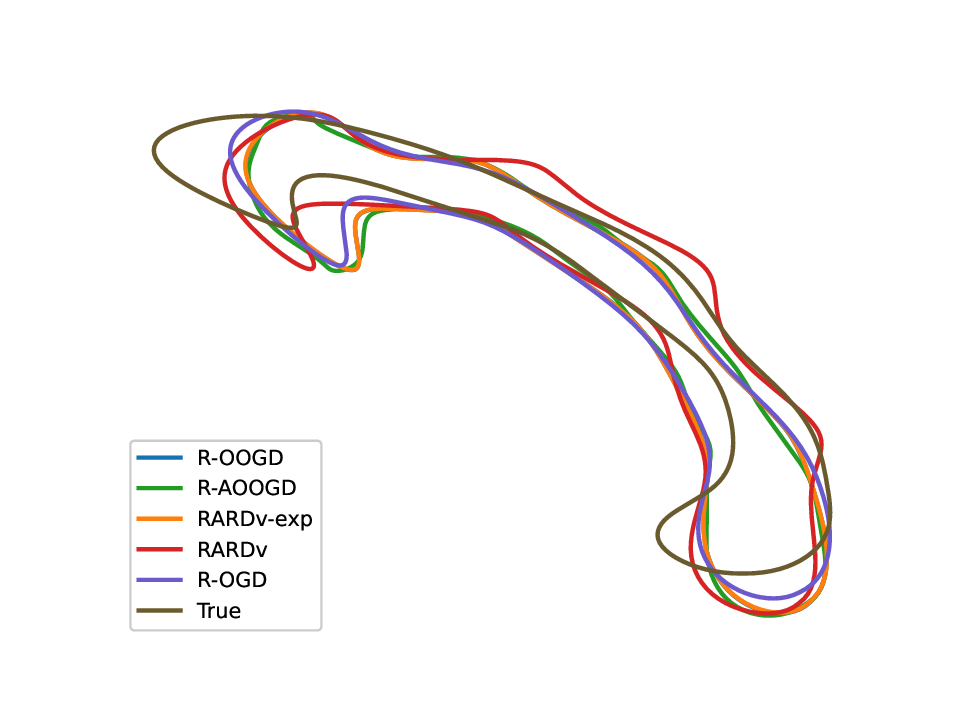}
			\end{minipage}%
        }\subfigure[age=66]{
			\begin{minipage}[t]{0.2\linewidth}
				\centering
				\includegraphics[width=\linewidth]{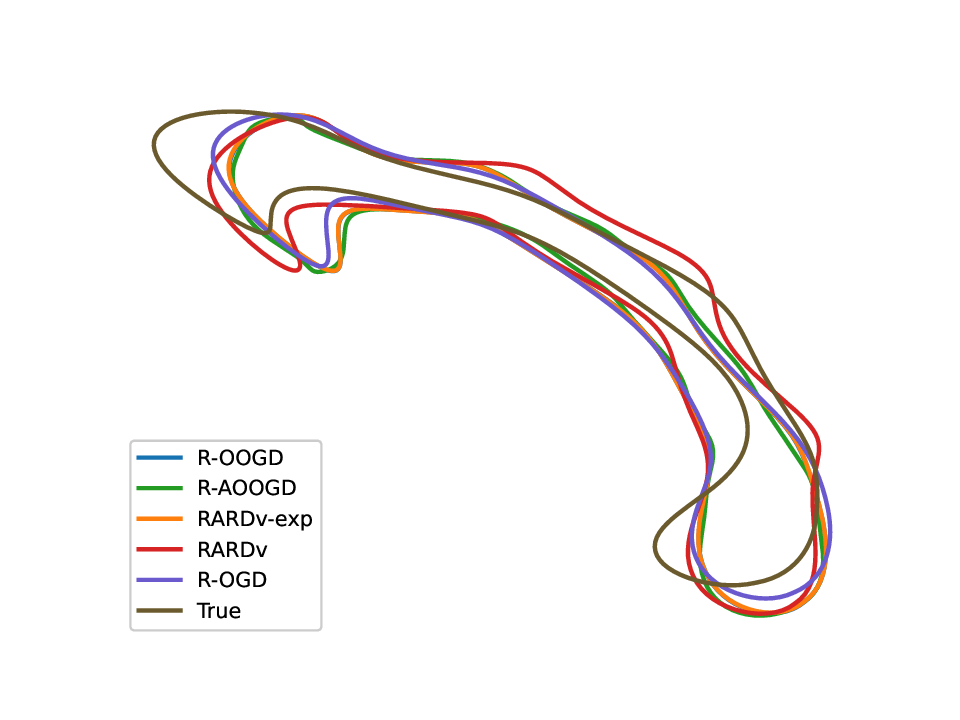}
			\end{minipage}%
        }\subfigure[age=69]{
			\begin{minipage}[t]{0.2\linewidth}
				\centering
				\includegraphics[width=\linewidth]{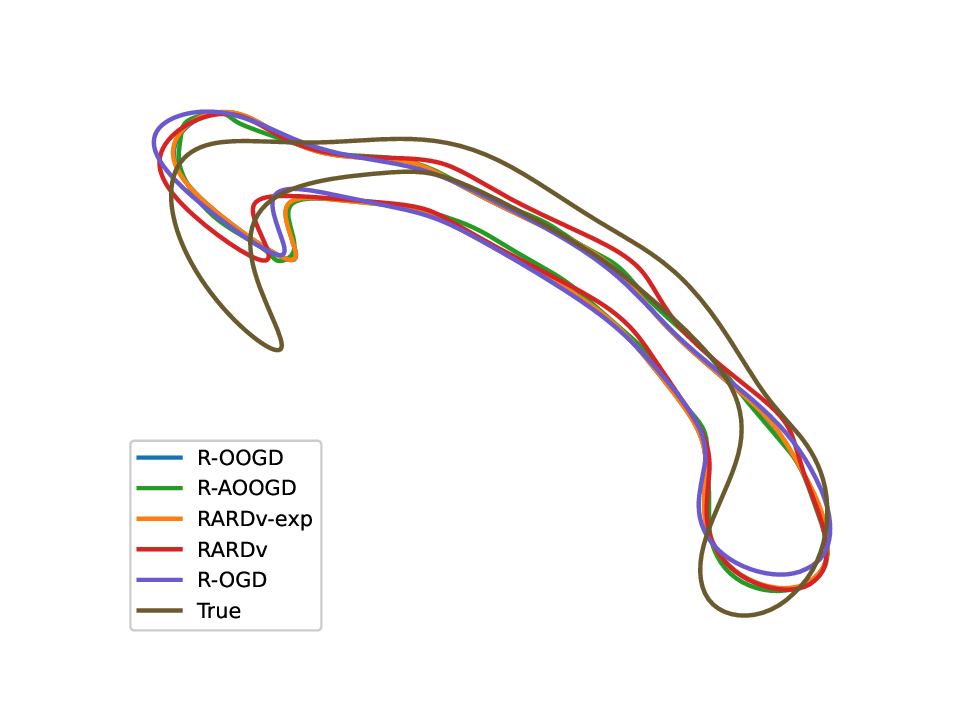}
			\end{minipage}%
        }\subfigure[age=70]{
			\begin{minipage}[t]{0.2\linewidth}
				\centering
				\includegraphics[width=\linewidth]{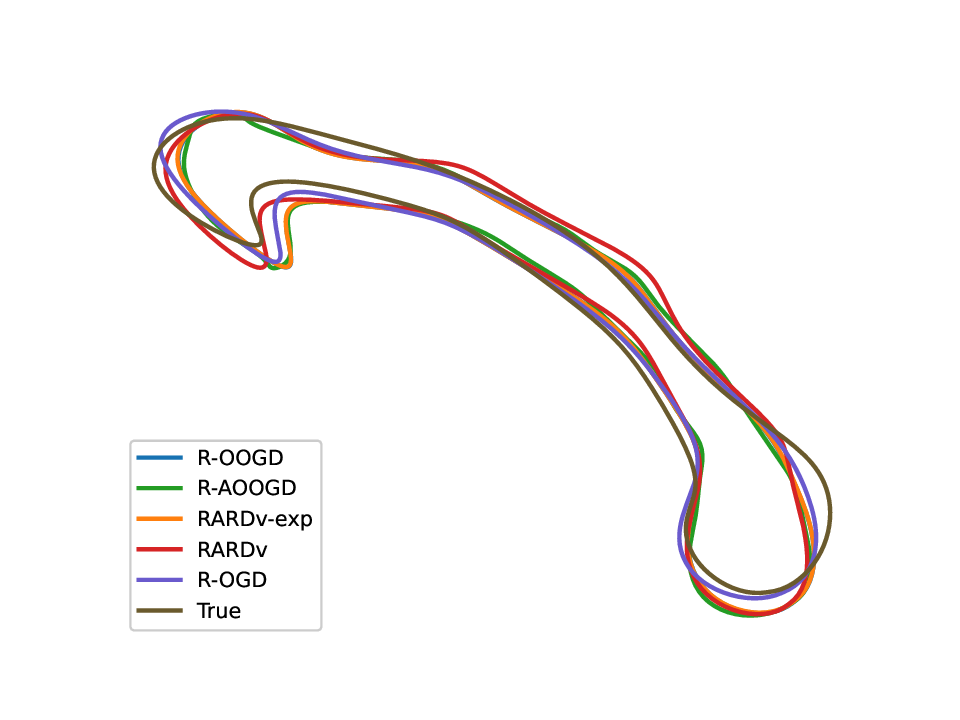}
			\end{minipage}}\\
            \subfigure[age=71]{
			\begin{minipage}[t]{0.2\linewidth}
				\centering
				\includegraphics[width=\linewidth]{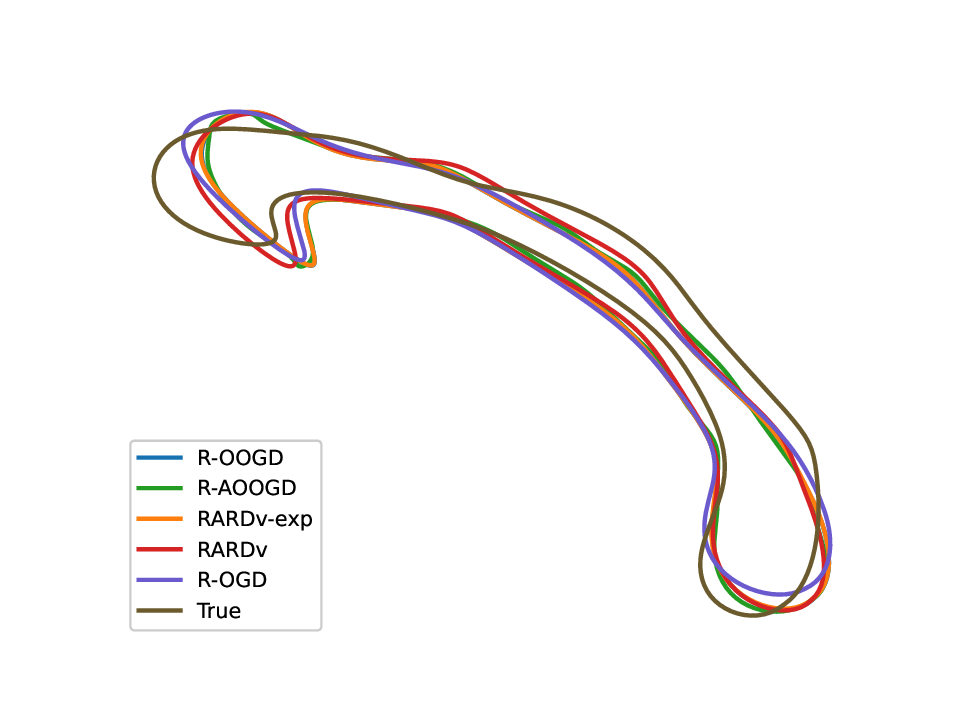}
			\end{minipage}
            }\subfigure[age=73]{
			\begin{minipage}[t]{0.2\linewidth}
				\centering
				\includegraphics[width=\linewidth]{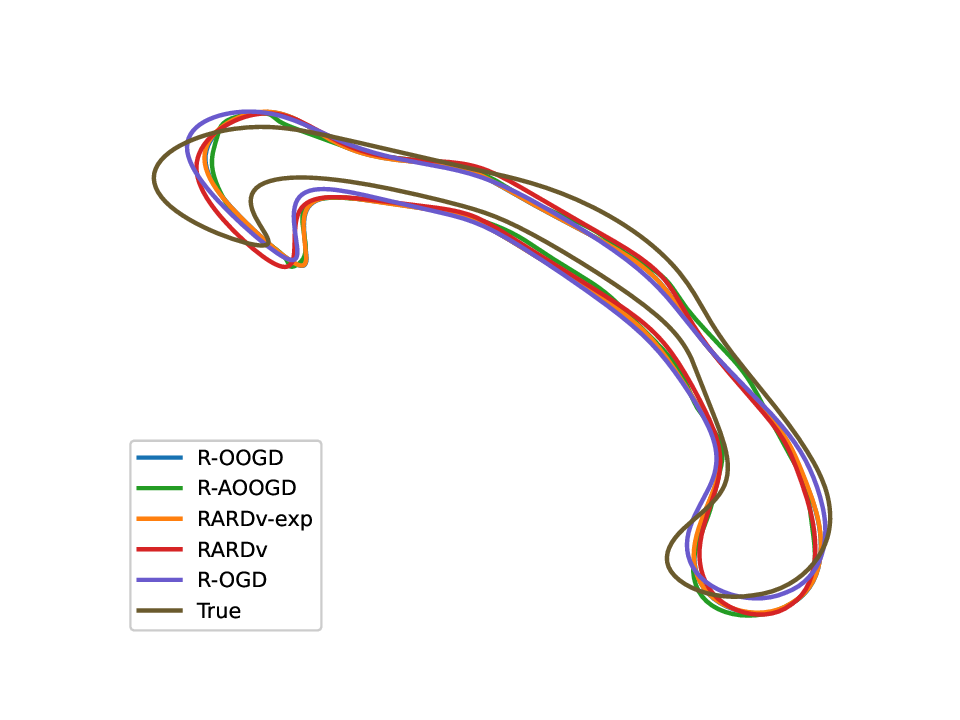}
			\end{minipage}
            }\subfigure[age=75]{
			\begin{minipage}[t]{0.2\linewidth}
				\centering
				\includegraphics[width=\linewidth]{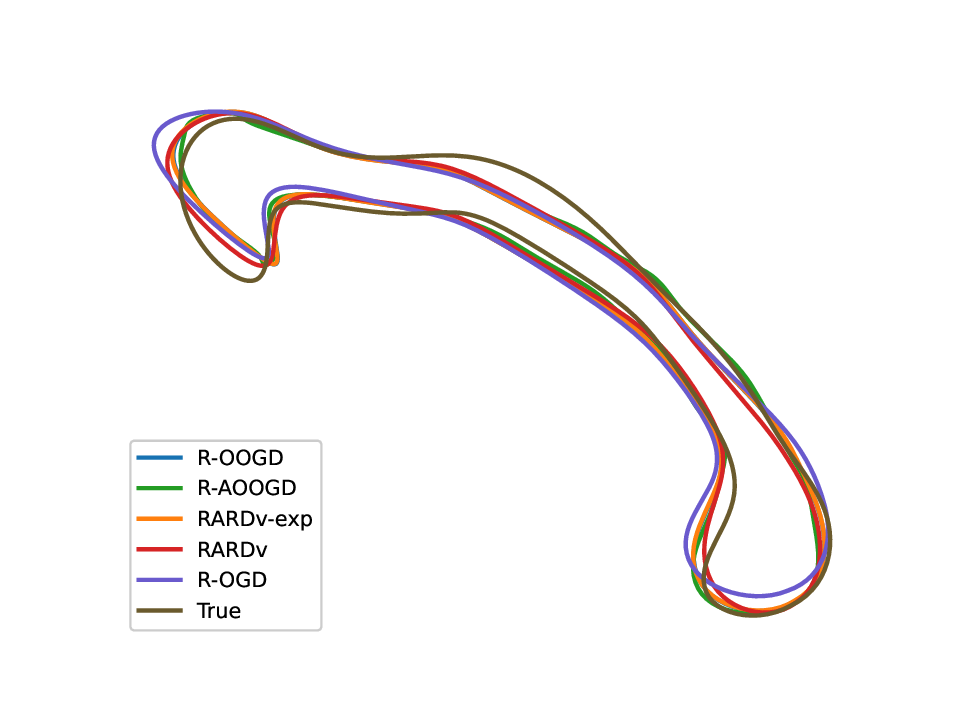}
			\end{minipage}
        }\subfigure[age=77]{
			\begin{minipage}[t]{0.2\linewidth}
				\centering
				\includegraphics[width=\linewidth]{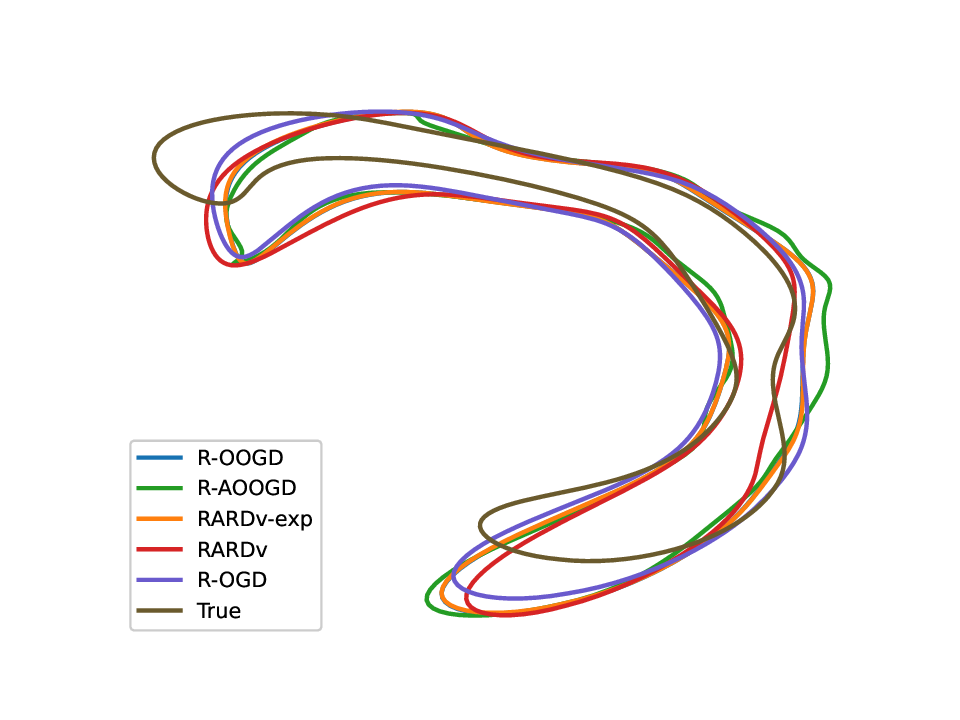}
			\end{minipage}%
        }\\
        \subfigure[age=80]{
			\begin{minipage}[t]{0.2\linewidth}
				\centering
				\includegraphics[width=\linewidth]{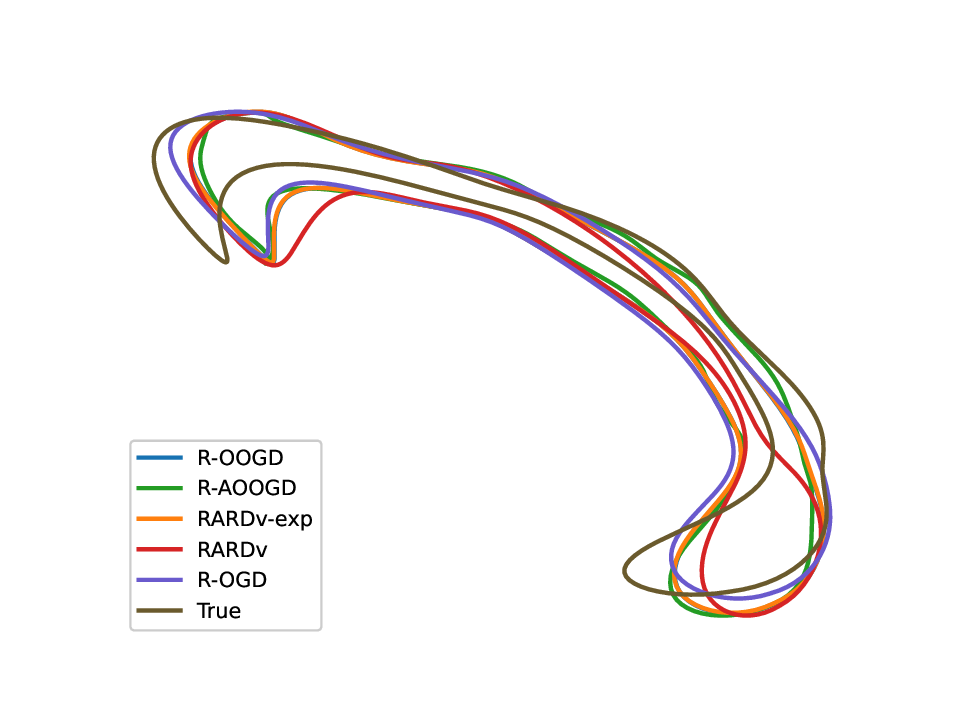}
			\end{minipage}
            }\subfigure[age=83]{
			\begin{minipage}[t]{0.2\linewidth}
				\centering
				\includegraphics[width=\linewidth]{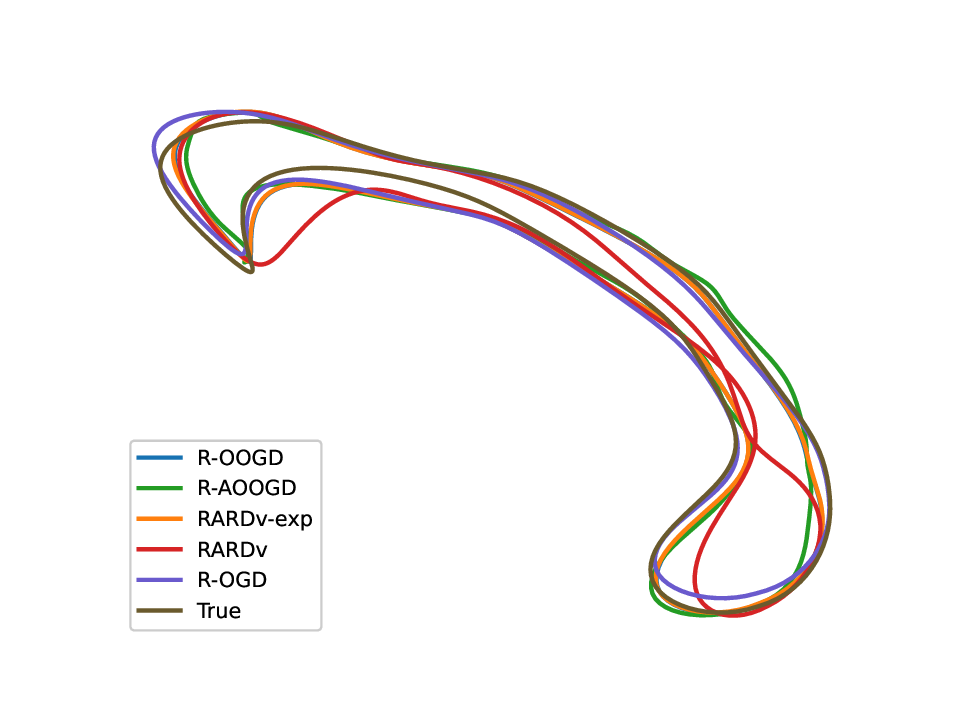}
			\end{minipage}
            }\subfigure[age=88]{
			\begin{minipage}[t]{0.2\linewidth}
				\centering
				\includegraphics[width=\linewidth]{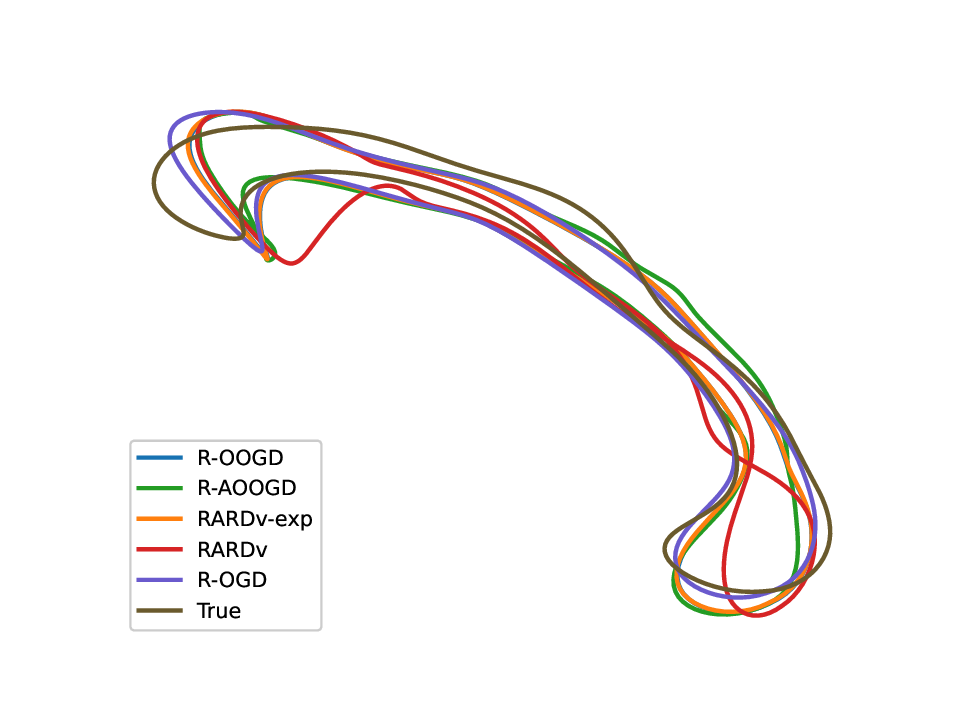}
			\end{minipage}
        }\subfigure[age=90]{
			\begin{minipage}[t]{0.2\linewidth}
				\centering
				\includegraphics[width=\linewidth]{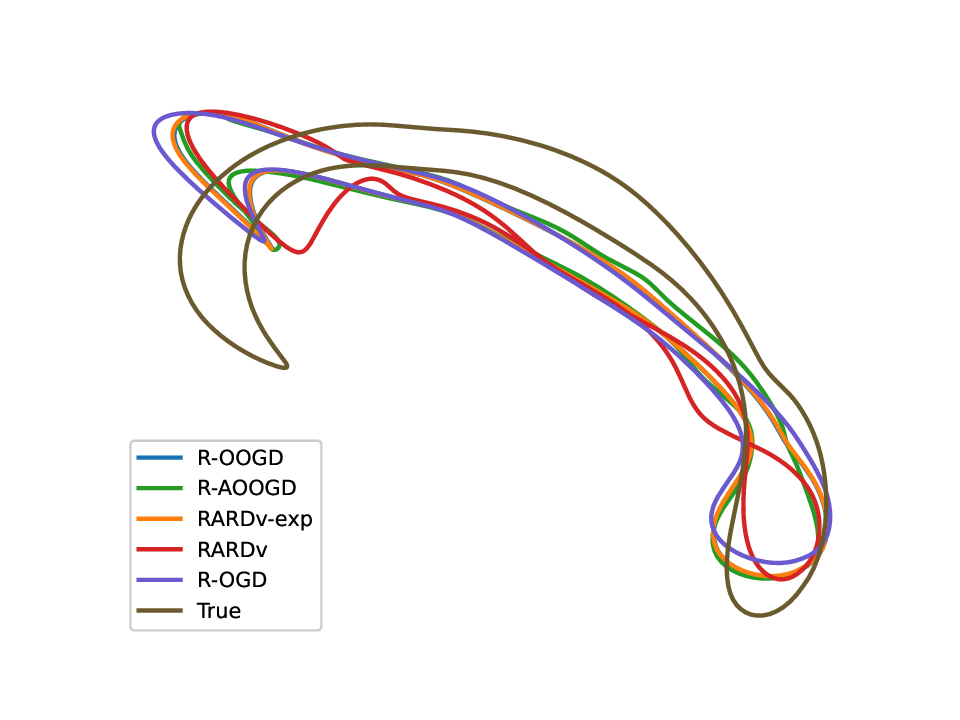}
			\end{minipage}%
        }
		\caption{Algorithm performance on testing set of corpus callosum shapes}
	\end{figure} 
\subsection{Quadratic Geodesic Lodget Game}
We now validate our theoretical finding in Riemannian zero-sum games. Consider the following toy-example RZS game
\begin{align}\label{eq:quad}
    \min_{X\in\mathcal{S}_d^{++}} \max_{Y\in\mathcal{S}_d^{++}}  c_1 (\log\det(X))^2 + c_2 \log\det(X) \log\det(Y) - c_1 (\log\det(Y))^2,
\end{align}
where $X$ and $Y$ take values on the symmetric positive definite (SPD) matrix manifold 
\begin{align*}
    S^{++}_d := \{ X\in \R^{d \times d}; X^T = X, X \succ 0 \}
\end{align*} with affine-invariant metric \begin{align*}
    \langle U,V \rangle_x = tr(X^{-1}UX^{-1}V).
\end{align*} Since the logdet function is geodesic linear on $S^{++}_d$ \citep{han2022riemannian}, the quadratic game \eqref{eq:quad} is g-convex-concave with the g-strong convexity coefficient $c_1$. The NEs of the game \eqref{eq:quad} are $(X^*,Y^*)$ where $\det(X^*) = \det(Y^*) = 1$.

\paragraph{Experimental setting} We test the R-OGDA in the case when $d=30$, $c_2 =1$, and $c_1 \in \{ 0,0.1,1 \}$. We check the R-OGDA with the step size $\eta=0.5$ for $c_1 \in \{0,0.1\}$ and $\eta = 0.2$ for $c_1=1$. We also compare our algorithm with the second-order Riemannian Hamitonian method (RHM) \citep{han2022riemannian}, the Riemannian corrected extragradient method (RCEG) \citep{zhang2022minimax}, and the Riemannian gradient descent ascent algorithm (R-GDA) \citep{NEURIPS2022_2ad9a1a6} with the best-tuned step size. 
\begin{figure}[tbp]
		\centering 
		\subfigure[$c_1=0,c_2=1$]{
			\begin{minipage}[t]{0.3\linewidth}
				\centering
				\includegraphics[width=\linewidth]{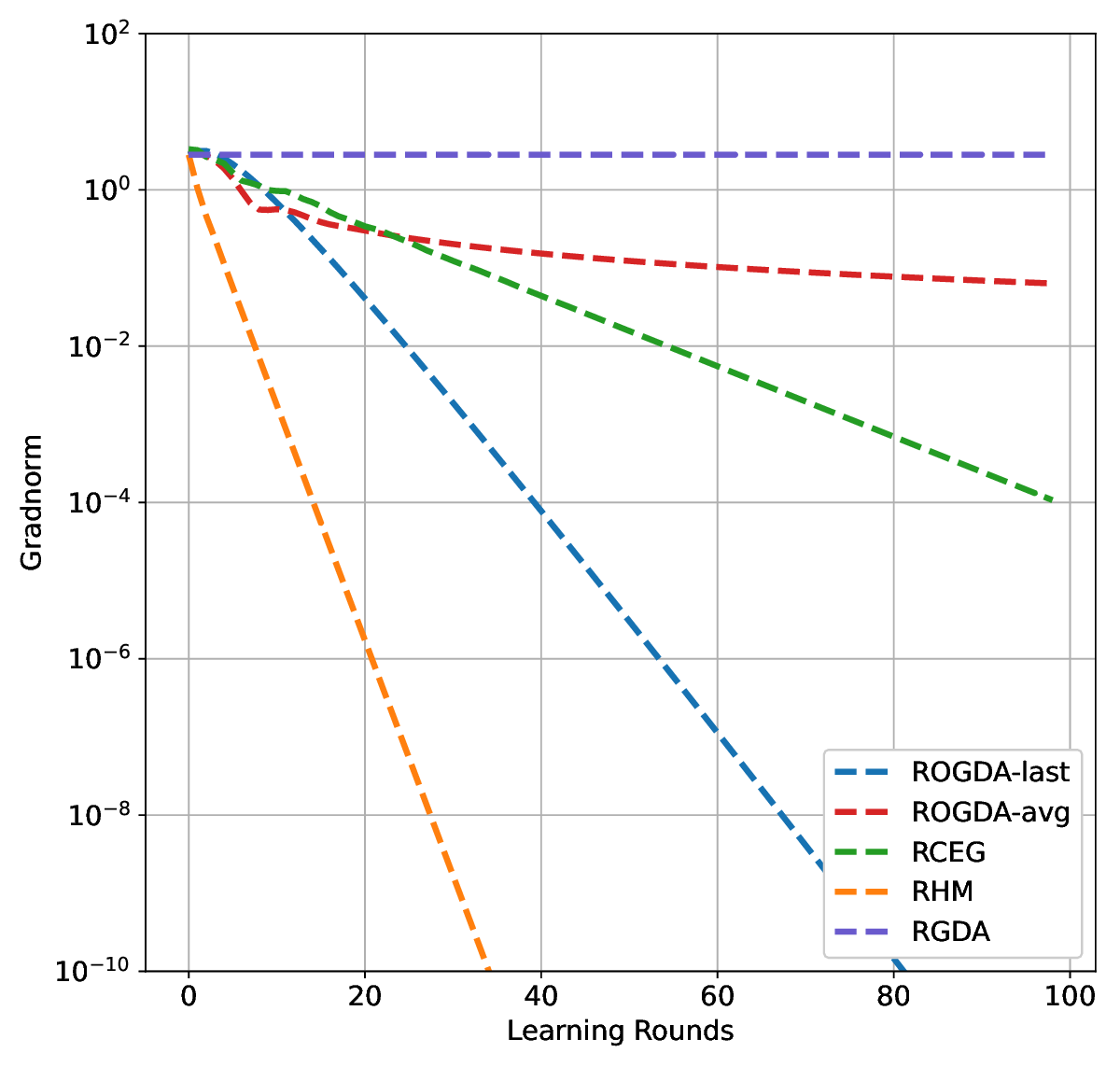}
			\end{minipage}%
		}%
		\subfigure[$c_1=0.1,c_2=1$]{
			\begin{minipage}[t]{0.3\linewidth}
				\centering
				\includegraphics[width=\linewidth]{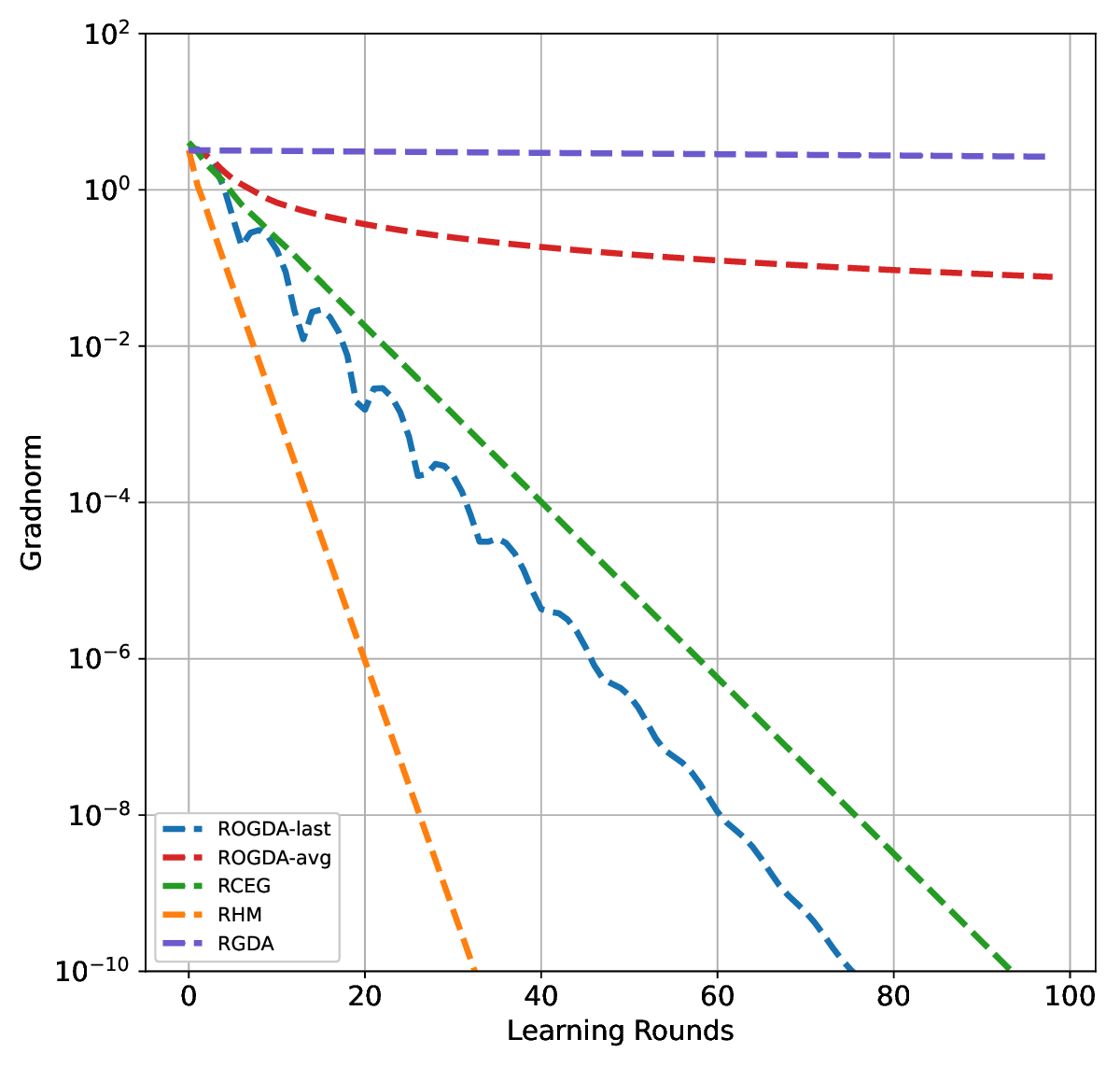}
			\end{minipage}%
			
		}
		\subfigure[$c_1=1,c_2=1$]{
			\begin{minipage}[t]{0.3\linewidth}
				\centering
				\includegraphics[width=\linewidth]{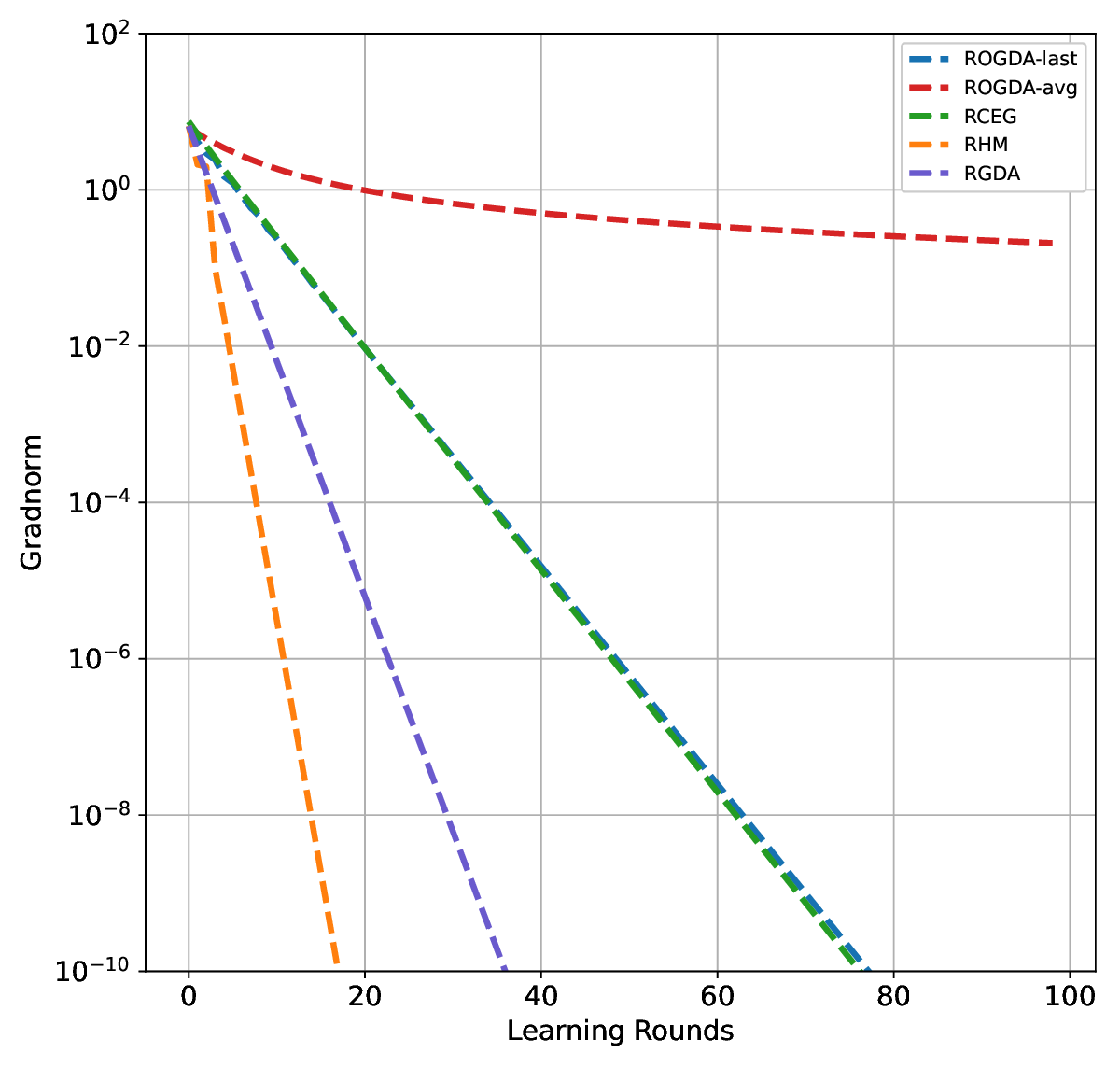}
			\end{minipage}
		}
		\caption{Quadratic geodesic lodget games}
		\label{fig:quad}
	\end{figure} 
\paragraph{Results} We plot the norm of gradient $\|\nabla f (x_t,y_t)\|$ versus learning round $t$ in Figure \ref{fig:quad}. From the results, all the algorithm converge to the NE. Among them, the second-order algorithm RHM performs the best. For first-order algorithms, our R-OGDA algorithm shows good performance in both g-convex-concave and g-strongly convex-strongly concave scenarios. In comparison, R-GDA performs well in g-strongly convex-strongly concave scenarios but does not converge in linear cases (i.e., $c=0$). Similarly, RCEG shows slower convergence in linear scenarios and gradually improves with increasing g-strong convexity of the function, showing comparable convergence to our R-OGDA  when $c=1$. 

\subsection{Robust Geometry-Aware PCA}
Robust geometry-aware principal component analysis (PCA) \citep{horev2016geometry} is a dimensional reduction tool for SPD matrices. Given a set of SPD matrices $\{ A_i \}_{i=1}^n$, the geometry-aware PCA aims to find a geometry mean $A$ with the maximal data variance, which can be formulated as finding the NE in the following RZS game
\begin{align}\label{eq:pca}
    \min_{A\in \mathcal S^{++}_d}\max_{X\in \mathbb S^d} X^T A X + \frac{\alpha}{n} \sum_i^n \| \log(A_i^{-\frac{1}{2}}AA_i^{-\frac{1}{2}}) \|_F,
\end{align}
where $\mathbb S^d$ is the $d$-dimensional unit sphere with the canonical metric $\langle U,V \rangle = U^T V$. The game \eqref{eq:pca} is g-strongly convex to $A$, but not g-concave to $X$. Hence, the game \eqref{eq:pca} is more challenging for our R-OGDA algorithm.

\paragraph{Experimental setting} We run the experiment with a synthetic dataset $\{ A_i \}_{i=1}^n$ and a real-world BCI dataset. The synthetic dataset is generated under conditions similar to those described in prior studies by \cite{zhang2022minimax} and \cite{han2022riemannian}, where the eigenvalues of $A_i$ are in $[0.2,4.5]$. The real-world dataset the \textit{BCI competition dataset IV}\footnote{\url{https://www.bbci.de/competition/iv/download/}} \citep{schlogl2005characterization}. The BCI competition dataset IV collected EEG signals from 59 channels electrodes of 5 subjects who executed left-hand, right-hand, foot and tongue movements. To preprocess the dataset, we follow the procedure outlined in \cite{horev2016geometry}, which involves selecting $200$ trials in the time interval from 0.5s to 2.5s, applying a band-pass filter to remove frequencies outside the 8--15Hz range and extracting covariant matrix into a $6\times6$ matrix. For numerical stability, we divide the covariant matrix by $200$. For the synthetic dataset, we set the number of samples $n=40$, the dimension $d=50$, the regularization parameter $\alpha = 1$, and the step size $\eta = 0.07$. For the competition BCI dataset IV, we set $n=200$, $d=6$, $\alpha = 1$ and $\eta = 0.05$.

\paragraph{Results} As shown in Figure \ref{fig:pca}, on the synthetic datase, the R-OGDA has the fastest convergence rate. For the real world dataset, R-GDA performs the best and the R-OGDA outperforms the RHM and RCEG. The performance in the g-convex-nonconcave RZS games illustrates the potential value of the R-OGDA.
\begin{figure}[tbp]
		\centering 
		\subfigure[synthetic dataset]{
			\begin{minipage}[t]{0.45\linewidth}
				\centering
				\includegraphics[width=\linewidth]{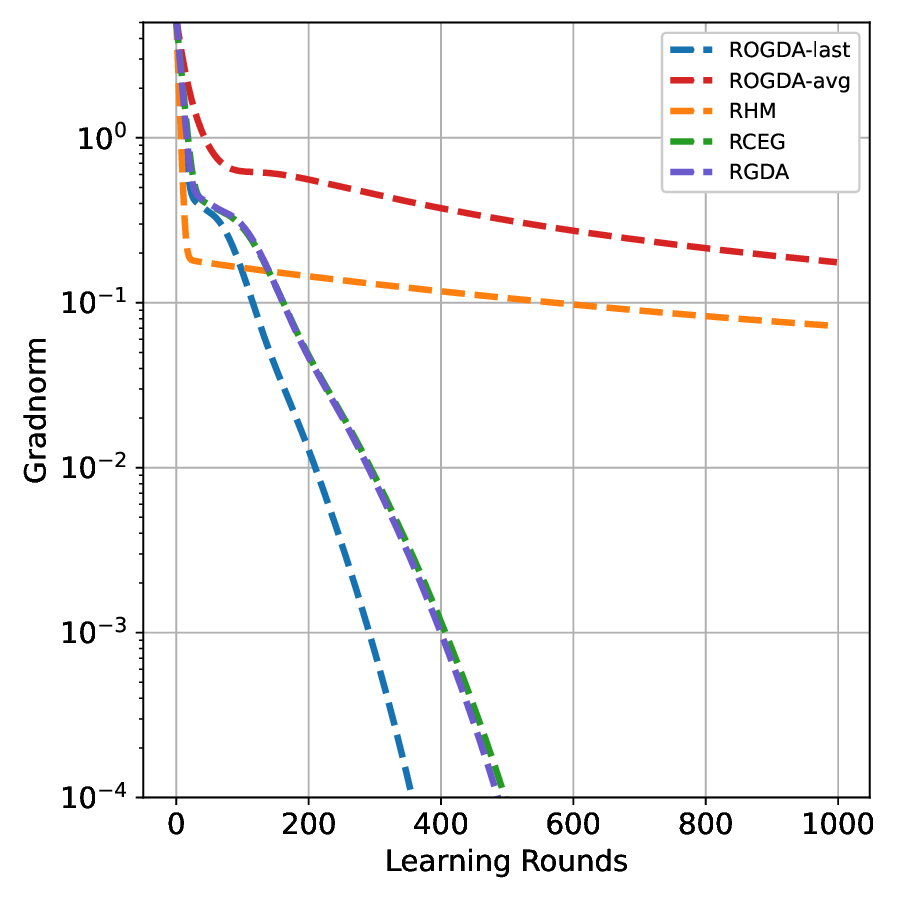}
			\end{minipage}%
		}%
		\subfigure[real-world dataset]{
			\begin{minipage}[t]{0.45\linewidth}
				\centering
				\includegraphics[width=\linewidth]{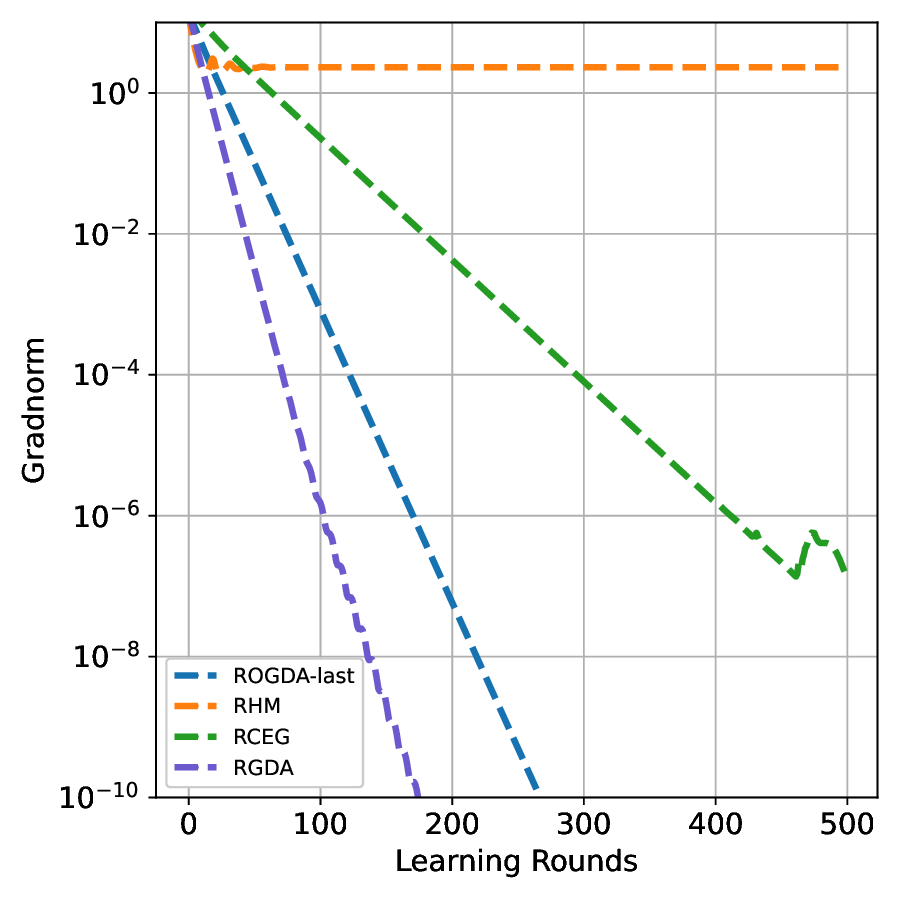}
			\end{minipage}%
			
		}
		\caption{Robust geometric-aware PCA}
		\label{fig:pca}
	\end{figure} 

\section{Conclusion}

In this paper, we have investigated the dynamic regret of Riemannian online optimization. We proposed R-OOGD and established a dynamic regret bound of $\bo(P_T\sqrt{1+V_T})$. Additionally, we introduced R-AOOGD, a meta-expertalgorithm that averages R-OOGD algorithms with different step sizes, which further improved the dynamic regret bound to $\bo(\sqrt{(1+P_T+V_T)(1+P_T)})$. We also applied the ROGD algorithm to Riemannian zero-sum games and achieved convergence rates of $\bo(\frac{1}{T})$ for average-iterate, $\bo(\frac{1}{\sqrt{T}})$ for best-iterate, and $\bo(e^{-\rho t})$ for last-iterate, for smooth g-convex-concave and g-strongly convex-strongly concave games. Our results demonstrate the impact of Riemannian geometry on algorithm performance, and all our regret bounds and convergence rates match the corresponding Euclidean results.

One possible future direction of our work is to consider Riemannian dynamic regret in the bandit feedback setting, where the learner only receives the function value $\ft(x_t)$ instead of the gradient $\nabla \ft(x_t)$. We plan to design Riemannian optimistic bandit algorithms by incorporating Riemannian optimization techniques into existing bandit optimization methods. This will make the resulting algorithms more effective in settings where obtaining gradients is difficult or even impossible, such as reinforcement learning on Riemannian manifolds.

\newpage

\appendix
\section{Definitions and Technical Lemmas}\label{app:lemma}
    In the appendix, we introduce some prerequisite about Riemannian geometry for further analysis. In the rest of the appendix, we denote $\X(\M)$ as the set of vector fields on the Riemannian manifold $\M$ and $C^\infty(\M)$ as the class of infinitely differentiable functions on $\M$. A vector field $X$ is equivalent to an operator on $C^\infty(\M)$ via the directional derivative $X(\f) := \lim_{t\to 0 } \frac{1}{t}( \f(\gamma(t)) - \f(t) )$, where $\f\in C^\infty(\M)$ and $\gamma$ is a curve such that $\gamma(0)=p$ and $\dot\gamma(0) = X(p)$.
    
    First, we recall some definitions on Riemannian manifolds.
    \begin{definition}[\citealp{lee2018introduction}]
        The gradient of a function $\f$ at the point $x$ is defined as the tangent vector $\nabla \f(x) \in T_x\M$ such that $\langle X(x) , \nabla \f(x) \rangle = X(\f)(x),\forall X\in \X(\M)$.
    \end{definition}
    
    \begin{definition}[\citealp{lee2018introduction}]
        The hessian of a function $f$ at the point $x$ is defined as the bilinear operator $\nabla^2 \f(x): T_x\M \times T_x\M \to \R$ such that 
        \begin{align}
            \nabla^2\f(X(x),Y(x))(x) = \langle  \nabla_X \nabla \f(x), Y(x)\rangle,\forall X,Y\in \X(\M).
        \end{align}
    \end{definition}
    
    \begin{definition}[\citealp{lee2018introduction}]
         A vector field $J$ along a geodesic $\gamma:[0,1]\to \M$ is a Jacobi field if it satisfies:
         \begin{align*}
             \langle \nabla_{\dot \gamma} \nabla_{\dot \gamma} J, W \rangle + R(\dot\gamma,J,\dot\gamma,W) = 0,\quad \forall W \in \X(\M).
         \end{align*}
    \end{definition}

    Then we recall some properties of the covariant derivative $\nabla_X Y$, the curvature tensor $R(X,Y,W,Z)$ and the Jacobi field.   
    \begin{lemma}[\citealp{carmo1992riemannian}]
        The covariant derivative $\nabla: \X(\M) \times \X(\M) \to \X(\M)$ satisfies the following properties:
        \begin{itemize}
            \item [(i)] $Z\langle X, Y \rangle = \langle \nabla_Z X , Y \rangle + \langle  X , \nabla_Z Y\rangle, \quad \forall X,Y,Z \in \X(\M)$;
            \item [(ii)] $ \nabla_X Y - \nabla_Y X = [X,Y] $, where $[X,Y] = XY-YX$ is also a vector field on $\M$.
        \end{itemize}
    \end{lemma}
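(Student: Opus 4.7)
The plan is to regard this lemma as a consequence of the Fundamental Theorem of Riemannian Geometry (cf.~do Carmo, Ch.~2): on any Riemannian manifold $(\M,g)$ there exists a unique affine connection, the Levi-Civita connection, that is simultaneously metric-compatible (property (i)) and torsion-free (property (ii)). Since the covariant derivative $\nabla$ adopted throughout the paper is precisely this connection, both properties hold by construction. For a self-contained derivation one must exhibit such a connection explicitly and verify the two properties directly, which I would do in two complementary ways.

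My first approach would be via local coordinates. Fixing a chart $(x^1,\ldots,x^n)$, I define the Christoffel symbols $\Gamma^k_{ij} := \tfrac{1}{2}\, g^{kl}\bigl(\partial_i g_{jl} + \partial_j g_{il} - \partial_l g_{ij}\bigr)$, set $\nabla_{\partial_i}\partial_j := \Gamma^k_{ij}\partial_k$, and extend to general vector fields by $\mathbb R$-linearity in the first slot and the Leibniz rule $\nabla_X(\phi Y) = X(\phi)\, Y + \phi\, \nabla_X Y$ in the second. The manifest symmetry $\Gamma^k_{ij} = \Gamma^k_{ji}$ together with $[\partial_i,\partial_j] = 0$ yields (ii) on the coordinate basis, and the formula propagates globally via $[\phi X,\psi Y] = \phi\psi[X,Y] + \phi X(\psi)\, Y - \psi Y(\phi)\, X$. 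For (i), I would substitute the definition of $\Gamma^k_{ij}$ into the right-hand side and invoke the algebraic identity $\partial_k g_{ij} = g_{lj}\Gamma^l_{ki} + g_{il}\Gamma^l_{kj}$, which is essentially built into the definition of the Christoffel symbols themselves.

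An alternative coordinate-free route uses the Koszul formula: I would first show that any connection satisfying (i) and (ii) necessarily obeys $2\langle \nabla_X Y, Z\rangle = X\langle Y,Z\rangle + Y\langle X,Z\rangle - Z\langle X,Y\rangle + \langle [X,Y], Z\rangle - \langle [X,Z], Y\rangle - \langle [Y,Z], X\rangle$, obtained by applying (i) cyclically to the triple $(X,Y,Z)$ and then using (ii) to eliminate the mixed $\nabla_\bullet\bullet - \nabla_\bullet\bullet$ combinations. Non-degeneracy of $\langle\cdot,\cdot\rangle$ then determines $\nabla_X Y$ uniquely, and conversely defining $\nabla_X Y$ by this formula recovers both (i) and (ii) directly by the same cyclic algebra. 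The main bookkeeping obstacle in this route is verifying that the Koszul expression is genuinely $C^\infty(\M)$-linear in the first slot and a derivation in the second; this hinges on $[\phi X, Y] = \phi[X,Y] - Y(\phi)\, X$ together with the Leibniz property of vector fields acting on smooth functions, which although delicate are entirely standard computations documented in do Carmo.
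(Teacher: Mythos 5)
Your proposal is correct, and in fact the paper offers no proof of this lemma at all --- it is stated as imported background, cited directly to do Carmo --- so the intended justification is exactly the standard argument you reconstruct: these two properties are the defining conditions of the Levi-Civita connection, whose existence and uniqueness is the Fundamental Theorem of Riemannian Geometry, provable either through the Christoffel-symbol construction or the Koszul formula as you outline. Both of your routes are sound and would compile into a complete proof, so there is nothing to correct relative to the paper.
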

    
    \begin{lemma}[\citealp{carmo1992riemannian}]
        If $\gamma:[0,1] \to \M$ is a curve on $M$ and $z\in T_{\gamma(0)}\M$. If we extend $z$ to a vector field $Z(t) \in T_{\gamma(t)}\M$ on $\gamma$ along the parallel transport $\Gamma_{\gamma(t)}$,  then we have $\nabla_{\dot \gamma(t)} Z(t) = 0$, $\forall t\in [0,1]$.
    \end{lemma}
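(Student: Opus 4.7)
The plan is to reduce the claim directly to the definition of the covariant derivative given in the preliminaries,
\begin{align*}
\nabla_X Y(x) = \lim_{t\to 0} \frac{1}{t}\bigl(\Gamma_\gamma X(\gamma(t)) - X(x)\bigr),
\end{align*}
where $\gamma$ is any curve with $\gamma(0) = x$ and $\dot\gamma(0) = Y(x)$, and $\Gamma_\gamma$ transports vectors along $\gamma$ from $\gamma(t)$ back to $\gamma(0)$. To evaluate $\nabla_{\dot\gamma(t_0)} Z(t_0)$ at an arbitrary fixed $t_0 \in [0,1]$, I would invoke this definition at the base point $\gamma(t_0)$ using the reparameterized curve $c(s) := \gamma(t_0+s)$, whose velocity at $s=0$ is exactly $\dot\gamma(t_0)$.

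The key step is then to exploit the semigroup/inverse property of parallel transport along the single fixed curve $\gamma$. Since $Z$ is constructed as the parallel transport of $z$ along $\gamma$, we have $Z(t_0+h) = \Gamma_{\gamma,\, t_0 \to t_0+h} Z(t_0)$ for every small $h$, where $\Gamma_{\gamma,\, t_0 \to t_0+h}$ denotes the forward transport along $\gamma$. Applying the backward transport $\Gamma_{\gamma,\, t_0+h \to t_0}$ (the one appearing in the definition of the covariant derivative) to this expression yields $Z(t_0)$, because backward and forward parallel transport along the same curve are mutual inverses. Therefore the bracketed quantity
\begin{align*}
\Gamma_{\gamma,\, t_0+h \to t_0} Z(t_0+h) - Z(t_0)
\end{align*}
vanishes identically in $h$, and dividing by $h$ and letting $h \to 0$ gives $\nabla_{\dot\gamma(t_0)} Z(t_0) = 0$. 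Since $t_0$ is arbitrary, the conclusion follows for all $t \in [0,1]$.

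The only subtle point, rather than a genuine obstacle, is justifying the concatenation/inverse identity $\Gamma_{\gamma,\, b \to a} \circ \Gamma_{\gamma,\, a \to b} = \mathrm{Id}$ from the inner-product-preserving description of parallel transport adopted in the preliminaries. This identity is a standard consequence either of the linear-ODE characterization of parallel transport or of the uniqueness of its isometric extension along a fixed curve, and I would simply cite \cite{lee2018introduction} or \cite{carmo1992riemannian} for it rather than reproving it. No curvature-dependent estimates, no comparison inequalities, and no expansion of the geodesic equation are needed, since the statement is a pointwise algebraic consequence of the definitions of $Z$ and of $\nabla$.
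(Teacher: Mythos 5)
Your proposal is correct, but note that the paper offers no proof of this lemma at all: it is stated as a cited fact from \citealp{carmo1992riemannian}, where the logical order is the reverse of yours---parallel transport along $\gamma$ is \emph{defined} as the unique solution of the linear ODE $\nabla_{\dot\gamma}Z=0$ with initial condition $Z(0)=z$, so the lemma is true essentially by definition, and the limit formula $\nabla_X Y(x) = \lim_{t\to 0}\frac{1}{t}\big(\Gamma_\gamma X(\gamma(t)) - X(x)\big)$ is then derived as a consequence. Your argument takes the paper's limit formula as the primitive definition and recovers the lemma from it, which is a legitimate alternative route; its entire analytic content is correctly isolated in the two facts you use, namely the semigroup property $\Gamma_{\gamma,\,0\to t_0+h} = \Gamma_{\gamma,\,t_0\to t_0+h}\circ\Gamma_{\gamma,\,0\to t_0}$ (which is what lets you write $Z(t_0+h)=\Gamma_{\gamma,\,t_0\to t_0+h}Z(t_0)$ from the definition of $Z$ as transport from $\gamma(0)$) and the inverse identity $\Gamma_{\gamma,\,b\to a}\circ\Gamma_{\gamma,\,a\to b}=\mathrm{Id}$, after which the difference quotient vanishes identically and the conclusion is immediate at every $t_0$. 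Two small caveats: first, these two transport identities cannot be derived from the inner-product-preservation property quoted in the paper's preliminaries alone (an isometry of tangent spaces is far from unique), so citing the ODE characterization in \citealp{lee2018introduction} or \citealp{carmo1992riemannian} for them, as you do, is necessary and not merely convenient---but once you invoke the ODE characterization you are close to assuming the lemma itself, which is why the textbook presentation defines things in the opposite order; second, since $Z$ is defined only along $\gamma$, your choice of the reparameterized curve $c(s)=\gamma(t_0+s)$ in the limit formula is not just permissible but forced, and it is worth saying so explicitly. Neither point invalidates your proof; it simply means your argument and do Carmo's differ only in which characterization of parallel transport is taken as the starting point.
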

    \begin{lemma}[\citealp{andrews2010ricci}]\label{lemma:sec}
        Denote the curvature tensor \begin{align*}
            \langle \nabla_X\nabla_Y Z- \nabla_Y\nabla_X Z -\nabla_{[X,Y]} Z, W\rangle = R(X,Y,W,Z),
        \end{align*} then the following statement hold.
        \begin{itemize}
            \item [(i)] $R(X,Y,W,Z)$ is multilinear over $C^\infty(\M)$ i.e.,
            \begin{align*}
                R(f_1X,f_2Y,f_3W,f_4Z) = f_1f_2f_3f_4R(X,Y,W,Z), \quad \forall f_1,f_2,f_3,f_4 \in C^\infty(\M).
            \end{align*}
            \item [(ii)] The curvature tensor is determined by the its values on $2-$dimensional surfaces \\$\mathcal K(X,Y) = R(X,Y,X,Y)$, i.e.,
            \begin{align*}
                6R(X,Y,W,Z) &= \mathcal K(X+W,Y+Z) - \mathcal K(X,Y+Z) + \mathcal K(W,X+Z) \\
                            & \quad + \mathcal K(Y+W,X+Z) - \mathcal K(Y,X+Z) + \mathcal K(W,X+Z)   \\
                            & \quad - \mathcal K(X+W,Y) + \mathcal K(X,Y) + \mathcal K(W,Y)\\
                            & \quad - \mathcal K(X+W,Z) + \mathcal K(X,Z) + \mathcal K(W,Z)\\
                            & \quad + \mathcal K(Y+W,X) + \mathcal K(Y,X) - \mathcal K(W,X)\\
                            & \quad + \mathcal K(Y+W,Z) - \mathcal K(Y,Z) - \mathcal K(W,Z).
            \end{align*}
        \end{itemize}
    \end{lemma}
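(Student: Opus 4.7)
For part (i), the plan is to verify $C^\infty(\M)$-linearity slot by slot, using three properties of the Levi-Civita connection: $\nabla$ is $C^\infty$-linear in the subscript argument, is a derivation in the upper argument (so $\nabla_X(fY) = X(f)Y + f\nabla_X Y$), and interacts with the Lie bracket via $[fX,Y] = f[X,Y] - Y(f)X$. The $W$-slot is trivial because the inner product is $C^\infty$-bilinear. The $X$-slot uses only $C^\infty$-linearity of $\nabla_\cdot$ and the bracket identity, the $Y(f)X$ correction from $[fX,Y]$ cancelling the Leibniz residue produced by $\nabla_Y(f\nabla_X Z)$. For the $Y$-slot, the term $X(f)\nabla_Y Z$ generated by $\nabla_X(f\nabla_Y Z)$ cancels against the term produced by $[X,fY]$. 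For the $Z$-slot, the second-order scalar terms $X(Y(f))Z - Y(X(f))Z$ from the repeated Leibniz expansion combine into $[X,Y](f)Z$, which is exactly what is killed by $\nabla_{[X,Y]}(fZ)$; this is, in fact, the structural reason $R$ is defined with the $-\nabla_{[X,Y]}Z$ correction.

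For part (ii), the plan is first to establish the standard symmetries of $R$, which are algebraic consequences of metric compatibility and torsion-freeness of the Levi-Civita connection: antisymmetry $R(X,Y,W,Z) = -R(Y,X,W,Z)$ (immediate from the definition); antisymmetry $R(X,Y,W,Z) = -R(X,Y,Z,W)$ (from compatibility applied twice to $\langle Z,Z\rangle$ and the relation $X(Y g) - Y(X g) = [X,Y] g$); the first Bianchi identity $R(X,Y)Z + R(Y,Z)X + R(Z,X)Y = 0$ (from torsion-freeness); and pair symmetry $R(X,Y,W,Z) = R(W,Z,X,Y)$ (derivable by combining the previous three). Given these, the claimed identity becomes a purely algebraic statement about a quadrilinear form on a vector space. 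The strategy is then to expand every $\mathcal K(\cdot,\cdot)$ on the right-hand side via quadrilinearity (justified by part (i)), obtaining a sum of monomials $R(A,B,C,D)$ with arguments drawn from $\{X,Y,W,Z\}$, and to verify that after applying the four symmetries all such monomials collapse to $6R(X,Y,W,Z)$.

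The main obstacle is the combinatorial bookkeeping in part (ii): each $\mathcal K(X_1 + X_2, Y_1 + Y_2)$ produces sixteen monomials, and the right-hand side contains on the order of twenty $\mathcal K$-terms. My approach to controlling this is to group the monomials by the multiset of their arguments, classifying them as pure (such as $R(X,Y,X,Y) = \mathcal K(X,Y)$), semi-mixed with one repeat (such as $R(X,Y,X,Z)$), or fully mixed (the permutations of $R(X,Y,W,Z)$), and then verify coefficient balance class by class rather than term by term. The first Bianchi identity is the essential tool at the fully-mixed level, reducing the three permutations $R(X,Y,W,Z)$, $R(X,W,Y,Z)$, $R(X,Z,Y,W)$ modulo pair-antisymmetry to two independent quantities, after which the cancellation of everything except $6R(X,Y,W,Z)$ becomes a finite linear check.
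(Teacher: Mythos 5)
The paper itself offers no proof of this lemma --- it is imported verbatim from \cite{andrews2010ricci} --- so the benchmark is the standard textbook argument, and your plan reproduces it faithfully. For (i), the slot-by-slot computation with exactly the cancellations you name (the $Y(f)\nabla_X Z$ residue against the correction coming from $[fX,Y]=f[X,Y]-Y(f)X$ in the $X$-slot, and the second-order term $[X,Y](f)Z$ against the Leibniz expansion of $\nabla_{[X,Y]}(fZ)$ in the $Z$-slot) is correct and complete. For (ii), the route via the two antisymmetries, the first Bianchi identity, pair symmetry derived from the first three, and then a polarization expansion with Bianchi resolving the fully mixed class is the standard proof. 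One small point to make explicit: part (i) as stated only gives $C^\infty$-homogeneity in each slot; the additivity you invoke when expanding $\mathcal K(X_1+X_2,\,Y_1+Y_2)$ is a separate (trivial) consequence of the additivity of $\nabla$ and of the bracket, and should be recorded once before the expansion.

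A caution for the endgame of (ii): the display as printed in the paper is a garbled transcription of the identity in the cited source, so your class-by-class coefficient check, executed honestly against the literal statement, will \emph{not} close --- and this will be a defect of the transcription, not of your method. Concretely, the term $+\mathcal K(W,X+Z)$ appears twice with the same sign (on the first two lines; the first occurrence should read $-\mathcal K(W,Y+Z)$ to match the difference pattern), the terms $+\mathcal K(W,Z)$ and $-\mathcal K(W,Z)$ on the fourth and sixth lines cancel outright, and several signs on the second and fifth lines are inconsistent with the antisymmetry $R(X,Y,W,Z)=-R(Y,X,W,Z)$: swapping $X\leftrightarrow Y$ must negate the right-hand side, and a quick inspection shows the printed expression fails this test (e.g., $\mathcal K(X+W,Y+Z)$ and $\mathcal K(Y+W,X+Z)$ both enter with $+$). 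The intended identity is the difference of two second mixed differences of $\mathcal K$, one polarized in the pairs $(X,W;Y,Z)$ and the other in $(Y,W;X,Z)$, i.e.\ each group has the signature $\mathcal K(A+C,B+D)-\mathcal K(A,B+D)-\mathcal K(C,B+D)-\mathcal K(A+C,B)+\mathcal K(A,B)+\mathcal K(C,B)-\mathcal K(A+C,D)+\mathcal K(A,D)+\mathcal K(C,D)$, and the second group enters with an overall minus. Run your verification against that corrected form and your plan, including the use of Bianchi at the fully mixed level, goes through.
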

    
    \begin{lemma}[Jacobi comparison theorem,\citealp{lee2018introduction}]\label{lemma:jacobi}
        Suppose $\M$ is a Riemannian manifold and $\gamma:[0,b]\to\M$ is a geodesic with $\|\dot\gamma(0)\| = 1$. $J$ is a Jacobi field along $\gamma$.
        \begin{itemize}
            \item [(i)] If all sectional curvatures of $\M$ are upper bounded by a constant $K$, then denote
            \begin{align*}
                \mathbf S(K,t) = 
                \begin{cases}
                        \frac{\sin(\sqrt{K}t)}{\sqrt{K}} & \quad K>0;\\
                         t                        & \quad K\le 0.
                \end{cases}
            \end{align*}
        Then,
        \begin{align*}
            \|J(\gamma(t))\| \ge \mathbf S(K,t) \| \nabla_{\dot\gamma} J(\gamma(0)) \|
        \end{align*}
        for all $t\in[0,b_1]$, where $b_1=b$ if $K\le0$ and $b_1 = \min(\frac{\pi}{\sqrt{K}},b)$ if $K\ge0$.
            \item [(ii)] If all sectional curvatures of $\M$ are lower bounded by a constant $\kappa$, then denote
            \begin{align*}
                \mathbf s(\kappa,t) = 
                \begin{cases}
                        \frac{\sinh(\sqrt{-\kappa}t)}{\sqrt{-\kappa}} & \quad \kappa<0;\\
                         t                        & \quad \kappa \ge 0.
                \end{cases}
            \end{align*}
        Then,
        \begin{align*}
            \|J(\gamma(t))\| \le \mathbf s(\kappa,t) \| \nabla_{\dot\gamma} J(\gamma(0)) \|
        \end{align*}
        for all $t\in[0,b]$.
        \end{itemize}
    \end{lemma}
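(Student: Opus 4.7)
The plan is to reduce the problem to a one-dimensional Sturm comparison applied to the scalar function $f(t) = \|J(\gamma(t))\|$. First, I would decompose $J = J^\parallel + J^\perp$ into components tangent and normal to $\dot\gamma$. The tangential part satisfies $\nabla_{\dot\gamma}\nabla_{\dot\gamma}J^\parallel = 0$ (its projection onto $\dot\gamma$ is linear in $t$) and contributes identically to both sides of the comparison, so the substantive content lies in $J^\perp$. Since the stated bound involves only $\|\nabla_{\dot\gamma} J(\gamma(0))\|$, I would reduce to the canonical case $J(\gamma(0))=0$, $J\perp\dot\gamma$; this is essentially forced by the shape of the inequality (a nonzero $J(\gamma(0))$ would break (ii) immediately).

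Second, using the Jacobi equation $\nabla_{\dot\gamma}\nabla_{\dot\gamma}J + R(\dot\gamma,J)\dot\gamma = 0$ together with compatibility of the connection, I would compute
\begin{equation*}
\tfrac{1}{2}(f^2)''(t) = \|\nabla_{\dot\gamma} J\|^2 - R(\dot\gamma,J,\dot\gamma,J) = \|\nabla_{\dot\gamma} J\|^2 - \mathrm{sec}(\dot\gamma,J)\,\|J\|^2,
\end{equation*}
where in the second equality I used $\|\dot\gamma\|=1$ and $\langle\dot\gamma,J\rangle = 0$ so that the denominator in the definition of sectional curvature reduces to $\|J\|^2$. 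Expanding $(f^2)'' = 2ff'' + 2(f')^2$ and invoking the Cauchy--Schwarz bound $(f')^2 = \langle \nabla_{\dot\gamma}J, J\rangle^2/f^2 \le \|\nabla_{\dot\gamma}J\|^2$ (valid on the open set $\{f>0\}$), I obtain the pointwise differential inequality
\begin{equation*}
f''(t) + \mathrm{sec}(\dot\gamma, J)(t)\, f(t) \ge 0.
\end{equation*}

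Third, I would invoke Sturm's comparison theorem. Under the upper bound $\mathrm{sec} \le K$, $f$ satisfies $f'' + Kf \ge 0$, while the model $\phi(t) = \|\nabla_{\dot\gamma}J(\gamma(0))\|\cdot \mathbf{S}(K,t)$ solves $\phi'' + K\phi = 0$ with the same initial data $\phi(0)=0$, $\phi'(0) = \|\nabla_{\dot\gamma}J(\gamma(0))\|$. Standard Sturm comparison then yields $f(t) \ge \phi(t)$ on $[0,b_1]$, proving (i); the restriction $b_1 \le \pi/\sqrt{K}$ when $K>0$ is exactly the largest interval on which $\phi$ stays positive, which is needed so the comparison does not degenerate at a zero of the model. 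Symmetrically, under $\mathrm{sec}\ge\kappa$ the inequality reverses to $f'' + \kappa f \le 0$, and comparison with $\psi(t) = \|\nabla_{\dot\gamma}J(\gamma(0))\| \cdot \mathbf{s}(\kappa,t)$, which stays strictly positive for $t>0$, yields (ii) throughout $[0,b]$.

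The main obstacle I anticipate is the regularity of $f = \|J\|$ at zeros of $J$, especially at $t=0$ where $J(0)=0$ by the reduction. I would sidestep this by working entirely with the smooth quantity $f^2$ and recording the ODE inequality in the form $(f^2)'' + 2\mathrm{sec}\cdot f^2 \ge 2\|\nabla_{\dot\gamma}J\|^2 - 2(f')^2$, then restricting to the connected component $\{f>0\}$ containing a right-neighborhood of $0$ to pass to the inequality for $f$. Extending the Sturm comparison across the initial zero uses that $f$ and $\phi$ share matching first-order Taylor data at $0$, so that $f/\phi \to 1$ as $t\to 0^+$; a standard Wronskian argument applied to $W = f\phi' - f'\phi$ then propagates the inequality, and in case (ii) rules out the possibility that $f$ vanishes before $\psi$ does.
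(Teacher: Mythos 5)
The paper offers no proof of this lemma at all---it is quoted verbatim (with a small omission, see below) from \citealp{lee2018introduction}---so your proposal has to be judged against the standard textbook argument. Your part (i) is essentially that argument and is sound: you correctly infer the missing hypothesis $J(\gamma(0))=0$ (without it both parts are false, so the paper's statement implicitly assumes it), and with $f=\|J\|$ the identity $ff''=\|\nabla_{\dot\gamma}J\|^2-(f')^2-\sec(\dot\gamma,J)f^2$ together with Cauchy--Schwarz gives $f''+\sec(\dot\gamma,J)f\ge 0$ wherever $f>0$; the Wronskian/Sturm comparison against $\phi=\mathbf S(K,\cdot)\,\|\nabla_{\dot\gamma}J(\gamma(0))\|$ then yields (i) on $[0,\min(b,\pi/\sqrt K)]$, with your $f^2$-regularization handling the initial zero.

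The genuine gap is in part (ii): the differential inequality does \emph{not} ``reverse symmetrically.'' Cauchy--Schwarz gives $(f')^2\le\|\nabla_{\dot\gamma}J\|^2$, so the surplus term $(\|\nabla_{\dot\gamma}J\|^2-(f')^2)/f$ always enters with the sign that helps the lower bound; one gets $f''+\sec f\ge 0$ unconditionally, and $f''+\kappa f\le 0$ would require the \emph{reverse} Cauchy--Schwarz inequality, i.e. $\nabla_{\dot\gamma}J$ parallel to $J$, which fails in general. This asymmetry is precisely why the lower-curvature-bound direction of Rauch comparison needs a different mechanism: the index lemma (Jacobi fields minimize the index form among fields with the same endpoint data, valid only before the first conjugate point) or a Riccati comparison, which shows $\|J\|/\mathbf s(\kappa,\cdot)$ is nonincreasing up to the first zero of $J$. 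Relatedly, your closing claim that the Wronskian argument ``rules out that $f$ vanishes before $\psi$ does'' is backwards: a lower curvature bound permits arbitrarily large positive curvature, so $J$ can vanish early, and past its first zero the estimate (ii) is genuinely false. Concretely, let $k\ge 0$ vanish on $[0,1]$ and equal $A^2$ on a short burst thereafter; the solution of $u''+k u=0$, $u(0)=0$, $u'(0)=1$ leaves the burst at its first zero with $|u'|=\sqrt{1+A^2}$ and shortly afterwards violates $|u(t)|\le t=\mathbf s(0,t)$, and this curvature profile is realizable along a geodesic of a surface with Gauss curvature everywhere $\ge 0$ (prescribe it in Fermi coordinates). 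So (ii) ``throughout $[0,b]$'' cannot be proved as stated; it holds only up to the first zero of $J$. The paper's transcription shares this overstatement---harmlessly, since in its applications the two-sided bound is in force and part (i) guarantees $J\neq 0$ before $\pi/\sqrt K$---but your proof must both insert that restriction and replace the ``symmetric'' Sturm step with an index-form or Riccati argument.
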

    
    Furthermore, we consider two comparison inequalities, which served as the law of consine over Riemannian manifolds.
    \begin{lemma}[\citealp{zhang2016first}]
        Let $\M$ be a Riemannian manifold with all sectional curvatures lower bounded by $\kappa$. Denote 
        \begin{align*}
            \zeta(\kappa,D) = 
            \begin{cases}
                \frac{\sqrt{-\kappa}D}{\tanh(\sqrt{-\kappa}D)} & \kappa<0; \\
                1 &  \kappa \ge 0.
            \end{cases}
        \end{align*}
    Then for a geodesic triangle $\triangle ABC$, we have
    \begin{align*}
        2\langle \expinv{A}{C},  \expinv{A}{B} \rangle \le d^2(A,B)+ \zeta(\kappa,d(A,B)) d^2(A,C) -  d^2(B,C).
    \end{align*}
    \end{lemma}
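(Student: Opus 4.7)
The plan is to combine Toponogov's triangle comparison theorem with an explicit calculation in the model space of constant curvature $\kappa$. Write $a=d(B,C)$, $b=d(A,C)$, $c=d(A,B)$, and let $\angle A$ be the angle between $\expinv{A}{B}$ and $\expinv{A}{C}$. The inequality to prove becomes $a^2 \le c^2 + \zeta(\kappa,c) b^2 - 2bc\cos\angle A$. First I would construct the comparison triangle $\tilde A\tilde B\tilde C$ in the 2-dimensional simply connected model space of constant curvature $\kappa$ with $d(\tilde A,\tilde B)=c$, $d(\tilde A,\tilde C)=b$, and $\angle \tilde A = \angle A$, and deduce $a \le \tilde a := d(\tilde B,\tilde C)$ via Toponogov's theorem. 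Toponogov follows from Lemma \ref{lemma:jacobi}(ii): fix the geodesic from $A$ to $B$, consider the family of geodesics from $A$ hitting each point of $BC$, and estimate the length of the variation field (which is a Jacobi field vanishing at $A$) by its model-space counterpart. So it suffices to prove the inequality purely in the model space.

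For $\kappa \ge 0$ we have $\zeta=1$ and the target is the Euclidean law of cosines, an equality. The substantive case is $\kappa<0$. Setting $\alpha = \sqrt{-\kappa}$, the hyperbolic law of cosines gives
\begin{equation*}
\cosh(\alpha \tilde a) = \cosh(\alpha b)\cosh(\alpha c) - \sinh(\alpha b)\sinh(\alpha c)\cos\angle A.
\end{equation*}
Fix $c$ and $\angle A$ and regard $\tilde a$ as a function of $b$. Define
\begin{equation*}
F(b) := c^2 + \zeta(\kappa,c)\, b^2 - 2bc\cos\angle A - \tilde a(b)^2.
\end{equation*}
Implicit differentiation of the hyperbolic cosine law yields $\tilde a(0)=c$ and $\tilde a'(0) = -\cos\angle A$, whence $F(0)=0$ and $F'(0)=0$. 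A second implicit differentiation combined with the identity $\alpha x \coth(\alpha x) = \zeta(\kappa,x)$ gives the clean formula
\begin{equation*}
F''(b) = 2\bigl[\zeta(\kappa,c) - \zeta(\kappa,\tilde a)\bigr] + 2(\tilde a'(b))^2 \bigl[\zeta(\kappa,\tilde a) - 1\bigr].
\end{equation*}
The proof then reduces to showing $F(b)\ge 0$ for all $b\ge 0$ from this $F''$ expression.

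I expect the main obstacle to be the last step, proving global nonnegativity of $F$. In the regime $\tilde a \le c$, monotonicity of $\zeta(\kappa,\cdot)$ and $\zeta\ge 1$ make $F''\ge 0$ directly, and the boundary conditions force $F\ge 0$. The delicate range is $\tilde a > c$, where the first bracket in $F''$ is negative and must be controlled by the $(\tilde a')^2[\zeta(\kappa,\tilde a)-1]$ term. The plan there is to derive $|\tilde a'| \ge 1$ once $\tilde a$ overtakes $c$, by noticing that the implicit expression $\sinh(\alpha \tilde a)\,\tilde a' = \sinh(\alpha b)\cosh(\alpha c) - \cosh(\alpha b)\sinh(\alpha c)\cos\angle A$ forces $|\tilde a'|$ to approach $1$ as $b$ grows, and to then verify an ODE comparison showing the two terms in $F''$ balance. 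A cleaner alternative I would also try is to bypass $F$ altogether: apply the convexity bound $\cosh(\alpha\tilde a) \ge 1 + \alpha^2\tilde a^2/2$ to turn the hyperbolic law of cosines into an upper bound on $\tilde a^2$, then use the product-to-sum identities $2\cosh x\cosh y = \cosh(x+y)+\cosh(x-y)$ and $2\sinh x\sinh y = \cosh(x+y) - \cosh(x-y)$ to split the right-hand side into pieces each of which can be compared to the corresponding piece of $c^2 + \zeta(\kappa,c)b^2 - 2bc\cos\angle A$ using only scalar inequalities such as $\sinh(x)/x \le \cosh(x)$ and the monotonicity of $x\coth x$.
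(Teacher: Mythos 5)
Your reduction via Toponogov's hinge comparison to the constant-curvature model, and your calculus afterwards, are fine as far as they go: the paper itself states this lemma without proof, importing it from \citet{zhang2016first}, and that source's proof likewise begins by reducing to the hyperbolic law of cosines, so the real content is the scalar inequality $\tilde a^2 \le c^2 + \zeta(\kappa,c)b^2 - 2bc\cos\angle A$. Normalizing $\kappa=-1$ (the inequality is degree-$2$ homogeneous and $\zeta$ is scale-invariant), your identities check out: differentiating $\cosh\tilde a = \cosh b\cosh c - \sinh b\sinh c\cos\angle A$ twice and reusing the law of cosines gives $\sinh\tilde a\,\tilde a'' = \cosh\tilde a\,(1-(\tilde a')^2)$, hence exactly your formula $F''(b) = 2[\zeta(\kappa,c)-\zeta(\kappa,\tilde a)] + 2(\tilde a')^2[\zeta(\kappa,\tilde a)-1]$, with $F(0)=F'(0)=0$. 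The genuine gap is the pivotal claim that $|\tilde a'|\ge 1$ once $\tilde a > c$, which is false. Take $\cos\angle A = 0$: then $\cosh\tilde a = \cosh b\cosh c$, so $\tilde a > c$ for every $b>0$, yet
\begin{align*}
(\tilde a')^2 = \frac{\sinh^2 b\,\cosh^2 c}{\cosh^2 b\,\cosh^2 c - 1} = \frac{\sinh^2 b\,\cosh^2 c}{\sinh^2 b\,\cosh^2 c + \sinh^2 c} < 1
\end{align*}
for every $b$; $|\tilde a'|$ approaches $1$ only from below and never attains it. So the chain $F''\ge 2[\zeta(\kappa,c)-\zeta(\kappa,\tilde a)]+2[\zeta(\kappa,\tilde a)-1]=2[\zeta(\kappa,c)-1]\ge 0$ has no support precisely in the regime you flagged as delicate. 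Worse, $F''\ge 0$ — if true at all — is razor-thin there: expanding for small $b$ with $\tilde a = c+tb$ gives $F''/2 \approx \bigl(t+\tfrac{b\coth c}{2}\bigr)^2\bigl(\zeta(\kappa,c)-1\bigr) - tb\,\zeta'(\kappa,c)$, whose minimum over $t$ is nonnegative only because of the nontrivial inequality $2\coth c\,(\zeta(\kappa,c)-1)\ge \partial_c\zeta(\kappa,c)$, which you would still have to prove. Nothing in the proposal establishes the convexity of $F$.

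Your fallback route fails for the same structural reason. The bound $\cosh\tilde a \ge 1+\tilde a^2/2$ turns the law of cosines into $\tilde a^2 \le 2(\cosh b\cosh c - \sinh b\sinh c\cos\angle A - 1)$, whose right side grows exponentially in $b$ while the target $c^2+\zeta(\kappa,c)b^2-2bc\cos\angle A$ is quadratic; already for $\cos\angle A=0$ the needed comparison $2(\cosh b\cosh c - 1)\le \zeta(\kappa,c)b^2+c^2$ is false for all large $b$, and no product-to-sum bookkeeping can repair an exponential loss. (A similarly crude use of $\sinh x\ge x$ only yields $\tilde a^2 \le (b-c)^2 + 2(1-\cos\angle A)\sinh b\sinh c$, again exponentially weak.) A correct completion must couple the error terms across regimes rather than bound them pointwise — for instance by proving the first-order monotonicity $F'(b)\ge 0$, i.e. $\tilde a\,\tilde a' \le \zeta(\kappa,c)\,b - c\cos\angle A$, directly and integrating from the equality case $b=0$, which is the shape of argument the cited source uses in place of your (false) gradient lower bound.
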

    
    \begin{lemma}[\citealp{alimisis2021momentum}]\label{lemma:sigma}
        Let $\M$ be a Riemannian manifold with all sectional curvatures upper bounded by $K$. Denote 
        \begin{align*}
            \sigma(K,D) = 
            \begin{cases}
                \frac{\sqrt{K}D}{\tan(\sqrt{K}D)} & K>0; \\
                1 &  K\le 0,
            \end{cases}
            \quad \text{and} \quad D(K) = 
            \begin{cases}
                \infty & K \le 0; \\
                \frac{\pi}{2\sqrt{K}} &  K > 0.
            \end{cases}
    \end{align*}
    If a geodesic triangle $\triangle ABC$ has diameter less than $D(K)$, then we have
    \begin{align*}
        2\langle \expinv{A}{C},  \expinv{A}{B} \rangle \ge d^2(A,B)+ \sigma(K,d(A,B)) d^2(A,C) -  d^2(B,C).
    \end{align*}
    \end{lemma}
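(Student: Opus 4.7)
The plan is to establish this comparison inequality via the classical Hessian-comparison technique for the squared-distance function, combined with a second-order Taylor expansion along a geodesic. This is the standard strategy for deriving $\sigma$- and $\zeta$-type law-of-cosines inequalities on Riemannian manifolds with sectional curvature bounds.

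First, I would invoke the Hessian comparison theorem for upper curvature bounds. For any $P \in \M$ and the squared-distance function $\phi_P(x) := \tfrac{1}{2} d^2(x, P)$, when all sectional curvatures of $\M$ are upper bounded by $K$ and $x$ lies within the geodesic ball of radius $D(K)$ about $P$ (guaranteed here by the diameter hypothesis, which also precludes conjugate points), a Rauch-type Jacobi-field estimate obtained from Lemma~\ref{lemma:jacobi} yields
\begin{equation*}
\nabla^2 \phi_P(x)[v, v] \;\ge\; \sigma(K, d(x, P))\, \|v\|^2, \qquad v \in T_x \M.
\end{equation*}
The factor $\sigma(K, \cdot)$ is precisely the quantity that captures how the convexity of the squared distance is compressed under an upper curvature bound, via the slower growth of perpendicular Jacobi fields relative to the Euclidean case.

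Next, I would Taylor-expand $\phi_B$ to second order along the geodesic $\gamma: [0, 1] \to \M$ from $A$ to $C$. Using $\nabla \phi_B(A) = -\expinv{A}{B}$, $\|\dot\gamma(t)\| = d(A, C)$, and the Hessian lower bound just stated, a rearrangement of the resulting Taylor identity gives
\begin{equation*}
2\langle \expinv{A}{B}, \expinv{A}{C}\rangle \;\ge\; d^2(A, B) + 2\left(\int_0^1 (1-t)\, \sigma(K, d(\gamma(t), B))\, dt\right) d^2(A, C) - d^2(B, C).
\end{equation*}

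The main obstacle is then to identify the integrated weight $2\int_0^1 (1-t)\, \sigma(K, d(\gamma(t), B))\, dt$ with the boundary value $\sigma(K, d(A, B))$ appearing in the claim. Since $\sigma(K, \cdot)$ is monotonically decreasing on $(0, D(K))$, it suffices to bound $\sup_{t \in [0,1]} d(\gamma(t), B)$ and then verify the corresponding integral inequality. I would address this by invoking Toponogov's hinge comparison in the constant-curvature model space $M_K^2$, which reduces the control of $d(\gamma(t), B)$ to an elementary trigonometric computation on the round sphere (or the Euclidean plane when $K = 0$). The diameter restriction $D < D(K) = \pi/(2\sqrt{K})$ is essential both for the validity of the Hessian bound and for keeping all arguments of $\sin$, $\cos$, $\tan$ within $(0, \pi/2)$, where the requisite trigonometric inequalities follow from the monotonicity of these functions and the concavity of $\sin$ on that interval.
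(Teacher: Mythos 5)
Your first two steps reproduce exactly the standard proof of this lemma. Note that the paper itself offers no proof --- it quotes the result from \citealp{alimisis2021momentum} --- and the proof in that cited source is precisely your combination of the Rauch/Jacobi-field Hessian bound $\nabla^2 \tfrac{1}{2}d^2(\cdot,B) \succeq \sigma(K,d(\cdot,B))\,g$ with a second-order Taylor expansion of $t\mapsto \tfrac{1}{2}d^2(\gamma(t),B)$ along the geodesic from $A$ to $C$; your intermediate inequality with the weight $2\int_0^1(1-t)\,\sigma(K,d(\gamma(t),B))\,dt$ is correct.

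The gap is in your third step, and it cannot be repaired as planned. Since $\sigma(K,\cdot)$ is decreasing, bounding $\sup_{t}d(\gamma(t),B)$ and invoking monotonicity can only yield the constant $\sigma\bigl(K,\max\{d(A,B),d(C,B)\}\bigr)$: by convexity of $d(\cdot,B)$ on the ball of radius $\pi/(2\sqrt{K})$, the supremum is attained at an endpoint and equals $\max\{d(A,B),d(C,B)\}$, which may strictly exceed $d(A,B)$. No Toponogov computation can upgrade this to the boundary value $\sigma(K,d(A,B))$, because the inequality with that constant is in fact false as literally stated: on the unit sphere ($K=1$), take a right angle at $A$ with $d(A,B)=d(A,C)=0.7$, so that $d(B,C)=\arccos(\cos^2 0.7)\approx 0.9459$ and the triangle's diameter is well below $\pi/2$; then
\begin{equation*}
d^2(A,B)+\sigma(1,0.7)\,d^2(A,C)-d^2(B,C)\;\approx\; 0.49+0.8311\cdot 0.49-0.8948\;\approx\; 0.0024\;>\;0\;=\;2\langle \expinv{A}{C},\expinv{A}{B}\rangle.
\end{equation*}
What \citealp{alimisis2021momentum} actually prove --- and the only form this paper ever uses, via $\sigma_0=\sigma(K,D_0)$ and $\sigma_1=\sigma(K,3D_1)$ --- evaluates $\sigma$ at an upper bound $D$ on the diameter of the triangle (or feasible set), not at the single side $d(A,B)$; the statement as transcribed in the lemma is a misquotation. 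With the diameter-based constant, your own intermediate inequality finishes the proof in one line and your Toponogov step becomes unnecessary: $d(\gamma(t),B)\le\max\{d(A,B),d(C,B)\}\le D$ by convexity, hence $2\int_0^1(1-t)\,\sigma(K,d(\gamma(t),B))\,dt \ge \sigma(K,D)\cdot 2\int_0^1(1-t)\,dt = \sigma(K,D)$. (The same caveat applies to your Hessian step: the diameter hypothesis rules out conjugate points, but smoothness of $d^2(\cdot,B)$ along $\gamma$ also tacitly uses uniqueness of minimizing geodesics, which the paper's Assumption 3 is designed to guarantee on $\K$.)
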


    In addtion, \cite{ahn2020nesterov} demonstrate bi-Lipschitzness of the exponential map on positive curved space.
    \begin{lemma}[\citealp{ahn2020nesterov}]\label{lemma:lipexp}
        Let $A,B,C$ be points on Riemannian manifold $\M$ with sectional curvatures upper bounded by $K \ge 0$. If $d(A,C) \le \frac{\pi}{2\sqrt{K}}$, then 
        \begin{align*}
            \| \expinv{A}{B}- \expinv{A}{C} \| \le \sqrt{(1+2K d^2(A,B))} d(B,C) \le \frac{2}{\sqrt{\sigma(K,d(A,B))}} d(B,C). 
        \end{align*}
    \end{lemma}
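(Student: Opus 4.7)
\textbf{Proof plan for Lemma \ref{lemma:lipexp}.} The plan is to view $\exp_A^{-1}$ as a smooth map whose differential can be bounded via Jacobi fields, and then estimate $\|\exp_A^{-1} B - \exp_A^{-1} C\|$ by integrating this bound along a minimizing geodesic from $B$ to $C$. Concretely, let $\sigma:[0,1]\to\M$ be the minimizing geodesic with $\sigma(0)=B$, $\sigma(1)=C$ and constant speed $\|\dot\sigma(s)\|=d(B,C)$, and define $w(s)=\exp_A^{-1}\sigma(s)\in T_A\M$. Then the fundamental theorem of calculus gives $\|w(1)-w(0)\|\le \int_0^1 \|w'(s)\|\,ds = \int_0^1 \|d(\exp_A^{-1})_{\sigma(s)}\dot\sigma(s)\|\,ds$, so the task reduces to controlling the operator norm of $d(\exp_A^{-1})_p$ for $p=\sigma(s)$.

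To bound $\|d(\exp_A^{-1})_p\|_{\mathrm{op}}$, I would use the standard identification of $d(\exp_A)_x$ (where $x=\exp_A^{-1}p$) with a Jacobi field map: for any $u\in T_A\M\simeq T_x(T_A\M)$, the vector $d(\exp_A)_x u$ equals $J(1)$ where $J$ is the Jacobi field along $t\mapsto \exp_A(tx)$ with $J(0)=0$ and $\nabla_t J(0)=u$. Splitting $u$ into components parallel and perpendicular to $x$: the parallel component passes through isometrically (length $\|u_\parallel\|$), and the perpendicular component is stretched by at least $\mathbf{S}(K,r)/r$ by Lemma \ref{lemma:jacobi}(i), where $r=\|x\|=d(A,p)<\pi/(2\sqrt K)$. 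Inverting gives $\|d(\exp_A^{-1})_p\|_{\mathrm{op}}\le r/\mathbf{S}(K,r)=\sqrt{K}r/\sin(\sqrt K r)$. The first elementary inequality to establish is then $\sqrt K r/\sin(\sqrt K r)\le \sqrt{1+2K r^2}$ for $\sqrt K r\in [0,\pi/2]$; squaring, this reduces to $\theta^2\cos(2\theta)\le \sin^2\theta$, which is immediate for $\theta\in[\pi/4,\pi/2]$ (the left side is nonpositive) and follows from elementary Taylor bounds for $\theta\in[0,\pi/4]$.

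The main obstacle is that after integration this only yields $\|w(1)-w(0)\|\le d(B,C)\int_0^1\sqrt{1+2K d^2(A,\sigma(s))}\,ds$, where $d(A,\sigma(s))$ generically differs from $d(A,B)$. The fix is to exploit the hypothesis $d(A,C)\le \pi/(2\sqrt K)$: within the open ball of radius $\pi/(2\sqrt K)$ around $A$ (which is geodesically convex whenever all sectional curvatures are $\le K$), the squared distance $d^2(A,\cdot)$ is a g-convex function, so its restriction to the geodesic $\sigma$ attains its maximum at an endpoint. Relabeling so that $B$ is the farther endpoint (or symmetrizing), we obtain $d(A,\sigma(s))\le d(A,B)$ for all $s$, and the first claimed inequality follows.

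For the second inequality $\sqrt{1+2K r^2}\le 2/\sqrt{\sigma(K,r)}$, substitute $\theta=\sqrt K r$ and use $\sigma(K,r)=\theta/\tan\theta$, reducing the claim to $\theta(1+2\theta^2)\le 4\tan\theta$ on $[0,\pi/2)$. At $\theta=0$ both sides vanish and $(4\tan\theta)'=4\sec^2\theta$ dominates $(\theta+2\theta^3)'=1+6\theta^2$ initially, while $\tan\theta\to\infty$ as $\theta\to\pi/2$; a direct derivative comparison or Taylor argument closes the case. The hardest single step, in my view, is the convexity-of-$d^2(A,\cdot)$ argument used to replace $d(A,\sigma(s))$ by $d(A,B)$, since it relies crucially on the upper-curvature bound $K$ and on the diameter assumption that keeps $\sigma$ inside a convex ball centered at $A$.
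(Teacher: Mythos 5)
Your overall strategy (integrate the differential of $\expinv{A}{}$ along the minimizing geodesic $\sigma$ from $B$ to $C$, bound $\|d(\expinv{A}{})_p\|_{\mathrm{op}}\le \sqrt{K}r/\sin(\sqrt{K}r)$ with $r=d(A,p)$ via the Jacobi-field identification and Lemma \ref{lemma:jacobi}(i), then finish with scalar estimates) is sound in its local steps: the radial/normal splitting is legitimate because a Jacobi field vanishing at $0$ keeps its radial and normal components orthogonal, and both elementary inequalities check out ($\theta^2/\sin^2\theta\le 1+2\theta^2$ is equivalent to $\theta^2\cos 2\theta\le\sin^2\theta$, and $\theta(1+2\theta^2)\le 4\tan\theta$ follows from $\tan\theta\ge\theta+\theta^3/3$ on $[0,\pi/2)$). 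Note, however, that the paper offers no proof of this lemma at all—it is imported verbatim from \citep{ahn2020nesterov}—so your attempt can only be measured against that source, whose argument is of a different type: it works with the law of cosines in $T_A\M$ and a comparison with the model sphere of curvature $K$, in which the two side lengths $d(A,B)$ and $d(A,C)$ are tracked separately; that is precisely how the asymmetric factor $\sqrt{1+2Kd^2(A,B)}$ arises.

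The genuine gap sits exactly at the step you flag as hardest, and your fix for it does not work. Convexity of $d^2(A,\cdot)$ along $\sigma$ gives only $d(A,\sigma(s))\le\max\{d(A,B),d(A,C)\}$, and ``relabeling so that $B$ is the farther endpoint'' is not available: the statement is \emph{not} symmetric in $B$ and $C$—the distortion factor must involve $d(A,B)$ while the hypothesis constrains $d(A,C)$—so swapping names proves a different lemma. In the regime $d(A,C)>d(A,B)$ (say $B$ close to $A$ and $C$ near the boundary of the $\tfrac{\pi}{2\sqrt{K}}$-ball) your integral yields only $\sqrt{1+2Kd^2(A,C)}\,d(B,C)$, which does not imply the claimed bound. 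Two smaller remarks: (i) your sub-claim that the open ball of radius $\tfrac{\pi}{2\sqrt{K}}$ is geodesically convex ``whenever all sectional curvatures are $\le K$'' is false without an injectivity-radius hypothesis (a flat cylinder has curvature $0\le K$ for any $K>0$, yet large balls wrap around and $d^2(A,\cdot)$ is not convex on them); in this paper the issue is neutralized by Assumption \ref{asm:bound}, which makes $\expinv{x}{}$ globally defined on $\K$. (ii) The weaker, symmetric bound your method does prove—factor $\sqrt{1+2K\max\{d^2(A,B),d^2(A,C)\}}$—would in fact suffice for every invocation in this paper, since the lemma is only ever applied to points of $\K$ with the uniform constant $2/\sqrt{\sigma_0}=2/\sqrt{\sigma(K,D_0)}$ (e.g., in the proof of Theorem \ref{thm: reg-c}), and $\sigma(K,\cdot)$ is decreasing; but as a proof of Lemma \ref{lemma:lipexp} as stated, the argument is incomplete.
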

     Finally, we introduce a technique that bounds the metric distortion by parallel transport.
    \begin{lemma}[\citealp{alimisis2021momentum}]\label{lemma:H}
        Let $\M$ be a Riemannian manifold with sectional curvature lower bounded by $\kappa$ and upper bounded by $K$. If a geodesic triangle $\triangle ABC$ admits a diameter less than $D(K)$, then there exists a point $p$ in the edge $AC$ such that
        \begin{align*}
            \| \expinv{A}{B} - \Gamma_C^A \expinv{C}{B} \| = -\Gamma_p^A \big(\nabla^2 (-\frac{1}{2} d^2(C,p) ) \big) \Gamma_A^p \expinv{A}{C},
        \end{align*}
    where $\big(\nabla^2 (-\frac{1}{2} d^2(C,p) ) \big)$ is the hessian of the function $-\frac{1}{2} d^2(C,\cdot)$ at point $p$.
    \end{lemma}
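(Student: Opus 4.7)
} My approach is to reduce the identity to a mean-value-type argument along the geodesic connecting $A$ and $C$, applied to a tangent-space valued curve built from parallel transports of the inverse exponential map. Let $\gamma:[0,1]\to\M$ denote the minimizing geodesic with $\gamma(0)=A$ and $\gamma(1)=C$; the diameter condition $D(K)$ together with the Jacobi comparison result (Lemma \ref{lemma:jacobi}) guarantees there are no conjugate points, so this geodesic is unique and, crucially, the squared distance function $\tfrac12 d^2(B,\cdot)$ is smooth in a neighborhood of the triangle $\triangle ABC$. I will also use the standard fact that its gradient is $\nabla\bigl(\tfrac12 d^2(B,\cdot)\bigr)(x) = -\expinv{x}{B}$, so its Hessian is exactly the covariant derivative of $-\expinv{\cdot}{B}$.

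Next I would define the $T_A\M$-valued curve
\begin{equation*}
V(t) \;=\; \Gamma_{\gamma(t)}^{A}\, \expinv{\gamma(t)}{B},
\end{equation*}
which satisfies $V(0)=\expinv{A}{B}$ and $V(1)=\Gamma_C^A \expinv{C}{B}$, so the vector we want to estimate is precisely $V(0)-V(1)$. Since parallel transport is an isometry of tangent spaces and commutes with covariant differentiation along $\gamma$, I compute
\begin{equation*}
V'(t) \;=\; \Gamma_{\gamma(t)}^{A}\, \nabla_{\dot\gamma(t)}\bigl(\expinv{\gamma(t)}{B}\bigr)
\;=\; -\,\Gamma_{\gamma(t)}^{A}\, \nabla^{2}\!\bigl(\tfrac12 d^{2}(B,\cdot)\bigr)\big|_{\gamma(t)}\, \dot\gamma(t),
\end{equation*}
and because $\dot\gamma$ is parallel along $\gamma$, $\dot\gamma(t)=\Gamma_{A}^{\gamma(t)}\expinv{A}{C}$. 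Substituting yields
\begin{equation*}
V'(t) \;=\; \Gamma_{\gamma(t)}^{A}\, \nabla^{2}\!\bigl(-\tfrac12 d^{2}(B,\cdot)\bigr)\big|_{\gamma(t)}\, \Gamma_{A}^{\gamma(t)}\, \expinv{A}{C}.
\end{equation*}

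Finally I apply the mean value theorem (componentwise in any fixed orthonormal frame of $T_A\M$, or equivalently via $V(0)-V(1)=-\int_0^{1}V'(t)\,dt$ combined with the integral form of the mean value theorem) to produce a parameter $s\in[0,1]$ and a corresponding point $p:=\gamma(s)$ on the edge $AC$ realizing the claimed identity. The technical subtlety I expect to be the main obstacle is making this ``mean value point'' step rigorous: the vector-valued mean value theorem in its naive ``$V(0)-V(1)=-V'(s)$'' form fails in general, so I would either invoke the integral representation and absorb the averaged Hessian operator into a single evaluation along $AC$ by continuity (using that $\nabla^{2}\bigl(-\tfrac12 d^{2}(B,\cdot)\bigr)\big|_{\gamma(t)}$ is continuous in $t$ and the image of the unit ball under the averaged operator is achieved at some interior $t$), or else state the conclusion as an integral estimate and let $p$ be any maximizer of $\|V'(t)\|$, which is sufficient for every downstream use of the lemma in the main text.
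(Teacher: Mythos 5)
The paper offers no proof of Lemma \ref{lemma:H} for you to be compared against: it is imported as a black-box technical tool from \citealp{alimisis2021momentum}, so your plan has to be judged on its own merits and against how the lemma is actually consumed later in the paper. Your core computation is correct and is indeed the standard route: setting $V(t)=\Gamma_{\gamma(t)}^{A}\expinv{\gamma(t)}{B}$ along the geodesic $\gamma$ from $A$ to $C$, the identity $\nabla\bigl(-\tfrac{1}{2}d^{2}(B,\cdot)\bigr)(x)=\expinv{x}{B}$ gives
\begin{align*}
\expinv{A}{B}-\Gamma_{C}^{A}\expinv{C}{B}=V(0)-V(1)=\int_{0}^{1}\Gamma_{\gamma(t)}^{A}\,\nabla^{2}\bigl(\tfrac{1}{2}d^{2}(B,\cdot)\bigr)\big|_{\gamma(t)}\,\Gamma_{A}^{\gamma(t)}\expinv{A}{C}\,dt.
\end{align*}
In writing this you have also, correctly, repaired two defects in the printed statement: the Hessian must be that of $-\tfrac{1}{2}d^{2}(B,\cdot)$, not of $-\tfrac{1}{2}d^{2}(C,\cdot)$ --- the paper's own invocations confirm the distance is measured to the apex $B$ while $p$ lies on the edge $AC$, e.g.\ the bound $d^{2}(\hat x_{t},p)K_{m}\|\expinv{x_{t-1}}{x_{t}}\|$ in Appendix \ref{app:correct} and the bound $d(\hat z_{t+1},p_{1})\le l_t$ in the proof of Lemma \ref{lemma:key} --- and the norm on the left-hand side is spurious, since the claimed identity is between vectors.

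Your worry about the mean-value step is exactly the right one, but of your two repairs only the second survives scrutiny. Repair (a) fails: the average $\int_{0}^{1}H(t)w\,dt$ of the continuous curve $t\mapsto H(t)w$ (with $H(t)$ the transported Hessian operator and $w=\expinv{A}{C}$) need not be attained at any single parameter value; one can exhibit continuous families of self-adjoint, positive-definite operators with spectra in the required window for which $\|H(t)w\|$ is constant in $t$ while $\|\int_{0}^{1}H(t)w\,dt\|$ is strictly smaller, so not even the norm form of the single-point equality is recoverable by a continuity or intermediate-value argument. Repair (b), upgraded from norms to operators, is the correct statement: the identity holds exactly with the averaged operator $\bar H=\int_{0}^{1}H(t)\,dt$, which is self-adjoint with spectrum in $[\sigma(K,d(B,p)),\,\zeta(\kappa,d(B,p))]$ where $p=\gamma(t^{*})$ maximizes $d(B,\gamma(\cdot))$ over the (compact) edge; this uses the Hessian comparison theorem pointwise, the monotonicity of $\sigma$ (decreasing) and $\zeta$ (increasing) in the distance, and the fact that quadratic-form bounds pass through the integral. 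Your literal choice of $p$ as a maximizer of $\|V'(t)\|$ only yields Corollary \ref{lemma:tau}(i); part (ii) needs the operator-level bound $\|\bar H-\mathrm{Id}\|\le\max\{\zeta-1,1-\sigma\}$, which the spectral formulation delivers. Since every downstream use --- Corollary \ref{lemma:tau}, the regret proof of Theorem \ref{thm: reg-c}, and the holonomy estimates of Lemma \ref{lemma:key} --- consumes only these spectral bounds and never the literal single-point equality, your plan, with repair (b) in operator form, proves everything the paper needs; the equality as printed should be read as an abuse of notation inherited from the cited source rather than something your (or any) mean-value argument will deliver.
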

    
    We denote $H_{A,p}^c$ as the operator $-\Gamma_p^A \big(\nabla^2 (-\frac{1}{2} d^2(C,p) ) \big) \Gamma_A^p$. From the hessian comparison theorem \citep{lee2018introduction,alimisis2020continuous}, we know that all the eigenvalues of $H_{A,p}^c$ are in the range $[\sigma(K,d(C,p)) , \zeta(\kappa,d(C,p)) ]$. Since 
    \begin{align*}
        \max\big\{{\zeta(\kappa,d(C,p))-1, 1-\sigma(K,d(C,p))}\big\} \le \max({|\kappa|,|K|}) d^2(C,p),
    \end{align*} we have the following corollary.
    \begin{corollary}\label{lemma:tau}
        Let $\M$ be a Riemannian manifold with sectional curvature lower bounded by $\kappa$ and upper bounded by $K$. If a geodesic triangle $\triangle ABC$ has diameter $D\le D(K)$, then there exists a point $p$ lying in the edge $AC$ such that
        \begin{itemize}
            \item [(i)]  $ \| \expinv{A}{B} - \Gamma_C^A \expinv{C}{B} \| \le \zeta(\kappa,D) \|\expinv{A}{C}\|$;
            \item [(ii)] $\| \expinv{A}{B} - \Gamma_C^A \expinv{C}{B} - \expinv{A}{C}  \| \le d^2(C,p) \max\{|\kappa|,|K|\}  \|\expinv{A}{C}\|$.
        \end{itemize}
    \end{corollary}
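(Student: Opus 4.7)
The plan is to obtain both bounds as direct consequences of Lemma \ref{lemma:H} together with the spectral bounds on the Hessian-induced operator noted in the paragraph just preceding the corollary. First, I would apply Lemma \ref{lemma:H} to the geodesic triangle $\triangle ABC$, producing a point $p$ on the geodesic segment $AC$ such that, as a vector identity in $T_A\M$,
\begin{align*}
\expinv{A}{B} - \Gamma_C^A \expinv{C}{B} = H_{A,p}^C \, \expinv{A}{C},
\end{align*}
where $H_{A,p}^C$ is the self-adjoint operator on $T_A\M$ introduced immediately after Lemma \ref{lemma:H}. This single identity reduces both bounds to operator-norm estimates on $H_{A,p}^C$ (respectively, on $H_{A,p}^C - I$) applied to the vector $\expinv{A}{C}$.

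For part (i), I would invoke the fact recalled in the paragraph above the corollary that every eigenvalue of $H_{A,p}^C$ lies in $[\sigma(K,d(C,p)),\,\zeta(\kappa,d(C,p))]$; by self-adjointness, the operator norm is at most $\zeta(\kappa,d(C,p))$. Since $p$ lies on the segment $AC$, we have $d(C,p) \le d(C,A) \le D$, and $\zeta(\kappa,\cdot)$ is monotonically non-decreasing on $[0,\infty)$ (for $\kappa<0$ this reduces to the elementary fact that $x/\tanh x$ is increasing for $x>0$, which follows from $\sinh x \cosh x > x$). Combining these observations yields $\|H_{A,p}^C \expinv{A}{C}\| \le \zeta(\kappa,D)\,\|\expinv{A}{C}\|$, which is exactly (i).

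For part (ii), I would rewrite the left-hand side as $(H_{A,p}^C - I)\,\expinv{A}{C}$. The eigenvalues of $H_{A,p}^C - I$ lie in $[\sigma(K,d(C,p))-1,\,\zeta(\kappa,d(C,p))-1]$, so its operator norm is at most $\max\{\zeta(\kappa,d(C,p))-1,\,1-\sigma(K,d(C,p))\}$. The displayed inequality stated immediately before the corollary bounds this maximum by $\max\{|\kappa|,|K|\}\,d^2(C,p)$, which gives (ii). The main obstacle is really only bookkeeping rather than any genuine difficulty: Lemma \ref{lemma:H} has already absorbed the curvature-induced nonlinearity into a single Hessian operator, and the Hessian comparison theorem supplies its spectrum, so what remains is just to pass from eigenvalue bounds to operator-norm bounds via self-adjointness and to replace the dependence on $d(C,p)$ by the diameter bound $D$ through monotonicity.
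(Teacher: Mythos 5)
Your proposal is correct and takes essentially the same route as the paper, which likewise obtains the corollary directly from Lemma \ref{lemma:H} (read as the vector identity $\expinv{A}{B} - \Gamma_C^A \expinv{C}{B} = H_{A,p}^C\, \expinv{A}{C}$) combined with the eigenvalue range $[\sigma(K,d(C,p)),\,\zeta(\kappa,d(C,p))]$ from the Hessian comparison theorem for part (i) and the stated bound $\max\{\zeta(\kappa,d(C,p))-1,\,1-\sigma(K,d(C,p))\} \le \max\{|\kappa|,|K|\}\, d^2(C,p)$ applied to $H_{A,p}^C - I$ for part (ii). The only additions you make are routine details the paper leaves implicit, namely self-adjointness, monotonicity of $\zeta(\kappa,\cdot)$, and $d(C,p)\le D$.
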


\section{Proofs in Section \ref{sec:oco}  }\label{app:reg}

    \paragraph{Proof of Theorem \ref{thm: reg-c}} To ease the notation, we define $v_t = \sup_{x\in\K} \| \f_t(x)-\f_{t-1}(x) \|^2$ and $\nabla_t = \nabla\ft(x_t)$. By the g-convexity of $\f_{t+1}$, we have
       \begin{align}\label{eq:gc}
           \f_{t+1}(x_{t+1}) - \f_{t+1}(u_{t+1}) \le \langle \expinv{x_{t+1}}{u_{t+1}}, - \nabla_{t+1} \rangle.
        \end{align}
        Applying Lemma \ref{lemma:sigma} in the geodesic triangle $\triangle x_{t+1}u_{t+1} x_t$ we have,
        \begin{align*}
            2\langle \expinv{x_{t+1}}{u_{t+1}} , \expinv{x_{t+1}}{x_t} \ge d^2(x_{t+1},u_{t+1}) - d^2(x_{t},u_{t+1}) + \sigma_0 d^2(x_{t+1},x_t).
        \end{align*}
        Notice from Algorithm \ref{alg:R-OOGD}, we have $x_{t+1} = \exp_{x_t} (-2\eta \nabla_t + \eta \Gamma_{x_{t-1}}^{x_t} \nabla_{t-1}) $. This give us 
        \begin{align*}
            \expinv{x_{t+1}}{x_t} = \eta ( \Gamma_{x_t}^{x_{t+1}} (2\nabla_t - \Gamma_{x_{t-1}}^{x_t} \nabla_{t-1}) ).
        \end{align*}
        Thus, we have
        \begin{align} \label{eq: ge0}
            \nonumber 0  \le   \langle \expinv{x_{t+1}}{u_{t+1}} ,  &\Gamma_{x_t}^{x_{t+1}} (2\nabla_t - \Gamma_{x_{t-1}}^{x_t} \nabla_{t-1}) \rangle - \\
            & \qquad \frac{1}{2\eta} (d^2(x_{t+1},u_{t+1}) - d^2(x_{t},u_{t+1})) - \frac{\sigma_0}{2\eta} d^2(x_{t+1},x_t).
        \end{align}
        Combining \eqref{eq:gc} and \eqref{eq: ge0}, we have
        \begin{align}\label{eq:ftsum}
             \nonumber\f_{t+1}(x_{t+1}) - \f_{t+1}(u_{t+1}) &\le  \langle \expinv{x_{t+1}}{u_{t+1}}, - \nabla_{t+1} \rangle+ \langle \expinv{x_{t+1}}{u_{t+1}} ,  \Gamma_{x_t}^{x_{t+1}} (2\nabla_t - \Gamma_{x_{t-1}}^{x_t} \nabla_{t-1}) \rangle \\
             \nonumber& \quad -\frac{1}{2\eta} (d^2(x_{t+1},u_{t+1}) - d^2(x_{t},u_{t+1})) - \frac{\sigma_0}{2\eta} d^2(x_{t+1},x_t)\\
             \nonumber& = \langle \expinv{x_{t+1}}{u_{t+1}}, - \nabla_{t+1} +  \Gamma_{x_t}^{x_{t+1}} \nabla_t \rangle\\
             \nonumber& \quad - \langle \expinv{x_{t+1}}{u_{t+1}} ,  \Gamma_{x_t}^{x_{t+1}} (\nabla_t - \Gamma_{x_{t-1}}^{x_t} \nabla_{t-1}) \rangle \\
             & \quad -\frac{1}{2\eta} (d^2(x_{t+1},u_{t+1}) - d^2(x_{t},u_{t+1})) - \frac{\sigma_0}{2\eta} d^2(x_{t+1},x_t).
        \end{align}
        Considering the term $- \langle \expinv{x_{t+1}}{u_{t+1}} ,  \Gamma_{x_t}^{x_{t+1}} (\nabla_t - \Gamma_{x_{t-1}}^{x_t} \nabla_{t-1}) \rangle - \frac{\sigma}{2\eta}d^2(x_t,x_{t+1})$, we have
        \begin{align}\label{eq:start}
            \nonumber& - \langle \expinv{x_{t+1}}{u_{t+1}} ,  \Gamma_{x_t}^{x_{t+1}} (\nabla_t - \Gamma_{x_{t-1}}^{x_t} \nabla_{t-1}) \rangle - \frac{\sigma}{2\eta}d^2(x_t,x_{t+1})  \\
            \nonumber& \le  \langle -\expinv{x_{t+1}}{u_{t+1}} + \Gamma_{x_t}^{x_{t+1}}\expinv{x_t}{u_{t+1}} ,  \Gamma_{x_t}^{x_{t+1}} (\nabla_t - \Gamma_{x_{t-1}}^{x_t} \nabla_{t-1}) \rangle \\
            \nonumber& \quad + \langle -\expinv{x_t}{u_{t+1}} + \expinv{x_t}{u_t} ,  \nabla_t - \Gamma_{x_{t-1}}^{x_t} \nabla_{t-1} \rangle \\
            & \quad - \langle \expinv{x_t}{u_t} ,  \nabla_t - \Gamma_{x_{t-1}}^{x_t} \nabla_{t-1} \rangle - \frac{\sigma_0}{2\eta} d^2(x_{t+1},x_t).
        \end{align}
        By Lemma \ref{lemma:tau}, we have 
        \begin{align}
            \nonumber\langle &-\expinv{x_{t+1}}{u_{t+1}} + \Gamma_{x_t}^{x_{t+1}}\expinv{x_t}{u_{t+1}} ,  \Gamma_{x_t}^{x_{t+1}} (\nabla_t - \Gamma_{x_{t-1}}^{x_t} \nabla_{t-1}) \rangle \\
             \nonumber&\le \zeta_0 d(x_{t+1},x_t)\| \nabla_t - \Gamma_{x_{t-1}}^{x_t} \nabla_{t-1} \| \\
             &\le \frac{\zeta_0^2}{\sigma_0} \eta \|\nabla_t - \Gamma_{x_{t-1}}^{x_t} \nabla_{t-1} \|^2 + \frac{\sigma_0}{4\eta}d^2(x_t,x_{t+1}).
        \end{align}
        The latter inequality is due to the Young's inequality $\langle a,b\rangle \le \frac{\alpha\|a\|^2}{2} + \frac{\|b\|^2}{2\alpha}$. Also, By Lemma \ref{lemma:lipexp}, we have
        \begin{align}
             \nonumber\langle & -\expinv{x_t}{u_{t+1}} + \expinv{x_t}{u_t} ,  \nabla_t - \Gamma_{x_{t-1}}^{x_t} \nabla_{t-1} \rangle \\
             \nonumber&\le \frac{2}{\sqrt \sigma_0}d(u_t,u_{t+1}) \|\nabla_t - \Gamma_{x_{t-1}}^{x_t} \nabla_{t-1} \|\\
             \nonumber&\le \frac{1}{\sigma_0} \eta  \|\nabla_t - \Gamma_{x_{t-1}}^{x_t} \nabla_{t-1} \|^2 + \frac{d^2(u_{t+1},u_t)}{\eta}\\
             & \le \frac{1}{\sigma_0} \eta  \|\nabla_t - \Gamma_{x_{t-1}}^{x_t} \nabla_{t-1} \|^2 + \frac{d(u_{t+1},u_t)D}{\eta}.
        \end{align}
        Since $\ft$ is g-$L$-smooth, we have
        \begin{align} \label{eq:end}
            \nonumber\|\nabla_t - \Gamma_{x_{t-1}}^{x_t} \nabla_{t-1} \|^2 &\le  2 \| \nabla\ft(x_t) - \nabla\f_{t-1}(x_t) \|^2 + 2\| \nabla\f_{t-1}(x_t) - \Gamma_{x_{t-1}}^{x_t} \nabla_{t-1} \|^2 \\
            & \le 2v_t +  2L^2d^2(x_t,x_{t-1}).
        \end{align}
        Putting \eqref{eq:start}-\eqref{eq:end} together, we have
        \begin{align}\label{eq:tobesum}
            \nonumber\f_{t+1}(x_{t+1}) - \f_{t+1}(u_{t+1}) \le & \langle \expinv{x_{t+1}}{u_{t+1}} , -\nabla_{t+1} + \Gamma_{x_t}^{x_{t+1}}\nabla_t \rangle - \langle \expinv{x_{t}}{u_{t}} , -\nabla_{t} + \Gamma_{x_{t-1}}^{x_{t}}\nabla_{t-1} \rangle\\
            \nonumber & \quad +\frac{1}{2\eta}(d^2(x_{t},u_{t+1}) - d^2(x_{t+1},u_{t+1})) \\
            \nonumber& \quad + \frac{2(1+\zeta_0^2)}{\sigma_0} \eta v_t  +\frac{2L^2(1+\zeta_0^2)}{\sigma_0} \eta d(x_t,x_{t-1}) \\
            & \quad + \frac{d(u_t,u_{t+1})D}{\eta} - \frac{\sigma_0}{4\eta}d^2(x_t,x_{t+1}).
        \end{align}
        Summing \eqref{eq:tobesum} from $t=0$ to $T-1$ together, we have
        \begin{align*}
            \RegD(u_1,u_2,\dots,u_T) &= \sum_{t=0}^{T-1} \f_{t+1}(x_{t+1}) - \f_{t+1}(u_{t+1}) \\
            & \le  \sum_{t=0}^{T-1} \langle \expinv{x_{t+1}}{u_{t+1}} , -\nabla_{t+1} + \Gamma_{x_t}^{x_{t+1}}\nabla_t \rangle \\
            & \quad - \sum_{t=0}^{T-1} \langle \expinv{x_{t}}{u_{t}} , -\nabla_{t} + \Gamma_{x_{t-1}}^{x_{t}}\nabla_{t-1} \rangle \\
            & \quad + \sum_{t=0}^{T-1} \frac{1}{2\eta}d^2(x_{t},u_{t+1}) - \sum_{t=0}^{T-1} \frac{1}{2\eta} d^2(x_{t+1},u_{t+1})\\
            & \quad + \sum_{t=0}^{T-1} \frac{2L^2(1+\zeta_0^2)}{\sigma_0}\eta d(x_t,x_{t-1}) - \sum_{t=0}^{T-1} \frac{\sigma_0}{4\eta}d^2(x_t,x_{t+1}).
        \end{align*}
        Rearranging the summation, we can obtain
        \begin{align*}
             \RegD(u_1,u_2,\dots,u_T)& \le \frac{2(1+\zeta_0^2)}{\sigma_0}\eta V_T  +  \frac{D P_T}{\eta}\\
            & \le  \sum_{t=1}^{T} \langle \expinv{x_{t}}{u_{t}} , -\nabla_{t} + \Gamma_{x_{t-1}}^{x_{t}}\nabla_{t-1} \rangle -  \sum_{t=0}^{T-1} \langle \expinv{x_{t}}{u_{t}} , -\nabla_{t} + \Gamma_{x_{t-1}}^{x_{t}}\nabla_{t-1} \rangle \\
            & \quad +  \sum_{t=0}^{T-1} \frac{1}{2\eta}d^2(x_{t},u_{t+1}) - \sum_{t=1}^{T} \frac{1}{2\eta} d^2(x_{t},u_{t})\\
            & \quad + \sum_{t=0}^{T-1} \frac{4L^2\zeta_0^2}{\sigma_0}\eta d(x_t,x_{t-1}) - \sum_{t=1}^{T} \frac{\sigma_0}{4\eta}d^2(x_t,x_{t-1}) \\
            & \quad + \frac{4\zeta_0^2}{\sigma_0} \eta V_T  +  \frac{D P_T}{\eta}.\\
        \end{align*}
        Since $\eta \le \frac{\sigma_0}{4\zeta_0L}$, we have $\frac{4L^2\zeta_0^2\eta}{\sigma_0} \le  \frac{\sigma_0}{4\eta} $. Therefore, we have
        \begin{align*}
             \RegD(u_1,u_2,\dots,u_T) &= \sum_{t=0}^{T-1} \f_{t+1}(x_{t+1}) - \f_{t+1}(u_{t+1}) \\
            & \le  \langle \expinv{x_{T}}{u_{T}} , -\nabla_{T} + \Gamma_{x_{T-1}}^{x_{T}}\nabla_{T-1} \rangle -  \langle \expinv{x_{0}}{u_{0}} , -\nabla_{0} + \Gamma_{x_{-1}}^{x_{0}}\nabla_{-1} \rangle \\
            & \quad +  \frac{1}{2\eta}d^2(x_{0},u_{1}) - \frac{1}{2\eta} d^2(x_{T},u_{T})\\
            & \quad +  \frac{4L^2\zeta_0^2}{\sigma_0}\eta d(x_0,x_{-1}) - \frac{\sigma_0}{4\eta}d^2(x_T,x_{T-1}) \\
            & \quad + \frac{4\zeta_0^2}{\sigma_0} \eta V_T  +  \frac{2D P_T}{\eta}.\\
        \end{align*}
        As $x_{-1}=x_0=x_1$, we can see that $ \nabla_{0} = \nabla_{-1} $ and $d(x_0,x_{-1}) = 0$. In this way, we can see that 
        \begin{align*}
            \RegD(u_1,u_2,\dots,u_T) &= \sum_{t=0}^{T-1} \f_{t+1}(x_{t+1}) - \f_{t+1}(u_{t+1}) \\
            & \le  \langle \expinv{x_{T}}{u_{T}} , -\nabla_{T} + \Gamma_{x_{T-1}}^{x_{T}}\nabla_{T-1} \rangle +  \frac{1}{2\eta}d^2(x_{0},u_{1}) + \frac{4\zeta_0^2}{\sigma_0} \eta V_T  +  \frac{2D P_T}{\eta}\\
            & \le 2DG + \frac{D^2}{2\eta} + \frac{4\zeta_0^2}{\sigma_0} \eta V_T  +  \frac{2D P_T}{\eta}\\
            & \le \frac{D^2 + 2D P_T}{\eta} +  \frac{4\zeta_0^2}{\sigma_0}   \eta (V_T + G^2),
        \end{align*}
        which completes our proof. \hfill$\blacksquare$
    \paragraph{Proof of Theorem \ref{thm:aoogd}}
    We follow the idea to treat the dynamic regret by the meta-regret and expert-regret as in the work by \cite{hu2023minimizing}. For any $ i\le N $, it holds that
    \begin{align*}
        \sum_{t=1}^T \f_t(x_t) - \f_t(u_t) = \underbrace{\sum_{t=1}^T \f_t(x_t) - \f_t(x_{i,t})}_{\mathtt{meta-regret}} +  \underbrace{\sum_{t=1}^T \f_t(x_{t,i}) - \f_t(u_t)}_{\mathtt {expert-regret}}.
    \end{align*}
    Based on Theorem 2 of \cite{hu2023minimizing}, we obtain
    \begin{align*}
        \mathtt{meta}{\rm{-}}\mathtt{regret} &= \sum_{t=1}^T \f_t(x_t) - \f_t(x_{i,t})\\
        & \le \frac{2+\ln N}{\beta} + 3D_0^2\beta(V_T+G^2) + \\
        & \quad +\sum_{t=2}^T\big( 3\beta(D_0^4L^2+D_0^2G^2\zeta_0^2)- \frac{1}{4\beta} \big)\| w_t - w_{t-1}\|_1^2\\
        & \le \max\Big( 2\sqrt{3D_0^2(V_T+G^2)(2+\ln N)}, 2(2+\ln N)\sqrt{12(D_0^4L^2+D_0^2G^2\zeta_0^2)} \Big).
    \end{align*}
    
    Moreover, according to the dynamic regret in Theorem \ref{thm: reg-c}, for any index $i$ with step size $\eta_i$, we have:
    \begin{align*}
        \mathtt{expert}{\rm{-}}\mathtt{regret} &\le \sum_{t=1}^T \f_t(x_{t,i}) - \f_t(u_t)\\
        &\le \frac{D_0^2+2D_0P_T}{\eta_i} + \frac{4\zeta_0^2}{\sigma_0}\eta_i(V_T+G^2).\\
    \end{align*}
    Our step size pool $\mathcal{H} = \Big\{ \eta_i = 2^{i-1} \sqrt{\frac{D_0^2}{16\zeta_0^2 G^2 T}} \Big\}$ ensures that
    \begin{align*}
        \begin{cases}
            \min_{\mathcal{H}} = \sqrt{\frac{D_0^2}{16\zeta_0^2 G^2 T}} \le \sqrt{\frac{D_0^2+2D_0P_T}{4\zeta_0^2 (G^2+V_T)}},\\
            \max_{\mathcal{H}} \ge \frac{\sigma_0}{\zeta_0L}.
        \end{cases}
    \end{align*}
    So for the optimal step size $\eta^* = \min(\sqrt{\frac{D_0^2+P_T}{4\zeta_0^2 (G^2+V_T)}}, \frac{\sigma_0}{\zeta_0L})$, there exists $i^* \le N$ such that $\eta_{i^*} \le \eta^* \le 2 \eta_{i^*}$. Taking $i=i^*$, we have
        \begin{align*}
        \mathtt{expert}{\rm{-}}\mathtt{regret} &\le \sum_{t=1}^T \f_t(x_{t,i}) - \f_t(u_t)\\
        &\le \frac{D_0^2+2D_0P_T}{\eta_{i^*}} + \frac{4\zeta_0^2}{\sigma_0}\eta_{i^*}(V_T+G^2)\\
        &\le \frac{4\zeta_0^2}{\sigma_0}(V_T+G^2) \sqrt{\frac{D_0^2+2D_0P_T}{4\zeta_0^2 (G^2+V_T)}} \\
        &\quad + (D_0^2+2D_0P_T) (\sqrt{\frac{4\zeta_0^2 (G^2+V_T)}{D_0^2+2D_0P_T}} + \frac{4\zeta_0 L}{\sigma_0})\\
        &\le \frac{4\zeta_0}{\sqrt{\sigma_0}}\sqrt{(D_0^2+2D_0P_T)((V_T+G^2) )} + (D_0^2+2D_0P_T) \frac{4\zeta_0 L}{\sigma_0}.
    \end{align*}
    Combining meta-regret and expert-regret together, we finally get
    \begin{align*}
        \sum_{t=1}^T \f_t(x_t) - \f_t(u_t) &\le \max\Big( 2\sqrt{3D_0^2(V_T+G^2)(2+\ln N)}, 2(2+\ln N)\sqrt{12(D_0^4L^2+D_0^2G^2\zeta_0^2)} \Big) \\
        & \quad + \frac{4\zeta_0}{\sqrt{\sigma_0}}\sqrt{(D_0^2+2D_0P_T)((V_T+G^2) )} + (D_0^2+2D_0P_T) \frac{4\zeta_0 L}{\sigma_0} \\
        & = \bo(\sqrt{((V_T+1)\ln N)\ln N})+ \frac{\zeta_0}{\sqrt{\sigma_0}}bo(\sqrt{(1+P_T+V_T)(1+P_T)})\\
        & = \frac{\zeta_0}{\sqrt{\sigma_0}}\bo(\sqrt{(1+P_T+V_T)(1+P_T)}),
    \end{align*}
    which completes our proof. \hfill$\blacksquare$
    
\section{Computing the Corrected R-OOGD}\label{app:correct}       
 We first recall the corrected version of ROGD
\begin{align*}
    \begin{cases}
         x_{t+1} = \exp_{x_t}(-2\eta \nabla_t+\expinv{x_t}{\hat{x}_{t}}) \\
         \hat{x}_t = \exp_{x_{t-1}}(-\eta\nabla_{t-1}+\expinv{x_{t-1}}{\hat x_{t-1}}).
    \end{cases}
\end{align*}
To analyse the regret bound of the corrected version of R-OOGD, we follow \eqref{eq: ge0} and get
\begin{align*}
    0 \le \langle  \expinv{x_{t+1}}{x}, 
    \Gamma_{x_{t+1}}^{x_t} (-2\eta \nabla_t+\expinv{x_t}{\hat{x}_{t}}) \rangle + \frac{1}{2\eta} (d^2(x_t,x) -d^2(x_{t+1},x)) - \frac{\sigma_0}{2\eta}d^2(x_{t+1},x_t).
\end{align*}
Thus, by g-convexity we have,
\begin{align}\label{eq:add1}
    \nonumber \f_{t+1}(x_{t+1}) - \f_{t+1}(x) & \le \langle \expinv{x_{t+1}}{x}, -\nabla_{t+1}\rangle \\
    \nonumber& \le \langle \expinv{x_{t+1}}{x}, -\nabla_{t+1}+ \Gamma_{x_t}^{x_{t+1}} (-2\eta \nabla_t+\expinv{x_t}{\hat{x}_{t}}) \rangle \\
    \nonumber& \quad + \frac{1}{2\eta} (d^2(x_t,x) -d^2(x_{t+1},x))+\frac{\sigma_0}{2\eta}d^2(x_{t+1},x_t)\\
    \nonumber& = \langle \expinv{x_{t+1}}{x}, \Gamma_{x_t}^{x_{t+1}}(\nabla_t-\Gamma_{x_{t-1}}^{x_t}\nabla_{t-1}) \rangle \\
    \nonumber&- \langle \expinv{x_{t+1}}{x}, \nabla_{t+1}-\Gamma_{x_t}^{x_{t+1}} \nabla_t \rangle \\
    \nonumber& \quad + \frac{1}{2\eta} (d^2(x_t,x) -d^2(x_{t+1},x)) - \frac{\sigma_0}{2\eta}d^2(x_{t+1},x_t)\\
    & \quad + \langle \expinv{x_{t+1}}{x}, -\Gamma_{x_{t}}^{x_{t+1}} \big( \Gamma_{x_{t-1}}^{x_t} \eta \nabla_{t-1}+  \expinv{x_t}{\hat x_{t}} \big)\rangle.
\end{align}
The expression \eqref{eq:add1} follows from the proof of Theorem \ref{thm: reg-c} except the term 
\begin{align*}
     \langle \expinv{x_{t+1}}{x}, -\Gamma_{x_{t}}^{x_{t+1}} \big( \Gamma_{x_{t-1}}^{x_t} \eta \nabla_{t-1}+  \expinv{x_t}{\hat x_{t}} \big)\rangle.
\end{align*} 
Applying Corollary \ref{lemma:tau} in the geodesic triangle $\triangle x_{t-1}x_t\hat x_t$, we have
\begin{align*}
 & \quad \|  \expinv{x_{t-1}}{\hat{x}_t} - \expinv{x_{t-1}}{x_t} - \Gamma_{x_t}^{x_{t-1}} \expinv{x_t}{\hat x_{t}} \| \\
 &= \|2\eta\nabla_{t-1} - \expinv{x_{t-1}}{\hat x_{t-1}} - \eta\nabla_{t-1} + \expinv{x_{t-1}}{\hat x_{t-1}} - \Gamma_{x_t}^{x_{t-1}} \expinv{x_t}{\hat x_{t}}\|\\
 & \le d^2(\hat x_t,p) K_m \|\expinv{x_{t-1}}{x_t}\|,
\end{align*}
where $p$ lies in the geodesic $x_{t-1}x_t$. Since 
\begin{align*}
    d(\hat x_t,p) &\le d(\hat x_t,x_{t-1})+ d(x_{t-1},x_t) \\
                  &\le \| -2\eta\nabla_{t-1} + \eta\nabla_{t-2} -\eta\nabla_{t-2} +\expinv{x_{t-1}}{\hat x_{t-1}}\| \\
                  & \quad + \| -\eta\nabla_{t-1} + \eta\nabla_{t-2} -\eta\nabla_{t-2} +\expinv{x_{t-1}}{\hat x_{t-1}}\|\\
                  &\le 5\eta G + 2A_{t-1},
\end{align*} and
\begin{align*}
    \|\expinv{x_{t-1}}{x_t}\| \le 3\eta G+A_{t-1},
\end{align*}
we have:
\begin{align}\label{eq:ffffff}
    A_t \le K_m ( 5\eta G + 2A_{t-1})^2 (3\eta G+A_{t-1}),
\end{align}
indicating that the distortion in iteration $t-1$ evolves and accumulates in the distortion in iteration $t$. In the worst case scenario where $\eta G = 0.1$ and $K_m=1$, the equality always holds. We observe that $A_t \to \infty$, which implies that the corrected ROGD fails to achieve sublinear static regret.

\section{Proof of Theorem \ref{thm: avg-c}}\label{app:avg}
We first prove Lemma \ref{lemma:bdd}.
\paragraph{Proof of Lemma \ref{lemma:bdd}}
We prove by induction on $z_t = (x_t,y_t)$. The base case $d(z_0,z^*) \le D_1 \le 2D_1$ is straightforwardly hold. Then we assume that there exists a $K_0$ such that all $d(z_t,z^*) \le  2D_1$ holds for all $t \leq K_0$. Then we carry out induction step on $K_0+1$. The distance $d(z_{K_0+1},z^*)$ can be first bounded as
\begin{align*}
    d(z_{K_0+1},z^*) \le d(z_{K_0},z^*) + d(z_{K_0},z_{K_0+1}) \le 2D_1 + 3\eta G \le 3D_1.
\end{align*}

So, by setting $D_0=3D_1$, $\ft(x) = \f(x,y_t)$ and plugging in $u_1=u_2=\dots=u_n = x^*$ in the analysis of Theorem~\ref{thm: reg-c}, the R-OOGD for player-$\mathtt X$ holds for all $t\le K_0$
\begin{align}\label{eq:regx}
    \nonumber &\langle \expinv{x_{t+1}}{x^*}, -\nabla_x \f(x_{t+1},y_{t+1}) \rangle \\
    \nonumber &\le \langle \expinv{x_{t+1}}{x^*}, \big( \nabla_x \f(x_{t+1},y_{t+1}) - \Gamma_{x_t}^{x_{t+1}} \nabla_y\f(x_t,y_t) - \big( \nabla_x \f(x_{t},y_{t}) \big) - \Gamma_{x_{t-1}}^{x_{t}} \f(x_{t-1},y_{t-1}) \big) \rangle\\
    \nonumber& \quad + \frac{1}{2\eta} (d^2(x_t,x^*) - d^2(x_{t+1},x^*))-\frac{\sigma_1}{2\eta} d^2(x_t,x_{t+1})\\
    \nonumber& \le \langle \expinv{x_{t+1}}{x^*}, \nabla_x \f(x_{t+1},y_{t+1}) - \Gamma_{x_t}^{x_{t+1}} \f(x_t,y_t) \rangle \\
    \nonumber& \quad -  \langle \expinv{x_{t}}{x^*}, \nabla_x \f(x_{t},y_{t}) - \Gamma_{x_{t-1}}^{x_{t}} \f(x_{t-1},y_{t-1}) \rangle + \frac{1}{2\eta} (d^2(x_t,x^*) - d^2(x_{t+1},x^*))\\
    & \quad  -\frac{\sigma_1}{2\eta} d^2(x_t,x_{t+1}) + \zeta_1 \| \nabla_x \f(x_{t},y_{t}) - \Gamma_{x_{t-1}}^{x_{t}} \f(x_{t-1},y_{t-1}) \| d(x_t,x_{t+1}).
\end{align}
Similarly, the player-$\mathtt Y$ holds for all $t\le K_0$
\begin{align}\label{eq:regy}
    \nonumber &\langle \expinv{y_{t+1}}{y^*}, \nabla_y \f(x_{t+1},y_{t+1}) \rangle \\
    \nonumber& \le \langle \expinv{y_{t+1}}{y^*}, \nabla_y \f(x_{t+1},y_{t+1}) - \Gamma_{y_t}^{y_{t+1}} \nabla_y \f(x_t,y_t) \rangle \\
    \nonumber& \quad - \langle \expinv{y_{t}}{y^*}, \nabla_y \f(x_{t},y_{t}) - \Gamma_{y_{t-1}}^{y_{t}} \nabla_y \f(x_{t-1},y_{t-1}) \rangle + \frac{1}{2\eta} (d^2(y_t,y^*) - d^2(y_{t+1},y^*))\\
    &\quad  -\frac{\sigma_1}{2\eta} d^2(y_t,y_{t+1})  + \zeta_1 \| \nabla_y \f(x_{t},y_{t}) - \Gamma_{y_{t-1}}^{y_{t}} \nabla_y \f(x_{t-1},y_{t-1}) \| d(y_t,y_{t+1}).
\end{align}
Adding \eqref{eq:regx} and \eqref{eq:regy} together, for all $t\le K_0+1$,  we have 
  \begin{align}\label{eq:sumreg}
      \nonumber0 &\le  \langle \expinv{z_{t+1}}{z^*} , - \F(z_{t+1}) \rangle \\
      \nonumber& \le \langle \expinv{z_{t+1}}{z^*} , - \F(z_{t+1}) + \Gamma_{z_{t}}^{z_{t+1}} \F(z_t) \rangle - \langle \expinv{z_{t}}{z^*} , - \F(z_{t}) + \Gamma_{z_{t-1}}^{z_{t}} \F(z_{t-1}) \rangle \\
      \nonumber&\quad + \frac{1}{2\eta} (d^2(x_t,x^*) + d^2(y_t,y^*) ) - \frac{1}{2\eta} (d^2(x_{t+1},x^*) + d^2(y_{t+1},y^*) )\\
      \nonumber&\quad + \zeta_1 \| \nabla_y \f(x_{t},y_{t}) - \Gamma_{y_{t-1}}^{y_{t}} \nabla_y \f(x_{t-1},y_{t-1}) \| d(y_t,y_{t+1}) \\
      \nonumber&\quad + \zeta_1 \| \nabla_x \f(x_{t},y_{t}) - \Gamma_{x_{t-1}}^{x_{t}} \f(x_{t-1},y_{t-1}) \| d(x_t,x_{t+1})  \\
      &\quad -\frac{\sigma_1}{2\eta} d^2(x_t,x_{t+1}) -\frac{\sigma_1}{2\eta} d^2(y_t,y_{t+1}).
  \end{align}
  Taking Young's inequality with  
  \begin{align*}
  \begin{cases}
      a = ( d(x_t,x_{t+1}),  d(y_t,y_{t+1}) ) \\
      b =(\| \nabla_x \f(x_{t},y_{t}) - \Gamma_{x_{t-1}}^{x_{t}} \f(x_{t-1},y_{t-1}) \| , \| \nabla_y \f(x_{t},y_{t}) - \Gamma_{y_{t-1}}^{y_{t}} \nabla_y\f(x_{t-1},y_{t-1}) \| ), \\
      \alpha = L,
  \end{cases}
  \end{align*} we have
    \begin{align}\label{eq:smooth}
        \nonumber a \cdot b & \le  \frac{L}{2} (d^2(x_t,x_{t+1})+  d^2(y_t,y_{t+1})) \\
        \nonumber &\quad+\frac{1}{2L}(\| \nabla_x \f(x_{t},y_{t}) - \Gamma_{x_{t-1}}^{x_{t}} \f(x_{t-1},y_{t-1}) \|^2 +  \| \nabla_y \f(x_{t},y_{t}) - \Gamma_{y_{t-1}}^{y_{t}} \nabla_y \f(x_{t-1},y_{t-1}) \|^2).
        \\ & \le \frac{L}{2} d^2(z_t,z_{t+1}) + \frac{1}{2L} L^2 d^2(z_{t-1},z_t) =  \frac{L}{2} (d^2(z_t,z_{t+1}) + d^2(z_{t-1},z_t)).
    \end{align}
  Plugging \eqref{eq:smooth} into \eqref{eq:sumreg} yields
  \begin{align}
         \nonumber0 &\le  \langle \expinv{z_{t+1}}{z^*} , - \F(z_{t+1}) \rangle \\
       \nonumber & \le \langle \expinv{z_{t+1}}{z^*} , - \F(z_{t+1}) + \Gamma_{z_{t}}^{z_{t+1}} \F(z_t) \rangle - \langle \expinv{z_{t}}{z^*} , - \F(z_{t}) +   \nonumber\Gamma_{z_{t-1}}^{z_{t}} \F(z_{t-1}) \rangle \\
      &\quad + \frac{1}{2\eta} (d^2(z_t,z^*) - d^2(z_{t+1},z^*)) + \frac{\zeta_1 L}{2} (d^2(z_t,z_{t+1}) + d^2(z_{t-1},z_t))  -\frac{\sigma_1}{2\eta} d^2(z_t,z_{t+1}).
  \end{align}
  Since $\eta \le \frac{\zeta_1}{2\sigma_1L}$, we have $- \frac{\sigma_1}{2\eta} + \frac{\zeta_1 L}{2} \le - \frac{\zeta_1 L}{2}$, which gives us 
    \begin{align}\label{eq:sumreg2}
         \nonumber0 &\le  \langle \expinv{z_{t+1}}{z^*} , - \F(z_{t+1}) \rangle \\
       \nonumber & \le \langle \expinv{z_{t+1}}{z^*} , - \F(z_{t+1}) + \Gamma_{z_{t}}^{z_{t+1}} \F(z_t) \rangle - \langle \expinv{z_{t}}{z^*} , - \F(z_{t}) +   \nonumber\Gamma_{z_{t-1}}^{z_{t}} \F(z_{t-1}) \rangle \\
      &\quad + \frac{1}{2\eta} (d^2(z_t,z^*) - d^2(z_{t+1},z^*)) + \frac{\zeta_1 L}{2} ( - d^2(z_t,z_{t+1}) + d^2(z_{t-1},z_t)).
  \end{align}
 By summing \eqref{eq:sumreg} from $t=0$ to $K_0$, we observe that
  \begin{align*}
      0  & \le  \sum_{t=0}^{K_0} \langle \expinv{z_{t+1}}{z^*} , - \F(z_{t+1}) \rangle \\
         & \le   \langle \expinv{z_{K_0+1}}{z^*} , - \F(z_{K_0+1}) + \Gamma_{z_{K_0}}^{z_{K_0+1}} \F(z_{K_0}) \rangle - \langle \expinv{z_{0}}{z^*} , - \F(z_{-1}) +   \nonumber\Gamma_{z_{-1}}^{z_{0}} \F(z_{-1}) \rangle \\
         & \quad  +\frac{1}{2\eta} d^2(z_0,z^*) - \frac{1}{2\eta} d^2(z_{K_0+1},z^*) - \frac{\zeta_1L}{2} d^2(z_{K_0},z_{K_0+1}) + \frac{\zeta_1L}{2} d^2(z_0,z_{-1}).
  \end{align*}
    Furthermore, based on the fact that $z_0 = z_{-1}$, we can express that
      \begin{align*}
      0  & \le  \sum_{t=0}^{K_0} \langle \expinv{z_{t+1}}{z^*} , - \F(z_{t+1}) \rangle \\
         & \le   \langle \expinv{z_{K_0+1}}{z^*} , - \F(z_{K_0+1}) + \Gamma_{z_{K_0}}^{z_{K_0+1}} \F(z_{K_0}) \rangle \\
         & + \frac{1}{2\eta} d^2(z_0,z^*) - \frac{1}{2\eta} d^2(z_{K_0+1},z^*) - \frac{\zeta_1L}{2} d^2(z_{K_0},z_{K_0+1}).
  \end{align*}
  Using g-$L$-smoothness again, we have
    \begin{align*}
      0  & \le  \sum_{t=0}^{K_0} \langle \expinv{z_{t+1}}{z^*} , - \F(z_{t+1}) \rangle \\
         & \le  Ld(z_{K_0+1},z^*)d(z_{K_0+1},z_{K_0}) + \frac{1}{2\eta} d^2(z_0,z^*) - \frac{1}{2\eta} d^2(z_{K_0+1},z^*) - \frac{\zeta_1L}{2} d^2(z_{K_0},z_{K_0+1}) \\
         & \le \frac{L}{2}(d^2(z_{K_0+1},z^*) + d^2(z_{K_0+1},z_{K_0})) + \frac{1}{2\eta} d^2(z_0,z^*) \\
         & \quad - \frac{1}{2\eta} d^2(z_{K_0+1},z^*) - \frac{\zeta_1L}{2} d^2(z_{K_0},z_{K_0+1})  \\
         &\le \frac{L}{2}d^2(z_{K_0+1},z^*) + \frac{1}{2\eta} d^2(z_0,z^*) - \frac{1}{2\eta} d^2(z_{K_0+1},z^*).
  \end{align*}
  The last inequality is due to $\zeta_1 \ge 1$, and the inequality gives us 
  \begin{align*}
      d^2(z_{K_0+1},z^*) \le \frac{1}{1-\eta L} d^2(z_0,z^*) \le \frac{1}{1-\frac{\sigma_1}{2\zeta_1}} d^2(z_0,z^*) \le 2 d^2(z_0,z^*) = 2D_1^2.
  \end{align*}
  Thus, $d^2(z_t,z^*) \le 2D_1^2$ holds for $t = K_0+1$, By mathematical induction, the claim $d^2(z_t,z^*) \le 2D_1$  holds for all $t \geq 0$, which completes our proof. \hfill$\blacksquare$
  
  Now we shift our focus on proof of Theorem \ref{thm: avg-c}.
\paragraph{Proof of Theorem \ref{thm: avg-c}}
    By the g-convexity-concavity, for any $(x,y)\in \M\times\N $, there holds
    \begin{align*}
    \begin{cases}
         \sum_{t=1}^T \f(x_{t},y) - \sum_{t=1}^T \f(x_t,y_t)  \le  \sum_{t=1}^T \langle \nabla_y \f(x_t,y_t), \expinv{y_t}{y} \rangle \\
         \sum_{t=1}^T \f(x_{t},y_{t})- \sum_{t=1}^T \f(x,y_{t})  \le  \sum_{t=1}^T \langle -\nabla_x \f(x_t,y_t), \expinv{x_t}{x} \rangle. 
    \end{cases}   
    \end{align*}
    In this way, there holds
    \begin{align*}
        \sum_{t=1}^T \f(x_{t},y) - \sum_{t=1}^T \f(x,y_{t}) &\le \sum_{t=1}^T \langle \nabla_y \f(x_t,y_t), \expinv{y_t}{y} \rangle + \sum_{t=1}^T \langle -\nabla_x \f(x_t,y_t), \expinv{x_t}{x} \rangle\\
        & =  \sum_{t=1}^T  \langle -\F(z_t), \expinv{z_t}{z} \rangle.
    \end{align*}
     From the proof of Lemma \ref{lemma:bdd}, we can obtain
     \begin{align*}
         \sum_{t=1}^T \f(x_{t},y) - \sum_{t=1}^T \f(x,y_{t}) & \le \sum_{t=1}^T  \langle -\F(z_t), \expinv{z_t}{z} \rangle\\
         & \le \langle \expinv{z_{T}}{z} , - \F(z_{T}) + \Gamma_{z_{T-1}}^{z_{T}} \F(z_{T-1}) \rangle + \frac{1}{2\eta} d^2(z_0,z^*) \\
         & \quad - \frac{1}{2\eta} d^2(z_{K_0+1},z^*) - \frac{\zeta_1L}{2} d^2(z_{K_0},z_{K_0+1})\\
         & \le 2D_1G+\frac{1}{\eta}D_1^2.
     \end{align*}
    To complete the proof, it remains to show that
    \begin{align*}
        \f(\bar x_T,y) - \f(x,\bar y_T) \le \frac{1}{T}\sum_{t=1}^T \f(x_{t},y) -  \frac{1}{T}\sum_{t=1}^T \f(x,y_{t}),
    \end{align*}
    which can be proved by induction 
    \begin{align*}
        \nonumber \f(\bar x_T,y) &= \f( \exp_{\bar x_t}( \frac{1}{T} \exp^{-1}_{\bar x_{T-1}} x_{T} ),y)\\
        \nonumber&\le \frac{1}{T} \f(x_T,y) + \frac{T-1}{T} \f(\bar x_{T-1},y)\\
        \nonumber&\le \frac{1}{T} \f(x_T,y) + \frac{T-1}{T} \frac{1}{T-1} \f(x_{T-1},y) + \frac{T-1}{T} \frac{T-2}{T-1} \f(\bar x_{T-2},y)\\
        \nonumber&\le \frac{1}{T} \f(x_T,y) + \frac{1}{T} \f(x_{T-1},y) + \cdots + \frac{1}{T} \f(x_2,y) + \frac{1}{T} \f(\bar x_1,y)\\
        & = \frac{1}{T} \sum_{t=1}^T \f(x_t,y),
    \end{align*}
    and
    \begin{align*}
        \nonumber \f(x, \bar y_T) &= \f( x,\exp_{\bar y_t}( \frac{1}{T} \exp^{-1}_{\bar y_{T-1}} y_{T} ))\\
        \nonumber&\ge \frac{1}{T} \f(x,y_T) + \frac{T-1}{T} \f( x,\bar y_{T-1})\\
        \nonumber&\ge \frac{1}{T} \f(x,y_T) + \frac{1}{T} \f(x,y_{T-1}) + \cdots + \frac{1}{T} \f(x,y_2) + \frac{1}{T} \f(x, \bar y_1)\\
        & = \frac{1}{T} \sum_{t=1}^T \f(x,y_t).
    \end{align*}
    Then Theorem \ref{thm: avg-c} has been established. $\hfill \blacksquare$
\section{Proof of Lemma \ref{lemma:key}}\label{app:key}
We first propose some lemmas that are useful in proving Lemma \ref{lemma:key}.
\begin{lemma}[A variant of Gauss-Bonnet theorem, \citealp{lee2018introduction,chern1999lectures}]\label{lemma:rotate}
    Suppose $M$ is a manifold with sectional curvature in $[\kappa,K]$ and $\Xi (s,t):[0,1]\times[0.1] \to \M$ is a rectangle map. $\Gamma_\gamma$ is the parallel transport around the boundary curve $\gamma$ that $\gamma = \Xi(t,0) \cup \Xi(1,s) \cup \Xi (t,1) \cup \Xi(0,s) $. Denote vector fields $S(\Xi(s,t)) = \Xi_{*}\frac{\partial}{\partial s}(s,t)$, $T(\Xi(s,t)) = \Xi_{*}\frac{\partial}{\partial t}(s,t)$ and $K_m = \max(|\kappa|,|K|)$. Then we have
    \begin{align*}
        \| \Gamma_\gamma z - z \| \le 12 K_m\|z\| \int_0^1 \int_0^1 \|T\| \|S\| ds dt, \forall z\in T_{\Xi(0,0)}\M.
    \end{align*}
\end{lemma}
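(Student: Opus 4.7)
The plan is to compare two natural parallel transports from $\Xi(0,0)$ to $\Xi(1,1)$ and interpret the holonomy $\Gamma_\gamma z - z$ as the deficit between them. Define $X(s,t) \in T_{\Xi(s,t)}\M$ by parallel transporting $z$ first along $s' \mapsto \Xi(s',0)$ to $\Xi(s,0)$ and then along $t' \mapsto \Xi(s,t')$ to $\Xi(s,t)$, and define $Y(s,t)$ analogously with the order swapped. By construction $\nabla_T X = 0$ and $\nabla_S Y = 0$ on the rectangle, and $X$ and $Y$ agree on both $\{t=0\}$ and $\{s=0\}$. Denoting by $\Psi_2$ parallel transport along $(0,0)\to(0,1)\to(1,1)$, a direct check gives $\Gamma_\gamma z - z = \Psi_2^{-1}\big(X(1,1)-Y(1,1)\big)$, so that the isometry property of parallel transport implies
\begin{align*}
\|\Gamma_\gamma z - z\| = \|X(1,1) - Y(1,1)\|.
\end{align*}

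Next I would set $D(s,t) = X(s,t) - Y(s,t)$, so that $\nabla_T D = -\nabla_T Y$ and $D(s,0) = 0$. Since the coordinate vector fields commute, $[S,T]=0$, and because $\nabla_S Y = 0$, the curvature identity yields $\nabla_S \nabla_T Y = R(S,T) Y$. Using the boundary condition $\nabla_T Y|_{s=0} = 0$ (which holds because $Y(0,\cdot)$ is parallel in $t$), one integrates in $s$ at fixed $t$, with parallel transport along the $s$-curve, to obtain
\begin{align*}
\nabla_T Y|_{(s,t)} = \int_0^s R(S,T) Y \, ds',
\end{align*}
where the integrand is parallel-transported to $T_{\Xi(s,t)}\M$. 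Integrating $\nabla_T D = -\nabla_T Y$ from $t=0$ in the same way then expresses $D(1,1)$ as a double integral of parallel-transported copies of $R(S,T)Y$ over $[0,1]^2$, so the proof reduces to a pointwise bound on $\|R(S,T)Y\|$.

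Since parallel transport is an isometry, $\|Y\| \equiv \|z\|$, so it suffices to show $\|R(S,T) Y\| \le 12 K_m \|S\| \|T\| \|z\|$. To extract the constant I would invoke Lemma \ref{lemma:sec}(ii), which expresses $6\, R(X,Y,W,Z)$ as a signed sum of eighteen terms $\mathcal K(\cdot,\cdot)$; applying $|\mathcal K(U,V)| \le K_m \|U\|^2 \|V\|^2$, expanding via the triangle inequality, and taking the supremum over unit $W$ yields the claim with constant $12$. The principal obstacle is the careful bookkeeping of parallel transports, since the pointwise values of $R(S,T)Y$ live in different tangent fibers and must be transported to a common space for the scalar integration to be meaningful; the isometry property of parallel transport is what keeps the norm bounds clean. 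A secondary technical point is pinning down the numerical constant $12$ from Lemma \ref{lemma:sec}(ii) rather than settling for a looser universal bound.
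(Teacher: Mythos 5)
Your proposal is correct and takes essentially the same route as the paper: your field $Y$ is precisely the paper's $Z$ (parallel in $s$, seeded along the left edge), the holonomy deficit is turned into a double integral of curvature over the rectangle using $[S,T]=0$ together with $\nabla_S Y=0$, and the constant $12$ is extracted from the same polarization identity (Lemma \ref{lemma:sec}(ii)) with $|\mathcal K(U,V)|\le K_m\|U\|^2\|V\|^2$. The only difference is presentational: the paper argues in dual form, pairing against an auxiliary field $W$ seeded from an arbitrary $w$ and chosen parallel so that the scalar fundamental theorem of calculus can be applied twice to $\langle Z,W\rangle$, whereas you integrate the covariant derivative of the difference field $D=X-Y$ directly along the $s$- and $t$-curves and take norms---two renderings of the identical computation yielding the same bound.
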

\begin{proof}
    We first extend $z$ to a vector field $Z(s_0,t_0) = Z(\Xi(s_0,t_0))$ by first parallel transporting $z$ along the curve $\Xi(0,t), (0\le t \le t_0)$ and then  parallel transporting  along the curve $\Xi(s,t_0), (0\le s \le s_0)$. It shows that
    \begin{align*}
        \begin{cases}
            \nabla_S Z(s,t) = 0\\
            \nabla_T Z(0,t) = 0
        \end{cases}
        \forall (s,t) \in [0,1]\times[0,1].
    \end{align*}
    
    For an arbitrary vector $w\in T_{\Xi(0,0)}\M$, we also extend it to $W(s_0,t_0)$ by first parallel transporting along the curve $\Xi(s,0), (0\le s\le 1)$, then along the curve $\Xi(1,t), (0\le t\le t_0)$, and along the curve $\Xi(s,t_0), (1\ge s\ge s_0)$. We can also have
    \begin{align*}
        \begin{cases}
            \nabla_S W(s,t) = 0\\
            \nabla_T W(1,t) = 0
        \end{cases}
        \forall (s,t) \in [0,1]\times[0,1].
    \end{align*}
    We denote two curves that $\xi_1 = \Xi(s,0),(0\le s\le 1)$ and $\xi_2 = \Xi(s,0) \cup \Xi (1,t), (0\le s,t\le 1)$. By the above notation, we find
    \begin{align*}
        \langle \Gamma_\gamma z - z , w \rangle &= \langle \Gamma_\gamma z , w \rangle - \langle z,w\rangle \\
        & = \langle \Gamma_{\xi_2} \Gamma_\gamma z , \Gamma_{\xi_2} w \rangle - \langle \Gamma_{\xi_1} z ,\Gamma_{\xi_2} w \rangle.
    \end{align*}
    From the way we extend $Z$ and $W$, we know that $\Gamma_{\xi_2} \Gamma_\gamma z = Z(1,1)$, $\Gamma_{\xi_2} w = W(1,1)$, $ \Gamma_{\xi_1} z = Z(1,0)$, and $\Gamma_{\xi_2} w = W(1,0)$, thus we have
    \begin{align*}
        \langle \Gamma_\gamma z - z , w \rangle &= \langle Z(1,1),W(1,1)\rangle - \langle Z(1,0) - W(1,0)\rangle\\
        & = \int_0^1 \frac{\partial}{\partial t} \langle Z(1,t),W(1,t)\rangle dt \\
        & = \int_0^1 T \langle Z(1,t),W(1,t)\rangle dt \\
        & = \int_0^1 \langle \nabla_T Z(1,t),W(1,t)\rangle + \langle Z(1,t), \nabla_T  W(1,t)\rangle dt.
    \end{align*}
    Due to the fact that $\nabla_T W(1,t) = 0$, we have
    \begin{align*}
        \langle \Gamma_\gamma z - z , w \rangle &= \int_0^1 \langle \nabla_T Z(1,t),W(1,t)\rangle dt\\
        & = \int_0^1 \Big( \langle \nabla_T Z(0,t),W(0,t) \rangle + \int_0^1  \partial_s \langle \nabla_T Z(s,t),W(s,t) \rangle ds \big)dt\\
        & = \int_0^1 \int_0^1 \langle \frac{\partial}{\partial s} \langle \nabla_T Z(s,t),W(s,t) \rangle ds dt\\
        & = \int_0^1 \int_0^1   S \langle \nabla_T Z(s,t),W(s,t) \rangle ds dt\\
        & = \int_0^1 \int_0^1    \langle \nabla_S \nabla_T Z(s,t),W(s,t) \rangle + \langle  \nabla_T Z(s,t),\nabla_S W(s,t) \rangle ds dt\\
        & = \int_0^1 \int_0^1    \langle \nabla_S \nabla_T Z(s,t),W(s,t) \rangle  ds dt.
    \end{align*}
    The last equality is from the fact that $\nabla_S W(s,t) = 0$.
    Since the curvature has the form
    \begin{align*}
        R(S,T,Z,W) = \langle \nabla_S \nabla_T Z(s,t),W(s,t) \rangle + \langle \nabla_T \nabla_S  Z(s,t),W(s,t) \rangle + \langle \nabla_{[S,T]}  Z(s,t),W(s,t) \rangle
    \end{align*}
    and we have $\nabla_S Z(s,t)  = 0$, $[S,T] = 0$, it holds that 
    \begin{align*}
        \langle \Gamma_\gamma z - z , w \rangle & = \int_0^1 \int_0^1    R(S,T,Z,W) ds dt\\
        & =  \int_0^1 \int_0^1    R(\frac{S}{\|S\|},\frac{T}{\|T\|},\frac{Z}{\|Z\|},\frac{W}{\|W\|})\|S\|\|T\|\|W\|\|Z\| ds dt.
    \end{align*}
    By Lemma \ref{lemma:sec} and $R(X,Y,X,Y) \le K_m\|X\|^2\|Y\|^2$, we have
    \begin{align*}
         R(\frac{S}{\|S\|},\frac{T}{\|T\|},\frac{Z}{\|Z\|},\frac{W}{\|W\|}) \le 12 K_m.
    \end{align*}
    Hence,
    \begin{align*}
        \langle \Gamma_\gamma z - z , w \rangle &\le 12 K_m \int_0^1 \int_0^1 \|S\|\|T\|\|W\|\|Z\| ds dt\\
        & =  12 K_m \|z\|\|w\|\int_0^1 \int_0^1 \|S\|\|T\| ds dt,
    \end{align*}
    which completes our proof since $w$ is arbitary.
\end{proof}    
    \begin{lemma}\label{lemma:rauch}
        Suppose $M$ is a manifold with sectional curvature in $[\kappa,K]$ and $\gamma:[0,b] \to \M$ is a geodesic with $\| \dot \gamma(0) \| =1$ ($b\le \frac{\pi}{\sqrt{K}}$ if $K>0$). If $J$ is a Jacobi field along $\gamma$ with $\|J(\gamma(0))\|=\alpha_1$ and $\|J(\gamma(b))\|=\alpha_2$, then we have
        \begin{align*}
            \|J(\gamma(t))\| \le \frac{ \mathbf s(\kappa,b)}{ \mathbf S(K,b)} (\alpha_1 + \alpha_2).
        \end{align*}

    \end{lemma}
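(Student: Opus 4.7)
\textbf{Proof proposal for Lemma \ref{lemma:rauch}.} The plan is to decompose the Jacobi field into two ``boundary-vanishing'' pieces and then apply the Jacobi comparison theorem (Lemma \ref{lemma:jacobi}) to each piece separately. Concretely, since the Jacobi equation along $\gamma$ is a second-order linear ODE and $b$ lies strictly below the first conjugate distance $\pi/\sqrt{K}$ (the case $b=\pi/\sqrt{K}$ makes the right-hand side $\infty$ and is trivial), the two-point boundary value problem is well-posed. I write $J = J_1 + J_2$, where $J_1, J_2$ are Jacobi fields along $\gamma$ with
\[
J_1(\gamma(0)) = J(\gamma(0)), \quad J_1(\gamma(b)) = 0, \qquad J_2(\gamma(0)) = 0, \quad J_2(\gamma(b)) = J(\gamma(b)).
\]

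The second piece $J_2$ vanishes at $\gamma(0)$, so Lemma \ref{lemma:jacobi} applies directly: the lower bound at $t=b$ gives $\alpha_2 = \|J_2(\gamma(b))\| \ge \mathbf{S}(K,b)\,\|\nabla_{\dot\gamma} J_2(\gamma(0))\|$, hence $\|\nabla_{\dot\gamma} J_2(\gamma(0))\| \le \alpha_2/\mathbf{S}(K,b)$. Plugging this into the upper bound yields
\[
\|J_2(\gamma(t))\| \le \mathbf{s}(\kappa,t)\,\|\nabla_{\dot\gamma} J_2(\gamma(0))\| \le \frac{\mathbf{s}(\kappa,t)}{\mathbf{S}(K,b)}\,\alpha_2.
\]
For $J_1$, I run the same argument along the reversed geodesic $\tilde\gamma(s) = \gamma(b-s)$; the field $\tilde{J}_1(s) := J_1(\gamma(b-s))$ is a Jacobi field along $\tilde\gamma$ vanishing at $s=0$ with $\|\tilde{J}_1(b)\| = \alpha_1$, so the same estimate gives $\|J_1(\gamma(t))\| \le \frac{\mathbf{s}(\kappa, b-t)}{\mathbf{S}(K,b)}\,\alpha_1$.

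Adding the two estimates via the triangle inequality $\|J(\gamma(t))\| \le \|J_1(\gamma(t))\| + \|J_2(\gamma(t))\|$, and using the monotonicity of $\mathbf{s}(\kappa,\cdot)$ on $[0,b]$ (immediate from the definitions: $\sinh$ is increasing for $\kappa<0$ and the identity otherwise), I get $\mathbf{s}(\kappa,t), \mathbf{s}(\kappa,b-t) \le \mathbf{s}(\kappa,b)$, so
\[
\|J(\gamma(t))\| \le \frac{\mathbf{s}(\kappa,b-t)\,\alpha_1 + \mathbf{s}(\kappa,t)\,\alpha_2}{\mathbf{S}(K,b)} \le \frac{\mathbf{s}(\kappa,b)}{\mathbf{S}(K,b)}(\alpha_1+\alpha_2),
\]
which is the claim.

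The only delicate point is the well-posedness of the boundary value decomposition $J = J_1 + J_2$: this requires that $\gamma(0)$ and $\gamma(b)$ are not conjugate along $\gamma$. Under the hypothesis $b < \pi/\sqrt{K}$, the lower bound part of Lemma \ref{lemma:jacobi} guarantees $\mathbf{S}(K,b) > 0$, which in turn implies the absence of conjugate points on $(0,b]$ (any Jacobi field vanishing at both endpoints would have initial derivative of norm zero, hence be identically zero). So the map from $\nabla_{\dot\gamma} J(\gamma(0))$ to $J(\gamma(b))$, among Jacobi fields vanishing at $\gamma(0)$, is a linear isomorphism, which justifies the decomposition. I expect this conjugate-point check to be the only nontrivial bookkeeping; the rest of the argument is a clean application of the one-sided Jacobi comparison in both directions along $\gamma$.
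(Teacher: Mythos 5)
Your proof is correct and follows essentially the same route as the paper's: the identical decomposition $J = J_1 + J_2$ into Jacobi fields each vanishing at one endpoint, followed by the two-sided application of the Jacobi comparison theorem (Lemma \ref{lemma:jacobi}) to bound the initial covariant derivatives via $\mathbf S(K,b)$ and the interior norms via $\mathbf s(\kappa,\cdot)$. Your explicit verification that the two-point boundary value problem is well-posed (no conjugate points when $\mathbf S(K,b)>0$) is a point the paper silently omits, and is a welcome addition rather than a deviation.
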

    \begin{proof}
        We split $J(\gamma(t))=J_1(\gamma(t))+J_2(\gamma(t))$, where $J_1$ is a Jacobi field such that $J_1(\gamma(0)) = 0$ and $J_1(\gamma(b)) = J(\gamma(b))$, and where $J_2$ is a Jacobi field such that $J_2(\gamma(b)) = 0$ and $J_2(\gamma(0)) = J(\gamma(0))$.
        Applying the Jacobi comparison theorem (Lemma \ref{lemma:jacobi}), we have
        \begin{align} \label{eq:S1}
            \begin{cases}
               \mathbf S(K,b) \| \nabla_{\dot\gamma} J_1(0) \| \le \| J_1(\gamma(b)) \| = \alpha_2 \\
               \mathbf S(K,b) \| \nabla_{\dot\gamma} J_2(b) \| \le \| J_2(\gamma(0)) \| = \alpha_1 ,
            \end{cases}
        \end{align}
        and
        \begin{align} \label{eq:s1}
            \begin{cases}
             \| J_1(\gamma(t)) \|  \le \mathbf s(\kappa,t) \| \nabla_{\dot\gamma} J_1(0) \| \le   \mathbf s(\kappa,b) \| \nabla_{\dot\gamma} J_1(0) \| \\
             \| J_2(\gamma(t)) \|  \le \mathbf s(\kappa,(b-t)) \| \nabla_{\dot\gamma} J_2(b) \| \le   \mathbf s(\kappa,b) \| \nabla_{\dot\gamma} J_2(b)\|.
            \end{cases}
        \end{align}
        Combining \eqref{eq:S1} and \eqref{eq:s1} we have
        \begin{align*}
            \begin{cases}
             \| J_1(\gamma(t)) \| \le  \frac{ \mathbf s(\kappa,b)}{ \mathbf S(K,b)} \alpha_1  \\
             \| J_2(\gamma(t)) \| \le  \frac{ \mathbf s(\kappa,b)}{ \mathbf S(K,b)} \alpha_2,
            \end{cases}
        \end{align*}
        which gives us
        \begin{align*}
            \|J(\gamma(t))\| \le  \| J_1(\gamma(t)) \| +  \| J_2(\gamma(t)) \| \le \frac{ \mathbf s(\kappa,b)}{ \mathbf S(K,b)} (\alpha_1 + \alpha_2).
        \end{align*}
        This completes the proof.
    \end{proof}

    \begin{lemma}\label{lemma:sinh}
    Denote $K_m = \max(|\kappa|,|K|)$. If $0 \le b \le \frac{1}{\sqrt {K_m}}$, then we have
        $\frac{ \mathbf s(\kappa,b)}{ \mathbf S(K,b)} \le 3. $
    \end{lemma}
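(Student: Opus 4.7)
The plan is to establish the inequality by a straightforward case analysis on the signs of $\kappa$ and $K$, using that $\sinh$ is convex and $\sin$ is concave on the interval in question. Recall that $K_m = \max(|\kappa|,|K|)$, and the hypothesis $b \le 1/\sqrt{K_m}$ ensures that both $\sqrt{-\kappa}\,b$ (when $\kappa<0$) and $\sqrt{K}\,b$ (when $K>0$) lie in $[0,1]$.

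First I would dispense with the three trivial cases. If $\kappa \ge 0$ and $K \le 0$, then $\mathbf{s}(\kappa,b) = \mathbf{S}(K,b) = b$ and the ratio equals $1$. If $\kappa \ge 0$ and $K > 0$, the ratio is $\sqrt{K}\,b / \sin(\sqrt{K}\,b)$, which on $[0,1]$ is bounded by $1/\sin(1) < 3$. Symmetrically, if $\kappa < 0$ and $K \le 0$, the ratio is $\sinh(\sqrt{-\kappa}\,b)/(\sqrt{-\kappa}\,b) \le \sinh(1) < 3$.

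The only non-trivial case is $\kappa < 0$ and $K > 0$. Setting $a = \sqrt{-\kappa}\,b \in [0,1]$ and $c = \sqrt{K}\,b \in [0,1]$, the ratio becomes
\begin{equation*}
\frac{\mathbf{s}(\kappa,b)}{\mathbf{S}(K,b)} = \frac{\sinh(a)/a}{\sin(c)/c}.
\end{equation*}
Since $x \mapsto \sinh(x)/x$ is increasing on $[0,1]$ (its derivative is $(x\cosh x - \sinh x)/x^2 \ge 0$ by the Taylor expansion of $\sinh$), we have $\sinh(a)/a \le \sinh(1)$. Similarly, since $x \mapsto \sin(x)/x$ is decreasing on $[0,\pi/2]$, we have $\sin(c)/c \ge \sin(1)$. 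Therefore the ratio is at most $\sinh(1)/\sin(1) \approx 1.397 < 3$.

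The estimate $3$ is not tight, so there is no real obstacle; the only care needed is to verify monotonicity of $\sinh(x)/x$ and $\sin(x)/x$ on $[0,1]$, which follows from elementary Taylor expansions. The looseness of the constant $3$ presumably leaves room for simpler constants downstream in Lemma \ref{lemma:key}.
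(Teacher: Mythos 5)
Your proof is correct, and it takes a genuinely different route from the paper's. After the same reduction to the only non-trivial case $\kappa<0<K$, the paper proceeds by proving the pointwise polynomial bound $\frac{\sinh(ax)}{\sin(cx)} \le \frac{a}{c} + \frac{a(a^2+c^2)x^2}{c}$ for $0\le x\le \frac{1}{a}$, via two rounds of calculus (first $\cosh(ax)\le 1+a^2x^2$, then a sign analysis of the derivative of $g(x)=\sinh(ax)-\bigl(\frac{a}{c}+\frac{a(a^2+c^2)x^2}{c}\bigr)\sin(cx)$); substituting $a=\sqrt{-\kappa}$, $c=\sqrt{K}$, $x=b$ yields the curvature-quantitative estimate $\frac{\mathbf s(\kappa,b)}{\mathbf S(K,b)}\le 1+(|\kappa|+K)b^2\le 1+2K_m b^2\le 3$ (the paper's displayed ``$1+(\kappa+K)b^2$'' is a sign typo, as $\kappa<0$ there), with the degenerate cases $\kappa=0$ or $K=0$ handled by continuity. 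You instead normalize the ratio to $\frac{\sinh(a)/a}{\sin(c)/c}$ with $a=\sqrt{-\kappa}\,b$, $c=\sqrt{K}\,b\in[0,1]$, and invoke the elementary monotonicity of $\sinh(x)/x$ (increasing) and $\sin(x)/x$ (decreasing on $[0,\pi/2]$), evaluated at the endpoint $1$; your monotonicity verifications are sound, and your explicit treatment of the three easy sign cases replaces the paper's continuity argument. Your route is shorter and yields the sharper uniform constant $\sinh(1)/\sin(1)\approx 1.40$; what it gives up is the $b$-dependent refinement $1+(|\kappa|+K)b^2$, which tends to $1$ as $b\to 0$ and could in principle be exploited for small step sizes. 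Since the paper only ever uses the constant $3$ (in bounding $\|S(s,t)\|$ in the proof of Lemma \ref{lemma:key}), nothing downstream is lost --- indeed your constant would slightly improve the numerical factors there.
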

    \begin{proof}
        It suffice to proof the lemma in the case $K\ge 0 $ and $\kappa \le 0$. We first consider the case where $K > 0 $ and $\kappa < 0$, where 
        \begin{align}\label{eq:res0}
            \frac{ \mathbf s(\kappa,b)}{ \mathbf S(K,b)} = \frac{ \sqrt{K} \sinh(\sqrt{-k}b) }{\sqrt{-\kappa} \sin(\sqrt{K}b)}.
        \end{align}
        
        We show that $\cosh(ax) \le (1+a^2x^2)$ for $ 0 \le x\le \frac{1}{a}$. Let
        \begin{align*}
            f(x) = \cosh(ax) - 1- a^2x^2.
        \end{align*}
        We have 
        \begin{align*}
            \begin{cases}
                f(0) = 0\\
                f^\prime(x) = a\sinh(ax) - 2a^2x, f^\prime(0) = 0\\
                f^{\prime\prime}(x) = a^2(\cosh(ax) - 2).
            \end{cases}
        \end{align*}
        Since $\cosh(ax) - 2 \le 0$ for $0\le x\le \frac{1}{a}$, we have $f(x)\le 0$ for $0\le x\le \frac{1}{a}$.
        
        Then we show that $ \frac{\sinh(a x)}{\sin(c x)} \le \frac{a}{c} + \frac{a(a^2+c^2)x^2}{c} $ for $0 \le x \le \frac{1}{a}$. By defining 
        \begin{align*}
            g(x) = \sinh(ax) - ( \frac{a}{c} + \frac{a(a^2+c^2)x^2}{c} )\sin(c x),
        \end{align*}we have
         \begin{align*}
            \begin{cases}
                g(0) = 0\\
                g^\prime(x) = a\cosh(ax) -(a+ax^2(a^2+c^2)) \cos cx - (\frac{2a(a^2+c^2)x}{c})) \sin cx \\
            \end{cases}
        \end{align*}
        
    As $0\le x\le \frac{1}{a}$, we have
    \begin{align*}
        g^\prime(x) &\le a(1+a^2x^2) -(a+ax^2(a^2+c^2)) \cos cx - (\frac{2a(a^2+c^2)x}{c})) \sin cx\\
                    &\le a(1+a^2x^2)(1-\cos cx) - 2 a c x \sin cx\\
                    &\le  a(1+a^2x^2)(1-\cos^2 cx) - 2 a \sin^2 cx\\
                    & = a(1+a^2x^2) \sin^2 cx - 2a\sin^2 cx\\
                    & \le a(-1+a^2x^2)\sin^2 cx\\
                    & \le 0
    \end{align*}
    And thus, we have
    \begin{align*}
        \sinh(ax) - ( \frac{a}{c} + \frac{a(a^2+c^2)x^2}{c} )\sin(c x) \le 0,\quad 0\le x\le  \frac{1}{a}
    \end{align*}
    which is equivalent to 
    \begin{align}\label{eq:res1}
        \frac{\sinh(a x)}{\sin(c x)} \le \frac{a}{c} + \frac{a(a^2+c^2)x^2}{c},\quad 0\le x\le  \frac{1}{a}
    \end{align}
    Putting \eqref{eq:res0} into \eqref{eq:res1} with $a = \sqrt{-\kappa}$ and $c = \sqrt{K}$, we have, for $0 \le b \le \frac{1}{\sqrt {K_m}}$
    \begin{align*}
        \frac{ \mathbf s(\kappa,b)}{ \mathbf S(K,b)} &= \frac{ \sqrt{K} \sinh(\sqrt{-k}b) }{\sqrt{-\kappa} \sin(\sqrt{K}b)}\\
        & \le 1+(\kappa+K)b^2.\\
        &\le 1+2K_m\frac{1}{K_m}\\
        &\le 3,
    \end{align*}
    which completes the proof for the where case $K>0$ and $\kappa<0$. Since $ \frac{ \mathbf s(\kappa,b)}{ \mathbf S(K,b)}$ is continuous on $K$ and $\kappa$, we have proved the case with $K=0$ or $\kappa = 0$ by continuity. 
    \end{proof}
    
    \begin{lemma}\label{lemma:norm}
        In Algorithm \ref{alg:R-OGDA}, if $\eta \le \frac{1}{8L}$, it satisfies
        \begin{align}
            \frac{1}{2} \le \frac{\|\F(z_{t+1})\|}{\|\F(z_t)\|} \le \frac{3}{2}.
        \end{align}
    \end{lemma}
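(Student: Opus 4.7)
}
The plan is to control $\|\F(z_{t+1})\|$ in terms of $\|\F(z_t)\|$ by combining g-$L$-smoothness with the one-step displacement bound from the R-OGDA update, and then to close the estimate by a short induction argument. Since parallel transport is an isometry, the g-$L$-smoothness assumption yields $\big|\|\F(z_{t+1})\|-\|\F(z_t)\|\big|\le \|\F(z_{t+1})-\Gamma_{z_t}^{z_{t+1}}\F(z_t)\|\le L\,d(z_t,z_{t+1})$, so the whole question reduces to bounding the step size $d(z_t,z_{t+1})$.

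First I would use the update rule in Algorithm \ref{alg:R-OGDA}. Since $\exp_{z_t}$ is a radial isometry, the coordinate-wise updates give
\begin{align*}
d(z_t,z_{t+1})\;\le\;\bigl\|-2\eta\F(z_t)+\eta\Gamma_{z_{t-1}}^{z_t}\F(z_{t-1})\bigr\|\;\le\;2\eta\,\|\F(z_t)\|+\eta\,\|\F(z_{t-1})\|.
\end{align*}
Combining this with the smoothness estimate above produces the two-term recursion
\begin{align*}
(1-2\eta L)\,\|\F(z_t)\|-\eta L\,\|\F(z_{t-1})\|\;\le\;\|\F(z_{t+1})\|\;\le\;(1+2\eta L)\,\|\F(z_t)\|+\eta L\,\|\F(z_{t-1})\|.
\end{align*}

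I would then finish by induction on $t$ with the inductive hypothesis $\tfrac12\le \|\F(z_s)\|/\|\F(z_{s-1})\|\le \tfrac32$ for all $s\le t$. The base case is handled by the initialization $z_{-1}=z_0$ (and the analogous equality at the first true update of the algorithm), which makes the corresponding ratio equal to one. For the inductive step, the hypothesis gives $\|\F(z_{t-1})\|\le 2\|\F(z_t)\|$, and plugging this into both sides of the recursion yields $\|\F(z_{t+1})\|\le (1+4\eta L)\,\|\F(z_t)\|$ and $\|\F(z_{t+1})\|\ge (1-4\eta L)\,\|\F(z_t)\|$. Under the assumption $\eta\le\tfrac{1}{8L}$ we have $4\eta L\le \tfrac12$, and both bounds collapse to $\tfrac12\|\F(z_t)\|\le\|\F(z_{t+1})\|\le\tfrac32\|\F(z_t)\|$, closing the induction.

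The routine calculations are all elementary; the only delicate point is keeping the two-step memory in the recursion under control. That is precisely why I propagate the \emph{lower} bound $\|\F(z_t)\|\ge \tfrac12\|\F(z_{t-1})\|$ along with the upper bound, since without it the $\eta L\,\|\F(z_{t-1})\|$ term on the right-hand side could not be absorbed into a multiple of $\|\F(z_t)\|$, and the induction would not close.
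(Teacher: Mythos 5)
Your proof is correct and is essentially the intended argument: the paper omits its own proof of Lemma \ref{lemma:norm} by deferring to \cite{chavdarova2021last}, whose Euclidean proof is exactly your smoothness-plus-step-bound recursion closed by induction, and your version correctly supplies the two Riemannian ingredients needed to transplant it (isometry of parallel transport, so the reverse triangle inequality gives $\big|\|\F(z_{t+1})\|-\|\F(z_t)\|\big|\le L\,d(z_t,z_{t+1})$, and $d(z_t,\exp_{z_t}(v))\le\|v\|$ for the displacement bound). Propagating the lower bound $\|\F(z_t)\|\ge\tfrac{1}{2}\|\F(z_{t-1})\|$ to absorb the two-step memory term, with the base case from the initialization $z_{-1}=z_0$, is precisely the right way to close the induction.
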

    The proof is as same as that the prior work by \cite{chavdarova2021last}, so we omit the proof.
    
    Then we begin our proof of Lemma \ref{lemma:key}.
    \paragraph{Proof of Lemma \ref{lemma:key}}
        We first estimate the length $d(z_{t-1},z_t)$ by,
        \begin{align*}
            d(z_{t-1},z_t) &= \| 2 \eta \F(z_{t-1}) - \Gamma_{z_{t-2}}^{z_{t-1}}\eta(\F(z_{t-2})) \| \\
            & \le \eta\|\F(z_{t-1})\| + \eta \|  \F(z_{t-1}) - \Gamma_{z_{t-2}}^{z_{t-1}}(\F(z_{t-2})) \| \\
            & \le \eta\|\F(z_{t-1})\| + \eta L d(z_{t-1},z_{t-2}) \\
            & =\eta\|\F(z_{t-1})\| + \eta L \| 2\F(z_{t-2}) - \Gamma_{z_{t-3}}^{z_{t-2}} \F(z_{t-3}) \| \\
            & \le \eta\|\F(z_{t-1})\| + \eta L (\|2\F(z_{t-2}) \| + \| \F(z_{t-3}) \|).
        \end{align*}
    By Lemma \ref{lemma:norm}, we have $\|\F(z_{t-2}) \| \le 2\|\F(z_{t-1})\| $ and $\|\F(z_{t-3})\| \le 4\|\F(z_{t-1})\|  $. So, we have
    \begin{align}
            d(z_{t-1},z_t) \le (1+8L\eta)\eta \| \F(z_{t-1})\|.
    \end{align}
    Similarly, $d(\hat z_{t+1}, z_t)$ and $d(\hat z_{t}, z_{t-1})$ can be bounded by 
    \begin{align*}
        d(\hat z_{t+1}, z_t) &= \| \eta \F(z_{t}) - \Gamma_{z_{t-1}}^{z_t} \F(z_{t-1})  \| \\
        & \le L\eta d(z_t,z_{t-1}) \\
        & \le L\eta^2 \| 2 \eta \F(z_{t-1}) - \Gamma_{z_{t-2}}^{z_{t-1}}\eta(\F(z_{t-2})) \|\\
        & \le L\eta^2 (2 \|\F(z_{t-1})\| +\|\F(z_{t-2})\| )\\
        & \le 4L\eta^2\|\F(z_{t-1})\|,
    \end{align*}
    and
    \begin{align*}
        d(\hat z_{t}, z_{t-1}) &= \eta \|  \F(z_{t-1}) - \Gamma_{z_{t-2}}^{z_{t-1}}(\F(z_{t-2}))  \|\\
        &\le L\eta^2  \|2\F(z_{t-2}) - \Gamma_{z_{t-3}}^{z_{t-2}}(\F(z_{t-3})) \|\\
        &\le 8 L\eta^2\|\F(z_{t-1})\|.
    \end{align*}
        
    Consequently, the $l_t: =  d(z_{t-1},z_t) +  d(\hat z_{t+1}, z_t) + d(\hat z_{t}, z_{t-1})$ has the bound
    \begin{align*}
        l_t &\le (1+8L\eta)\eta \| \F(z_{t-1})\| +  4L\eta^2\|\F(z_{t-1})\| + 8 L\eta^2\|\F(z_{t-1})\|\\
        & \le (1+20L\eta)\eta \|\F(z_{t-1})\|.
    \end{align*}
    Since $\eta \le \frac{1}{20L}$, we have
    \begin{align}\label{eq:dt}
        l_t \le 2\eta \|\F(z_{t-1})\|.
    \end{align}
    
    Now we examine (i). Applying Lemma \ref{lemma:tau} in the geodesic triangle $\triangle z_{t-1}\hat z_{t+1}z_{t}$ yields
    \begin{align}\label{eq:hh1}
        \expinv{z_{t-1}}{\hat z_{t+1}} = H_{z_{t-1},p_1}^{\hat z_{t+1}} \expinv{z_{t-1}}{z_t} + \Gamma_{z_t}^{z_{t-1}} \expinv{z_t}{\hat z_{t+1}},
    \end{align}
    where $p_1$ lies in the geodesic between $z_{t-1}$ and $z_t$. Applying Lemma \ref{lemma:tau} in the geodesic triangle $\triangle z_{t-1}\hat z_{t+1}\hat z_{t}$ yields
    \begin{align}\label{eq:hh2}
        \expinv{z_{t-1}}{\hat z_{t+1}} = H_{z_{t-1},p_2}^{\hat z_{t+1}} \expinv{z_{t-1}}{\hat z_t} + \Gamma_{\hat z_{t}}^{z_{t-1}} \expinv{\hat z_t}{\hat z_{t+1}}.
    \end{align}
     where $p_2$ lies in the geodesic between $z_{t-1}$ and $\hat z_t$.

    Combing \eqref{eq:hh1} and \eqref{eq:hh2}, we have
    \begin{align}\label{eq:hh3}
    \nonumber H_{z_{t-1},p_2}^{\hat z_{t+1}}& (-\eta \F(z_{t-1}) + \eta \Gamma_{z_{t-2}}^{z_{t-1}} \F(z_{t-2})  ) - \Gamma_{\hat z_{t}}^{z_{t-1}} \Gamma_{\hat z_{t+1}}^{\hat z_{t}} G_{t+1}\\
    &= H_{z_{t-1},p_1}^{\hat z_{t+1}} (-2\eta \F(z_{t-1}) + \eta \Gamma_{z_{t-2}}^{z_{t-1}} \F(z_{t-2})  ) + \Gamma_{z_t}^{z_{t-1}} (-\eta \F(z_t) + \Gamma_{z_{t-1}}^{z_{t}}\eta \F(z_{t-1})).
    \end{align}
    Rearranging \eqref{eq:hh3}, we have
    \begin{align*}
        \Gamma_{\hat z_{t}}^{z_{t-1}} \Gamma_{\hat z_{t+1}}^{\hat z_{t}} G_{t+1} - \Gamma_{z_t}^{z_{t-1}} (\eta \F(z_t) ) & = (H_{z_{t-1},p_2}^{\hat z_{t+1}}-Id) (-\eta \F(z_{t-1}) + \eta \Gamma_{z_{t-2}}^{z_{t-1}} \F(z_{t-2})  ) \\
        & \quad- (H_{z_{t-1},p_1}^{\hat z_{t+1}} - Id) (-2\eta \F(z_{t-1}) + \eta \Gamma_{z_{t-2}}^{z_{t-1}} \F(z_{t-2})  ).
    \end{align*}
    By Corollary \ref{lemma:tau}, we have
    \begin{align*}
        \|\Gamma_{\hat z_{t}}^{z_{t-1}} \Gamma_{\hat z_{t+1}}^{\hat z_{t}} G_{t+1} - \eta \Gamma_{z_t}^{z_{t-1}} \F(z_t)  \| &\le K_m d^2(\hat z_{t+1},p_1) d(z_{t-1},\hat z_t) +  K_m d^2(\hat z_{t+1},p_2) d(z_{t-1}, z_t).
    \end{align*}
    Since $p_1$ lies in the geodesic between $z_{t-1}$ and $z_t$, we have $d(\hat z_{t+1},p_1) \le d(\hat z_{t+1},z_t) + d(\hat z_{t},z_{t-1}) \le l_t$. Also, we have $d(\hat z_{t+1},p_2) \le l_t$. Then, we have
    \begin{align*}
        \|\Gamma_{\hat z_{t}}^{z_{t-1}} \Gamma_{\hat z_{t+1}}^{\hat z_{t}} G_{t+1} - \eta \Gamma_{z_t}^{z_{t-1}} \F(z_t) \| &\le  K_m l_t^2(d(z_{t-1},\hat z_t)+d(z_{t-1}, z_t)) \\
        &\le  K_m l_t^3 = K_m 8 \eta^3 \|\F(z_{t-1})\|^3.
    \end{align*}
    Then,
    \begin{align*}
        \|G_{t+1}\|^2 - \|\eta \F(z_t)\|^2 &= \|\Gamma_{\hat z_{t}}^{z_{t-1}} \Gamma_{\hat z_{t+1}}^{\hat z_{t}} G_{t+1} \|^2- \|\eta \Gamma_{z_t}^{z_{t-1}} \F(z_t)  \|^2\\
        &\le  \|\Gamma_{\hat z_{t}}^{z_{t-1}} \Gamma_{\hat z_{t+1}}^{\hat z_{t}} G_{t+1} - \eta \Gamma_{z_t}^{z_{t-1}} \F(z_t)  \|^2 = 64K_m\eta^6 \|\F(z_{t-1})\|^6.
    \end{align*}
    
    Next we examine (ii). It is shown that
    \begin{align*}
        \| \Gamma_{\hat z_{t+1}}^{z_t} G_{t+1}  -\eta \F(z_t)\| &\le \| \Gamma_{\hat z_{t+1}}^{z_t} G_{t+1} -\Gamma_{\hat z_{t+1}}^{z_{t}} \Gamma_{\hat z_t}^{\hat z_{t+1}} \Gamma_{z_{t-1}}^{\hat z_{t}}\Gamma_{z_t}^{z_{t-1}} \eta \F(z_t)\|\\
        & \quad + \| \Gamma_{\hat z_{t+1}}^{z_{t}} \Gamma_{\hat z_t}^{\hat z_{t+1}} \Gamma_{z_{t-1}}^{\hat z_{t}}\Gamma_{z_t}^{z_{t-1}} \eta \F(z_t) - \eta \F(z_t)\|\\
         &= \| \Gamma_{\hat z_{t}}^{z_{t-1}} \Gamma_{\hat z_{t+1}}^{\hat z_{t}} G_{t+1} - \eta \Gamma_{z_t}^{z_{t-1}} \F(z_t)\|\\
        & \quad + \| \Gamma_{\hat z_{t+1}}^{z_{t}} \Gamma_{\hat z_t}^{\hat z_{t+1}} \Gamma_{z_{t-1}}^{\hat z_{t}}\Gamma_{z_t}^{z_{t-1}} \eta \F(z_t) - \eta \F(z_t)\|\\
        &\le  K_m 8 \eta^3 \|\F(z_{t-1})\|^3 + \| \Gamma_{\hat z_{t+1}}^{z_{t}} \Gamma_{\hat z_t}^{\hat z_{t+1}} \Gamma_{z_{t-1}}^{\hat z_{t}}\Gamma_{z_t}^{z_{t-1}} \eta \F(z_t) -\eta \F(z_t)\|.
    \end{align*}
    We turn our focus on the geodesic rectangle $z_t z_{t-1} \hat z_t \hat z_{t+1}$. Denote $\gamma_1(s):[0,1]\to\M$ as the geodesic from $z_{t-1}$ to $\hat z_t$ and $\gamma_1(s):[0,1]\to\M$ as the geodesic from $z_{t}$ to $\hat z_{t+1}$. We define a rectangle map $\Xi:[0,1]\times[0,1] \to \M$ such that
    \begin{align*}
        \Xi(s,t) = \exp_{\gamma_1(s)} (t \expinv{\gamma_1(s)}{\gamma_2(s)}).
    \end{align*}
    The boundary curve of $\Xi$ is the geodesic rectangle $z_t z_{t-1} \hat z_t \hat z_{t+1}$. Denote $S = \Xi_* (\frac{\partial}{\partial s})$ and $T = \Xi_* (\frac{\partial}{\partial t})$. From Lemma \ref{lemma:rotate}, we have
    \begin{align}\label{eq:rnorm}
         \| \Gamma_{\hat z_{t+1}}^{z_{t}} \Gamma_{\hat z_t}^{\hat z_{t+1}} \Gamma_{z_{t-1}}^{\hat z_{t}}\Gamma_{z_t}^{z_{t-1}} \eta \F(z_t) -\eta \F(z_t)\| \le 12K_m \eta \| \F(z_t)\| \int_0^1\int_0^1 \|S\| \|T\| ds dt.
    \end{align}
    Notice that each $t$-curve is a geodesic from $ \gamma_1(s)$ to $\gamma_2(s)$, we have 
    \begin{align}\label{eq:T}
     \nonumber \|T(s,t)\| &= (d(\gamma_1(s),\gamma_2(s)))\\
     \nonumber &\le (d(\gamma_1(s),z_{t-1}) + d(z_{t-1}, z_{t}) + d(\gamma_2(s),z_{t})) \\
     \nonumber &\le (d(\hat z_t,z_{t-1}) + d(z_{t-1}, z_{t}) + d(\hat z_{t+1},z_{t})) \\
               &\le l_t = 2\eta\|\F(z_{t-1})\|.
    \end{align}
    Moreover, the vector field $S$ is a Jacobi field along every $t$-curve with $\|S(s,0)\| = d(\hat z_t,z_{t-1})$ and $\|S(s,1)\| = d(z_t,\hat z_{t+1})$. By Lemma \ref{lemma:rauch}, we have
    \begin{align*}
        \|S(s,t)\| \le \frac{ \mathbf s(\kappa,\|T(s,t)\|)}{ \mathbf S(K, \|T(s,t)\|) } (d(\hat z_t,z_{t-1}) + d(z_t,\hat z_{t+1})).
    \end{align*}
    Since $\eta \le \frac{1}{2\sqrt{K_m}G}$, we have $\|T(s,t)\| \le  2\eta\|\F(z_{t-1})\| \le \frac{1}{\sqrt{K_m}}$, and thus by Lemma \ref{lemma:sinh}
    \begin{align*}
         \|S(s,t)\| &\le \frac{ \mathbf s(\kappa,\|T(s,t)\|)}{ \mathbf S(K, \|T(s,t)\|) } (d(\hat z_t,z_{t-1}) + d(z_t,\hat z_{t+1}))\\
         &\le 3(d(\hat z_t,z_{t-1}) + d(z_t,\hat z_{t+1}))
    \end{align*}
    With $d(\hat z_t,z_{t-1}) \le 4L\eta^2\|\F(z_{t-1})\| $ , $d(z_t,\hat z_{t+1}) \le 8L\eta^2\|\F(z_{t-1})\|  $ and $\eta\le \frac{1}{20L}$, we have
    \begin{align}\label{eq:S}
         \|S(s,t)\| \le \frac{9}{5} \eta \|\F(z_{t-1})\| \le 2 \eta \|\F(z_{t-1})\|
    \end{align}
    Taking \eqref{eq:T} and \eqref{eq:S} in \eqref{eq:rnorm}, we have
    \begin{align*}
         \| \Gamma_{\hat z_{t+1}}^{z_{t}} \Gamma_{\hat z_t}^{\hat z_{t+1}} \Gamma_{z_{t-1}}^{\hat z_{t}}\Gamma_{z_t}^{z_{t-1}} \eta \F(z_t) -\eta \F(z_t)\| &\le 12K_m \eta \| \F(z_t)\| \cdot 4 \eta^2 \|\F(z_{t-1})\|^2 \\
         &\le 12K_m \eta  8 \eta^2 \|\F(z_{t-1})\|^3 = 96K_m \eta^3 \|\F(z_{t-1})\|^3.
    \end{align*}
    Thus, we have
    \begin{align*}
        \| \Gamma_{\hat z_{t+1}}^{z_t} G_{t+1}  -\eta \F(z_t)\| &\le  K_m 8 \eta^3 \|\F(z_{t-1})\|^3 + \| \Gamma_{\hat z_{t+1}}^{z_{t}} \Gamma_{\hat z_t}^{\hat z_{t+1}} \Gamma_{z_{t-1}}^{\hat z_{t}}\Gamma_{z_t}^{z_{t-1}} \eta \F(z_t) -\eta \F(z_t)\|\\
        & \le  K_m 8 \eta^3 \|\F(z_{t-1})\|^3  +  96K_m \eta^3 \|\F(z_{t-1})\|^3\\
        & = 104K_m\eta^3 \|\F(z_{t-1})\|^3,
    \end{align*}
    which completes our proof.\hfill$\blacksquare$
    
\section{Proof of Theorems \ref{thm: last-c} and \ref{thm: last-sc} }\label{app:last}

\paragraph{Proof of Theorem \ref{thm: last-c}} Define $\hat z_{t+1} = \exp_{z_t}(\eta \F(z_{t}) + \Gamma_{z_{t-1}}^{z_t} \F(z_{t-1}))$. We consider the following Lyapunov function
\begin{align*}
    \phi_{t+1} &:=d(\hat z_{t+1},z^*) + \sigma_1 d^2(z_{t+1},\hat z_{t+2})\\
                &= d(\hat z_{t+1},z^*) + \sigma_1 \eta^2 \| \nabla \F(z_{t+1}) - \Gamma_{z_t}^{z_{t+1}} \F(z_t) \|^2.
\end{align*}

The difference between $\phi_{t+2}$ and $\phi_{t+1}$ is 
\begin{align}\label{eq:diff}
    \nonumber\phi_{t+2} - \phi_{t+1}  & = d^2(\hat z_{t+2},z^*) -  d^2(\hat z_{t+1},z^*) + 2 \sigma_1 \langle \eta \F(z_{t+1}), \Gamma_{z_t}^{z_{t+1}} \eta \F(z_t) \rangle\\
       & \quad + \sigma_1 \eta^2 (\|\F(z_{t+2}) \|^2 - \|\F(z_t)\|^2 - 2 \langle \F(z_{t+2}), \Gamma_{z_{t+1}}^{z_{t+2}}\F(z_{t+1}) \rangle .
\end{align}
In the geodesic $\triangle\hat z_{t+1}\hat z_{t+2}z^*$, Lemma \ref{lemma:bdd} and Lemma \ref{lemma:sigma} give
\begin{align}\label{eq:1}
    d^2(\hat z_{t+2},z^*) - d^2(\hat z_{t+1},z^*) \le 2\langle G_{t+2}, \expinv{\hat z_{t+2}}{z^*} \rangle - \sigma_1\|G_{t+2}\|^2,
\end{align}
where $G_{t+2} = \expinv{\hat z_{t+2}}{\hat z_{t+1}}.$ Substituting \eqref{eq:1} into \eqref{eq:diff}, we have
\begin{align*}
    \nonumber\phi_{t+2} - \phi_{t+1}  & = 2\langle G_{t+2}, \expinv{\hat z_{t+2}}{z^*} \rangle - \sigma_1\|G_{t+2}\|^2 + 2 \sigma_1 \langle \eta \F(z_{t+1}), \Gamma_{z_t}^{z_{t+1}} \eta \F(z_t) \rangle\\
    \nonumber           & \quad + \sigma_1 \eta^2 (\|\F(z_{t+2}) \|^2 - \|\F(z_t)\|^2 - 2 \langle \F(z_{t+2}), \Gamma_{z_{t+1}}^{z_{t+2}}\F(z_{t+1})) \rangle. \\
\end{align*}
With a matter of algebraic calculations, we have
\begin{align}\label{eq:final1la}
     \nonumber \phi_{t+2} - \phi_{t+1}  &\le 2\langle G_{t+2}, \expinv{\hat z_{t+2}}{z^*} \rangle + 2\sigma_1 \langle \eta \F(z_{t+1}) , -\eta \F(z_{t+1}) + \eta \Gamma_{z_{t}}^{z_{t+1}} \F(z_{t}) \rangle\\
     \nonumber &\quad + \eta \sigma_1 \big(\|\F(z_{t+2})\|^2 - \|\F(z_{t})\|^2 + \|\F(z_{t+1})\|^2 - 2 \langle \eta \F(z_{t+2}) ,\eta \Gamma_{z_{t+1}}^{z_{t+2}} \F(z_{t+1}) \rangle  \big)\\
    &\quad + \sigma_1 (\|\eta \F(z_{t+1})\|^2 - \|G_{t+2}\|^2).
\end{align}

We define
\begin{align*}
    \begin{cases}
        A:= 2\langle G_{t+2}, \expinv{\hat z_{t+2}}{z^*} \rangle + 2\sigma_1 \langle \eta \F(z_{t+1}) , -\eta \F(z_{t+1}) + \eta \Gamma_{z_{t}}^{z_{t+1}} \F(z_{t}) \rangle;\\
        B:= \eta \sigma_1 \big(\|\F(z_{t+2})\|^2 - \|\F(z_{t})\|^2 + \|\F(z_{t+1})\|^2 - 2 \langle \eta \F(z_{t+2}) ,\eta \Gamma_{z_{t+1}}^{z_{t+2}} \F(z_{t+1}) \rangle  \big);\\
        C:= \sigma_1 (\|\eta \F(z_{t+1})\|^2 - \|G_{t+2}\|^2).
    \end{cases}
\end{align*}
and analyze them term by term.

We rewrite $A$ as
\begin{align}\label{eq:1last}
    \nonumber A = &2\langle \Gamma_{\hat z_{t+2}}^{z_{t+1}} G_{t+2}, \Gamma_{\hat z_{t+2}}^{z_{t+1}}\expinv{\hat z_{t+2}}{z^*} \rangle + 2\sigma_1 \langle \eta \F(z_{t+1}) , -\eta \F(z_{t+1}) + \eta \Gamma_{z_{t}}^{z_{t+1}} \F(z_{t}) \rangle\\
    \nonumber&= 2 \langle \Gamma_{\hat z_{t+2}}^{z_{t+1}}\expinv{\hat z_{t+2}}{z^*} , \Gamma_{\hat z_{t+2}}^{z_{t+1}} G_{t+2} - \eta \F(z_{t+1}) \rangle \\
    \nonumber& \quad + 2 \langle  \Gamma_{\hat z_{t+2}}^{ z_{t+1}}\expinv{\hat z_{t+2}}{z^*} - \expinv{z_{t+1}}{z^*}, \eta \F(z_{t+1}) \rangle\\
     &\quad + 2 \langle \expinv{z_{t+1}}{z^*} ,  \eta \F(z_{t+1}) \rangle +  2\sigma_1 \langle \eta \F(z_{t+1}) , -\eta \F(z_{t+1}) + \eta \Gamma_{z_{t}}^{z_{t+1}} \F(z_{t}) \rangle.
\end{align}

According to Lemma \ref{lemma:H}, there exists a point $p$ in the geodesic between $z_{t+1}$ and $\hat z_{t+2}$ such that
\begin{align}\label{eq:2last}
    \expinv{z_{t+1}}{z^*} - \Gamma_{\hat z_{t+2}}^{z_{t+1}} \expinv{\hat z_{t+2}}{z^*} = H_{z_{t+1},p}^{z^*} (-\eta \F(z_{t+1}) + \eta \Gamma_{z_t}^{z_{t+1}} \F(z_t)).
\end{align}
Combining \eqref{eq:1last} and \eqref{eq:2last}, we have
\begin{align*}
    A &=  2 \langle \Gamma_{\hat z_{t+2}}^{\hat z_{t+1}} G_{t+2} - \eta \F(z_{t+1}), \Gamma_{\hat z_{t+2}}^{\hat z_{t+1}}\expinv{\hat z_{t+2}}{z^*} \rangle \\
    \nonumber& \quad + 2 \langle  \eta \F(z_{t+1}), (-H_{z_{t+1},p}^{z^*} + \sigma_1 Id) [ -\eta \F(z_{t+1})] + \eta \Gamma_{z_{t}}^{z_{t+1}} \F(z_{t})\rangle\\
     &\quad + 2 \langle  \eta \F(z_{t+1}), \expinv{z_{t+1}}{z^*} \rangle.
\end{align*}

Since the eigenvalue of $-H_{z_{t+1},p}^{z^*}$ lies in $[-\zeta_1,-\sigma_1]$, we have
\begin{align*}
    \langle  \eta \F(z_{t+1}), (-H_{z_{t+1},p}^{z^*} + \sigma_1)& -\eta \F(z_{t+1}) + \eta \Gamma_{z_{t}}^{z_{t+1}} \F(z_{t})\rangle \\ &\le (\zeta_1-\sigma_1) \| \eta \F(z_{t+1}) - \eta \Gamma_{z_t}^{z_{t+1}} \F(z_t) \| \| \eta \F(z_{t+1})  \| \\
    & \le (\zeta_1-\sigma_1)L\eta^2 d(z_{t+1},z_t) \|\F(z_{t+1})\|.
\end{align*}
In Lemma \ref{lemma:key},we know that $d(z_{t+1},z_t) \le \eta(1+8L\eta)\|\F(z_t)\| $. Putting them together, we have
\begin{align*}
    \langle  \eta \F(z_{t+1}), (-H_{z_{t+1},p}^{z^*} + \sigma_1)& -\eta \F(z_{t+1}) + \eta \Gamma_{z_{t}}^{z_{t+1}} \F(z_{t})\rangle 
    \\& \le (\zeta_1-\sigma_1)L\eta^3 (1+8L\eta) \|\F(z_t)\|\|\F(z_{t+1})\| \\
    &\le 2(\zeta_1-\sigma_1)L\eta^3 (1+8L\eta) \|\F(z_t)\|^2.
\end{align*}
Moreover, by Lemma \ref{lemma:key}, we have
\begin{align*}
    \langle \Gamma_{\hat z_{t+2}}^{\hat z_{t+1}} G_{t+2} - \eta \F(z_{t+1}), \Gamma_{\hat z_{t+2}}^{\hat z_{t+1}}\expinv{\hat z_{t+2}}{z^*} \rangle &\le d(\hat z_{t+2},z^*) 104 \eta^3 \|\F(z_t)\|^3\\
    &\le (D+4L\eta^2\|\F(z_t)\|)104K_m  \eta^3 \|\F(z_t)\|^3.
\end{align*}
Hence, we have
\begin{align*}
    A &\le  2(D+4L\eta^2\|\F(z_t)\|)104 \eta^3 \|\F(z_t)\|^3+2(\zeta_1-\sigma_1)L\eta^3(1+8L\eta)2 \|\F(z_t)\|^2\\
      & \quad  + 2 \langle  \eta \F(z_{t+1}), \expinv{z_{t+1}}{z^*} \rangle.
\end{align*}
Since $\eta \le \frac{1}{2G}$ and $\eta\le\frac{1}{20L}$, we can find
\begin{align}\label{eq:A}
   A \le 104 K_m (2D+\frac{1}{5}) \eta^3 \|\F(z_t)\|^3+\frac{28}{5}(\zeta_1-\sigma_1)L\eta^3\|\F(z_t)\|^2 + 2 \langle  \eta \F(z_{t+1}), \expinv{z_{t+1}}{z^*} \rangle.
\end{align}

Next we analyze $B$ and $C$ as follows.
\begin{align}
    \nonumber B &=  \eta^2 \sigma_1 (\| F{z_{t+2}} - \Gamma_{t+1}^{t+2} \F(z_{t+1}) \|^2 - \|\F(z_t)\|^2)\\
    \nonumber & \le \eta^2\sigma_1 (L^2d^2(z_{t+2},z_{t+1})- \|\F(z_t)\|^2)\\
    \nonumber & \le \eta^2\sigma_1(4L^2\eta^2\|\F(z_t)\|^2- \|\F(z_t)\|^2)\\
   \label{eq:B}  & \le \eta^2\sigma_1(\frac{1}{5}L\eta\|\F(z_t)\|^2- \|\F(z_t)\|^2), \\
\label{eq:C} C & \le \sigma_1 64 \eta^6 \|\F(z_t)\|^6 K_m \le 8 \sigma_1  K_m \eta^3 \|\F(z_t)\|^3.
\end{align}

Finally, taking \eqref{eq:A}, \eqref{eq:B} and \eqref{eq:C} in \eqref{eq:final1la}, we have
\begin{align}\label{eq:final2la}
    \nonumber &\phi_{t+2} -\phi_{t+1} \le\\
    \nonumber &\quad\eta^2\|\F(z_t)\|^2 \Big( 104 K_m (2D+\frac{1}{5}) \eta \|\F(z_t)\| +  \frac{28}{5}(\zeta_1-\sigma_1)L\eta +\sigma_1  K_m \eta \|\F(z_t)\|+  \frac{\sigma_1L}{5}\eta - \sigma_1 \Big) \\ 
    &\quad + 2 \langle  \eta \F(z_{t+1}), \expinv{z_{t+1}}{z^*} \rangle.
\end{align}
Because $f$ is g-convex-concave, we have
\begin{align*}
     2 \langle  \eta \F(z_{t+1}), \expinv{z_{t+1}}{z^*} \rangle \le 0,
\end{align*}
By g-$G$-Lipschitz, we have
\begin{align*}
    \phi_{t+2} -\phi_{t+1} \le \eta^2\|\F(z_t)\|^2 \Big( (104 K_m (2D+\frac{1}{5}) G +  \frac{28}{5}(\zeta_1-\sigma_1)L +\sigma_1  K_m G+  \frac{\sigma_1L}{5})\eta - \sigma_1 \Big).
\end{align*}
Since 
\begin{align*}
    \eta \le \frac{\sigma_1}{2(104 K_m (2D+\frac{1}{5})  G +  \frac{28}{5}(\zeta_1-\sigma_1)L +\sigma_1  K_m G+  \frac{\sigma_1L}{5})},
\end{align*}
we have
\begin{align}\label{eq:final-c}
    \phi_{t+2} -\phi_{t+1} \le -\frac{\sigma_1}{2} \eta^2 \| \F(z_t) \|^2.
\end{align}
Summing \eqref{eq:final-c} from $t=0,1,2\dots$ yields
\begin{align*}
    \sum_{t=1}^\infty \|\F(z_t)\|^2 \le  \frac{2}{\sigma_1 \eta^2}\phi_1 \le \frac{2 D^2}{\sigma_1 \eta^2},
\end{align*}
which immediately indicates that
\begin{align*}
     \min_{t\le T} \| \nabla \f(z_t)\| \le \frac{2D}{\eta\sqrt{\sigma_1T}},
\end{align*}
and 
\begin{align*}
    \lim_{t\to\infty}  \| \nabla \f(z_t)\| = 0. 
\end{align*} 

In this way, we have completed our proof. $\hfill \blacksquare$

\paragraph{Proof of Theorem \ref{thm: last-c}} 
The g-strongly convexity-strongly concavity implies
\begin{align*}
    2 \langle  \eta \F(z_{t+1}), \expinv{z_{t+1}}{z^*} \rangle \le -\mu \eta d^2(z_{t+1},z^*).
\end{align*}
By Young's inequality, we have
\begin{align}\label{eq:scfinal1}
   \nonumber 2 \langle  \eta \F(z_{t+1}), \expinv{z_{t+1}}{z^*} \rangle &\le -\mu \eta d^2(z_{t+1},z^*)\\
    & \le  \mu \eta d^2(z_{t+1},\hat z_{t+2}) - \frac{\mu \eta}{2} d^2(\hat z_{t+2},z^*).
\end{align}
Putting \eqref{eq:scfinal1} into \eqref{eq:final2la} and $\Upsilon = 104 K_m (2D+\frac{1}{5})  G +  \frac{28}{5}(\zeta_1-\sigma_1)L +\sigma_1  K_m G+  \frac{\sigma_1L}{5}$, we write
\begin{align}\label{eq:sc1}
    \nonumber\phi_{t+2} - \phi_{t+1} \le & \eta^2\|\F(z_t)\|^2 (\Upsilon \eta - \sigma_1) + \mu\eta d^2(z_{t+1},\hat z_{t+2}) - \frac{\mu \eta}{2} d^2(\hat z_{t+2},z^*)\\
    \nonumber& = \eta^2\|\F(z_t)\|^2 (\Upsilon \eta - \sigma_1) + \mu\eta d^2(z_{t+1},\hat z_{t+2}) - \frac{\mu \eta}{2} \phi_{t+2} \\
    &\quad + \frac{\mu \eta}{2} \sigma_1 \eta^2 \| \F(z_{t+2}) - \Gamma_{z_{t+1}}^{z_{t+2}} \F(z_{t+1})\|^2 .
\end{align}
From the proof Lemma \ref{lemma:key}, we have 
\begin{align}\label{eq:sc2}
    \begin{cases}
        d(z_{t+1},\hat z_{t+2}) \le 2 \eta \|\F(z_t)\|\\
        \eta\| \F(z_{t+2}) - \Gamma_{z_{t+1}}^{z_{t+2}} \F(z_{t+1})\| = d(z_{t+2},\hat z_{t+3}) \le 2 \eta \|\F(z_{t+1})\|\le 4 \eta \|\F(z_t)\|.
    \end{cases}
\end{align}
Taking \eqref{eq:sc2} in \eqref{eq:sc1}, we have
\begin{align*}
    (1+\frac{\mu\eta}{2})\phi_{t+2} - \phi_{t+1} \le \eta^2\|\F(z_t)\|^2 (\Upsilon \eta + 4\mu\eta + 8\sigma_1\eta - \sigma_1).
\end{align*}
Since 
\begin{align*}
    \eta \le \frac{\sigma_1}{\Upsilon+4\mu+8\sigma\mu},
\end{align*}
we have
\begin{align}\label{eq:final-sc}
    \phi_{t+2}  \le \frac{1}{1+\mu\eta/2}\phi_{t+1},
\end{align}

which give us
\begin{align*}
    \begin{cases}
        \sigma_1 d^2(z_{t},\hat z_{t+1}) \le \phi_t \le (\frac{1}{1+\mu\eta/2})^t \phi_1\\
        d^2(\hat z_{t+1},z^*) \le \phi_{t+1}  \le (\frac{1}{1+\mu\eta/2})^{t+1} \phi_1.
    \end{cases}
\end{align*}
Finally, we get
\begin{align*}
    d^2(z_t,z^*) &\le 2 (d^2(z_{t},\hat z_{t+1}) + d^2(\hat z_{t+1},z^*))\\
    &\le 2 (1+\frac{1}{\sigma_1}) (\frac{1}{1+\mu\eta/2})^t \phi_1\\
    & =   2 (1+\frac{1}{\sigma_1}) (\frac{1}{1+\mu\eta/2})^t d^2(z_1,z^*),
\end{align*}
which completes our proof.

\clearpage

\vskip 0.2in
\bibliographystyle{plainnat}

\end{document}